\documentclass[reqno,12pt]{amsart}
\usepackage[margin=1in,footskip=0.5in]{geometry}

\usepackage{enumitem}
\newcommand{\FF}{F_0}
\usepackage[latin1]{inputenc}
\usepackage[english]{babel}
\usepackage{amsaddr}
\usepackage{subfigure}
\usepackage{amsmath,amssymb,amsthm}
\usepackage{ae}
\usepackage[T1]{fontenc}
\usepackage{graphicx}
\usepackage{epstopdf}
\usepackage{color}

\newcommand{\e}{{\mathrm e}}

\newcommand{\be}{\begin{eqnarray}}
\newcommand{\ee}{\end{eqnarray}}
\newcommand{\ba}{\begin{align}}
\newcommand{\ea}{\end{align}}
\newcommand{\bi}{\begin{itemize}}
\newcommand{\ei}{\end{itemize}}
\usepackage{fullpage}

\newcommand{\secref}[1]{Section~\ref{sec:#1}}

\newcommand{\seclab}[1]{\label{sec:#1}}
\newcommand{\eqlab}[1]{\label{eq:#1}}
\renewcommand{\eqref}[1]{(\ref{eq:#1})}

\newcommand{\figref}[1]{Fig.~\ref{fig:#1}}
\newcommand{\figlab}[1]{\label{fig:#1}}
\newcommand{\propref}[1]{Proposition~\ref{proposition:#1}}
\newcommand{\proplab}[1]{\label{proposition:#1}}
\newcommand{\lemmaref}[1]{Lemma~\ref{lemma:#1}}
\newcommand{\lemmalab}[1]{\label{lemma:#1}}

\newcommand{\remlab}[1]{\label{remark:#1}}
\newcommand{\thmref}[1]{Theorem~\ref{theorem:#1}}
\newcommand{\thmlab}[1]{\label{theorem:#1}}

\newcommand{\appref}[1]{Appendix~\ref{app:#1}}

\newcommand{\applab}[1]{\label{app:#1}}

\newtheorem{theorem}{Theorem}[section]
\newtheorem{proposition}[theorem]{Proposition}

\newtheorem{lemma}[theorem]{Lemma}

\newtheorem{remark}[theorem]{Remark}

\numberwithin{equation}{section}
\newcommand\rspp[1]{{\color{black}{#1}}}

\title{A geometric approach to exponentially small splitting: The generic zero-Hopf bifurcation of co-dimension two}
\author{K. Uldall Kristiansen}
\address{Department of Applied Mathematics and Computer Science,
Technical University of Denmark,
2800 Kgs. Lyngby,
Denmark }
\begin{document}
 \begin{abstract}
 In this paper, we consider the unfolding of the real-analytic and generic zero-Hopf bifurcation of co-dimension two. It is well-known that in an open set of parameter space the splitting of one-dimensional stable and unstable manifolds is beyond all orders. This paper provides a new geometric dynamical-systems-oriented proof for the exponentially small splitting. As a novel aspect, we relate the exponentially small splitting to the lack of analyticity of center-like manifolds of generalized saddle-nodes. Moreover, the blowup method plays an important technical role as a systematic way to relate dynamics on different orders of magnitude.  Finally, as our approach takes place in the (complexified) phase space, we do not rely on an explicit time-parametrization of the unperturbed heteroclinic connection. We therefore believe that our approach has general interest.

\noindent \textbf{Keywords.} Zero-Hopf bifurcation, blowup, exponentially small splitting, GSPT, generalized saddle-nodes.

\noindent \textbf{Mathematics Subject Classification.} 34C23,  34D15, 34C45,  37G10

 \end{abstract}
 \bigskip
\smallskip

\maketitle

\tableofcontents
\section{Introduction}
In this paper, we revisit the exponentially small splitting in the unfolding of the real-analytic zero-Hopf bifurcation. We recall that the zero-Hopf bifurcation \cite{Guckenheimer97} is defined for a vector-field $\Xi_0$ in $\mathbb R^3$ as a singularity with eigenvalues  $0$, $\pm i\omega$, $\omega \ne 0$, of the linearization. We will consider the generic case where the zero-Hopf bifurcation is co-dimension two \cite{MR779710}, using $(\mu,\nu)\in \mathbb R^2$, $(\mu,\nu)\sim (0,0)$, to denote the unfolding parameters. In particular, by \cite[Eq. (6)]{baldom2013a} we have the following normal form for the unfolding $\Xi_{\mu,\nu}$ of $\Xi_0$:
\begin{equation}\eqlab{model0}
\begin{aligned}
 x' &=x^2-\mu+a (y^2 +z^2) + F(x,y,z,\mu,\nu),\\
 y'&= (\nu-b x ) y +  z+G(x,y,z,\mu,\nu),\\
 z'&= (\nu-b x) z -  y+H(x,y,z,\mu,\nu),
\end{aligned}
\end{equation}
with $(\cdot )' = \frac{d}{d\tau}$.
Here $F,G$ and $H$ are real-analytic functions, defined in a neighborhood of $(x,y,z,\mu,\nu)=(0,0,0,0,0)$ in $\mathbb C^5$, and each function is third order:
\begin{align}\eqlab{Wthird}
 W(x,y,z,\mu,\nu)  = \mathcal O(\vert (x,y,z,\mu,\nu)\vert^3),\quad W=F,G,H,
\end{align}
with respect to $(x,y,z,\mu,\nu)\to (0,0,0,0,0)$. 
For the derivation of \eqref{model0}, we have used a re-parametrization of time that brings $\omega=1$, see \appref{model} for details.
Here we also show that it is without loss of generality to assume that
\begin{align}
 F(x,0,0,\mu,\nu) = \FF x^3+\mathcal O(\vert (x,\mu,\nu)\vert^4), \quad \FF\in \mathbb R,\eqlab{Fexpansion}
\end{align}
with respect to $(x,\mu,\nu)\to (0,0,0)$.
%
%
%
We will as in \cite{baldom2013a} suppose that
\begin{align*}
 b>0.
\end{align*}
Then we have the following:  Fix any $\delta\in (0,1)$ and let $\mathcal W$ denote the region of parameter space $(\mu,\nu)\in \mathbb R^2$, defined by
\begin{align}\eqlab{mathcalW}
 \begin{cases}
  \mu = \epsilon^2,\\
  \nu = \epsilon b \sigma,
 \end{cases}\quad \sigma \in [-1+\delta,1-\delta],\quad 0<\epsilon<\epsilon_0,
\end{align}
with $0<\epsilon_0=\epsilon_0(\delta)\ll 1$, see \figref{mathcalW}.
Then for any $(\mu,\nu)\in \mathcal W$, the system \eqref{model0} has
two saddle-foci $E^{\pm} (\mu,\nu)$  with $x$-components given by the form $\pm \epsilon (1+\mathcal O(\epsilon))$, respectively. For further details, see \cite{baldom2013a} or \secref{unperturbed} below. There is a one-dimensional heteroclinic connection for $\epsilon=0$, as well as a formal one in powers of $\epsilon$. The splitting of the one-dimensional unstable and stable manifolds of $E^{\pm}$ is therefore a beyond-all-orders phenomena with respect to $\epsilon\rightarrow 0$. We are as in \cite{baldom2013a} concerned with an expression for their splitting for all $0<\epsilon\ll 1$.

 \begin{figure}[h!]
\begin{center}
{\includegraphics[width=.5\textwidth]{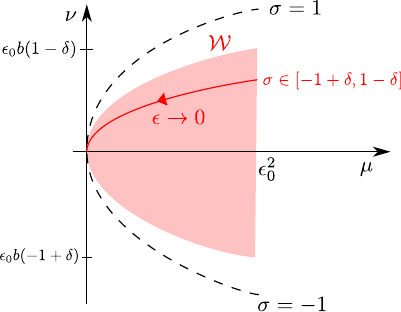}}
\end{center}
\caption{Illustration of the region $\mathcal W\subset \mathbb R^2$ in the parameter $(\mu,\nu)$-plane. For any $(\mu,\nu)\in \mathcal W$, the system \eqref{model0} has
two saddle-foci $E^{\pm} (\mu,\nu)$. The splitting of the associated unstable and stable manifolds is beyond all orders as $\epsilon\to 0$ (with $\sigma\in (-1,1)$ fixed), see \eqref{mathcalW}.}
\figlab{mathcalW}
\end{figure}
\begin{remark}
For $a>0$, there is a related splitting problem associated with the remaining two-dimensional invariant manifolds (carrying focus-type dynamics) of $E^{\pm}(\mu,\nu)$, which is important for the existence of Shilnikov bifurcations, see \cite{baldom2019a} as well as \cite{MR779710} for Shilnikov bifurcations in the $C^\infty$-context. However, in this paper, we will only focus on the one-dimensional splitting problem, and will therefore not make any assumptions on the parameter $a$. For details on $(\mu,\nu,b)\notin \mathcal W\times \mathbb R_+$, we refer to \cite{MR779710,Guckenheimer97}.
\end{remark}

We now describe the result of \cite{baldom2013a} in further details. For this, consider
\eqref{model0} with $b>0$ and $(\mu,\nu)\in \mathcal W$ given by \eqref{mathcalW}:
\begin{equation}\eqlab{model}
\begin{aligned}
 x' &=x^2-\epsilon^2+a (y^2 +z^2) + F(x,y,z,\epsilon),\\
 y'&= b(\sigma \epsilon -x ) y +  z+G(x,y,z,\epsilon),\\
 z'&= b(\sigma \epsilon- x) z -  y+H(x,y,z,\epsilon),
\end{aligned}
\end{equation}
where we from now on use $W(x,y,z,\epsilon)$ to denote $W(x,y,z,\epsilon^2 ,\epsilon b\sigma)$ for $W=F,G,H$. By assumption, we have
\begin{align}
 W(x,y,z,\epsilon) =  \mathcal O(\vert (x,y,z,\epsilon)\vert^3),\quad W=F,G,H.\eqlab{R0}
\end{align}
Now, with  $b>0$ and $\sigma \in (-1,1)$ fixed, the main result of \cite{baldom2013a} can then be summarized as an asymptotic expression for the distance $d(\epsilon)$, $0<\epsilon\ll 1$, between the one-dimensional unstable and stable manifolds of $E^{\pm}(\epsilon):=E^{\pm}(\epsilon^2,\epsilon b\sigma )$, respectively, at the fixed section $\{x=0\}$:
\begin{align}
d(\epsilon) = \epsilon^{-b} \mathrm{e}^{-\frac{\pi}{2}\epsilon^{-1}}\left(C_0+\mathcal O(1/\log \epsilon^{-1})\right),\eqlab{dist}
\end{align}
as $\epsilon\to 0$, for some constant $C_0\ge 0$ independent of $\epsilon$ and $\sigma$. Notice when comparing with \cite{baldom2013a} that our normal form corresponds to $\alpha_0=1$, $\alpha_1=\alpha_3=0$, $h_0=\FF$ and that our $C_0$ is therefore related to their $C^*$ by
\begin{align*}
C_0 = \e^{\frac{\pi}{2} \FF}C^*.
\end{align*}
Generically, $C_0\ne 0$ and \eqref{dist} therefore determines an asymptotic formula for the leading order splitting.

The proof of the expression \eqref{dist} in \cite{baldom2013a} follows a well-known approach (initiated by Lazutkin for maps, see \cite{lazutkin2005a}):
First, the authors scale the variables. \rspp{In the present paper, we will use the following scaling
\begin{align}\label{eq:buhopf}
\begin{cases} x=\epsilon x_2, \\ y=\epsilon^2 y_2, \\ z=\epsilon^2 z_2,
\end{cases}
 \end{align}
 rather than the scaling used in \cite{baldom2013a}
 \begin{align}\nonumber
\begin{cases} x=\epsilon x_2, \\ y=\epsilon y_2, \\ z=\epsilon z_2,
\end{cases}
 \end{align}
 but this difference is not essential. The transformation defined by} \eqref{buhopf}  brings \eqref{model} into the following slow-fast form
 \begin{equation}\label{eq:zeroHopf}
\begin{aligned}
 \dot x_2 &=x_2^2-1+\epsilon^2 a (y_2^2+z_2^2)+\epsilon F_2(x_2,\epsilon y_2,\epsilon z_2,\epsilon),\\
 \epsilon \dot y_2&= z_2+\epsilon b(-x_2+\sigma) y_2+\epsilon G_2(x_2,\epsilon y_2,\epsilon z_2,\epsilon),\\
 \epsilon \dot z_2&= -y_2+\epsilon b(-x_2+ \sigma) z_2 +\epsilon H_2(x_2,\epsilon y_2,\epsilon z_2,\epsilon),
\end{aligned}
\end{equation}
where $\frac{d}{dt}=\dot{(\cdot)}=\epsilon^{-1} (\cdot )'$
and
\begin{align*}
 W_2(x_2,\epsilon y_2,\epsilon z_2,\epsilon):=\epsilon^{-3} W(\epsilon x_2,\epsilon (\epsilon y_2),\epsilon (\epsilon z_2),\epsilon),\quad W=F,G,H.
\end{align*}
Here $F_2,G_2,H_2$ all have real-analytic extensions to $\epsilon=0$ by \eqref{R0}. For $\epsilon=0$, we obtain $\dot x_2=x_2^2-1$, $y_2=z_2=0$, having a heteroclinic connection given by
\begin{align}\eqlab{unperturbedsol}
x_2=-\tanh(t)\to \mp 1 \mbox{ for $t\to \pm\infty $, respectively}.
\end{align} The idea of \cite{baldom2013a} is then to parameterize the unstable and stable manifolds of $E^{\pm}(\epsilon)$ in terms of $t\in \mathbb C$ up close to the poles of \eqref{unperturbedsol} at $t=\pm i\frac{\pi}{2}$. In particular, upon setting
\begin{align}\eqlab{t2defn}
t=\pm i\frac{\pi}{2}+\epsilon t_{2}
\end{align}
with $t_2=\mathcal O(1)$, a so-called inner problem is defined for $\epsilon\to 0$, and it is shown that the unstable and stable manifolds lie close to special invariant manifold solutions of this inner problem. In this way, given that the distance between the invariant manifolds is known near the poles (with $t_2$ fixed, see \eqref{t2defn}), the desired separation of the unstable and stable manifolds at $t=0$ (corresponding to $x=0$ by \eqref{unperturbedsol}) is obtained through the solution of a certain linear boundary value problem (obviously up to many technical details; see also \cite[pp. 673--674]{MR4892796} for a related summary of the approach).

The constant $C_0$ in \eqref{dist} (known as a Stokes constant) is essentially a measure of the separation of the invariant manifold solutions of the inner problem.

The approach in \cite{baldom2013a}, which is more functional-analytic in nature than dynamical-systems-oriented, has been successful in many important problems with exponentially small splitting, see e.g. \cite{MR4455359,MR4621957,MR4940205,MR4743478,gaivao2011a,MR4892796} and references therein. However, the point of departure (also within the context of maps \cite{gelfreich1999a,lazutkin2005a}) is always a parametrization of the unperturbed solution and its singularities in the complex plane, which is not only a strong assumption in a general context, but also an unusual one from the perspective of dynamical systems theory. 

\subsection{Main results}
In this paper, we provide an alternative method for the asymptotic splitting of the one-dimensional invariant manifolds. In particular, our approach does not use complex time and the explicit solution of the $\epsilon=0$ limit, but instead we work exclusively in the complexified phase space $(x,y,z)\in \mathbb C^3$, parametrizing our invariant manifolds as graphs over $x$. Our approach is therefore more in tune with dynamical systems theory.  It is our anticipation that the method will be useful in problems where explicit time parametrizations and their singularities are unknown.

At the same time, we believe that there are other advantages of our approach. For example, we directly relate $C_0\ne 0$ with the lack of analyticity of certain \textit{unperturbed invariant manifolds} of \eqref{model}$\vert_{\epsilon=0}$, see \lemmaref{m0pm}. These manifolds are graphs $(y,z)=\psi_0^{\pm}(x)$, $\psi_0^{\pm}(0)=(0,0)$, over local complex sectors $x\in S^{\pm} \subset \mathbb C$ centered along the positive (negative, respectively) real axis and with openings that are greater than $\pi$. These unperturbed manifolds correspond to the invariant manifolds of the inner problem of \cite{baldom2013a} (upon a change of coordinates defined by $x=t_2^{-1}$, see \eqref{t2defn}). Moreover, we improve the remainder $\mathcal O(1/\log \epsilon)$ in \eqref{dist}, which is not part of the problem but a result of the method of \cite{baldom2013a}. We also believe that our geometric approach for the difference has general interest; see further discussion of our approach in \secref{summary} and \secref{discussion} below.

We summarize our main results as follows (for further details, we refer to the main text below):
\begin{theorem}\thmlab{main}
    Consider \eqref{model}, with $F$, $G$ and $H$ real-analytic on a neighborhood of $(x,y,z,\epsilon)=(0,0,0,0)$,  satisfying \eqref{Wthird}, \eqref{Fexpansion}, and
     \begin{align}
\mbox{$b>0$ \, and \, $-1<\sigma <1$},\eqlab{bass}
\end{align}
    both fixed, for  $0<\epsilon\ll 1$. Suppose that the unperturbed invariant manifolds $(y,z)=\psi_0^{\pm}(x)$, $x\in S^{\pm}$, $\psi_0^{\pm}(0)=(0,0)$, of \eqref{model}$\vert_{\epsilon=0}$ are non-analytic at the origin. Then there is an $\epsilon_0>0$ small enough, such that the following holds:

Let $m^{\pm} (0,\epsilon)\in \mathbb R^2$ denote the $(y,z)$-components of the first intersection of $\mathbf{W}_{\mathrm{loc}}^u(E^+(\epsilon))$ and $\mathbf{W}_{\mathrm{loc}}^s(E^-(\epsilon))$ with $\{x=0\}$, respectively. Then $m^{\pm}(0,\epsilon)$ are well-defined for all $\epsilon \in (0,\epsilon_0)$ and
the splitting is determined by
    \begin{align}\eqlab{asymp}
    m^+(0,\epsilon)-m^-(0,\epsilon) = \epsilon^{-b} \e^{-\frac{\pi}{2}\epsilon^{-1}}
    \operatorname{Re} \begin{pmatrix}
                                       \e^{i\FF \log \epsilon^{-1}+\phi_0(\epsilon) +\epsilon \phi_1(\epsilon)} \\
                                       i\e^{i\FF \log \epsilon^{-1} +\phi_0(\epsilon)}
                                      \end{pmatrix},
    \end{align}
    where $\phi_\alpha:[0,\epsilon_0)\rightarrow \mathbb C$, $\alpha\in \{0,1\}$, are $C^\infty$-smooth functions. In particular, $m^+(0,\epsilon)\ne m^-(0,\epsilon)$ for all $\epsilon\in (0,\epsilon_0)$.
\end{theorem}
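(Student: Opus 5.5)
We would prove \thmref{main} by working entirely in the complexified phase space, writing the invariant manifolds as graphs $(y,z)=m^{\pm}(x,\epsilon)$ over $x$. The first step is to diagonalize the fast rotation by introducing $w=y+iz$, so that $\bar w$ is treated as an independent complex variable. Near the origin, the linear part of the $w$-equation is then $-iw$ plus the slow terms $b(\sigma\epsilon-x)w$.

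The unstable manifold of $E^+$ and the stable manifold of $E^-$ are one-dimensional. They are therefore solutions of the invariance PDE
\begin{align*}
\bigl(x^2-\epsilon^2+\cdots\bigr)\,\partial_x m = \text{RHS}(x,m,\epsilon),
\end{align*}
viewed as a singular ODE in $x$. The point $x=\pm\epsilon$ is a regular singular point, and for $\epsilon=0$ the origin becomes an irregular (saddle-node type) singularity.

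\textbf{Outer region.} In the outer region $|x|\gg\epsilon$ we would extend $m^{\pm}$ as analytic graphs over complex domains. For $m^+$ the domain starts at $x=\epsilon$ and sweeps around through the upper and lower half-planes. A fixed-point argument along complex paths is needed, chosen so that $\operatorname{Re}\int i\,dx/(x^2-\epsilon^2)$ is monotone. In this way $m^{\pm}$ are brought up to $|x|\sim c\epsilon$ near the points $x=\pm i\epsilon$ on the imaginary axis. This is where the singularities $x=\pm i\epsilon$ (equivalently the poles $t=\pm i\pi/2$) appear in phase space, with no time parametrization used.

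\textbf{Inner region.} Near $x=\pm i\epsilon$ we blow up using $x=\epsilon x_2$, $(y,z)=\epsilon^2(y_2,z_2)$, together with a directional chart around $x_2=\pm i$. The goal is to show that $m^{\pm}$ are $\mathcal O(\epsilon^N)$-close to the unperturbed manifolds $\psi_0^{\pm}$ of \lemmaref{m0pm}, expressed in the inner variable $x/\epsilon$. Since the sectors $S^\pm$ have opening greater than $\pi$, they overlap in both half-planes. The difference $\psi_0^+-\psi_0^-$ on the overlap solves the linearized equation $x^2\partial_x\Delta = -i\Delta+\cdots$. It therefore takes the form $\Delta\approx \Theta\, x^{\kappa}\e^{i/x}$ with $\kappa$ determined by $b$ and $F_0$, and with a Stokes constant $\Theta\neq 0$ precisely because $\psi_0^\pm$ is non-analytic at the origin: if $\Theta=0$, the two sectorial solutions would glue to an analytic one.

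\textbf{Propagation to $x=0$.} We then write $\Delta m=m^+-m^-$, which satisfies a linear equation with small nonlinearity. Its fundamental solution, obtained from $\e^{\int (-i+b(\sigma\epsilon-x))/(x^2-\epsilon^2)dx}$, is propagated from $x=i\epsilon c$ to $x=0$ along the imaginary axis. The factor $\exp(-i\int dx/(x^2-\epsilon^2))$ evaluated between $x\approx i\epsilon\cdot\infty$ and $0$ produces $\e^{-\pi/(2\epsilon)}$. The factor $b x/(x^2-\epsilon^2)$ gives $\epsilon^{-b}$ from the logarithm, and the $F_0x^3$ term produces the phase $\e^{iF_0\log\epsilon^{-1}}$. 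Combining the contributions from $x=\pm i\epsilon$, which are conjugate by real-analyticity, yields the $\operatorname{Re}$ in \eqref{asymp}. The correction terms are absorbed into $\phi_0,\phi_1$, and their $C^\infty$ smoothness follows from the smooth dependence on $\epsilon$ of the blown-up charts. Finally, $\Theta\ne0$ gives $\phi_0$ finite, and hence $m^+(0,\epsilon)\neq m^-(0,\epsilon)$.

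\textbf{Main obstacle.} We expect the matching step to be hardest. It requires establishing that, in the blown-up inner chart, $m^\pm$ are close to $\psi_0^\pm$ uniformly on domains large enough to contain both the overlap where $\Delta$ is measured and the region connecting to the outer propagation. The error must also be smaller than the exponentially small $\Delta$ itself once propagated. The plan is to handle this with careful weighted-norm fixed-point estimates in the blowup charts, tracking the exponential weights $\e^{i/x}$ rather than absolute sizes.
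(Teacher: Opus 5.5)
\textbf{The gap is where you put the matching region.} You identify $x=\pm i\epsilon$ with the complex-time poles. You then plan to read off the Stokes-type separation there, by showing that $m^\pm$ are close to $\psi_0^\pm$ in the variable $x_2=x/\epsilon$ near $x_2=\pm i$. This identification is wrong, and in phase space your ``outer'' and ``inner'' regions are interchanged.

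With $x_2=-\tanh t$, the points $x_2=\pm i$ correspond to $t=\mp i\pi/4$, not to the poles. The poles $t=\pm i\pi/2$ correspond to $x_2=\infty$. The inner variable $t=\pm i\frac{\pi}{2}+\epsilon t_2$ gives $|x|\simeq|t_2|^{-1}$, i.e.\ $x=\mathcal O(1)$.

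Nothing is singular at $x=\pm i\epsilon$. The points $x_2=\pm i$ lie in $\mathcal X_2^+\cap\mathcal X_2^-$, where $m_2^+$ and $m_2^-$ have the same asymptotic expansion in $\epsilon$ (\propref{unstable2}). So $m^+-m^-$ is already exponentially small there; propagating the result back gives order $\e^{-\pi/(4\epsilon)}$ up to algebraic factors. By contrast, $\psi_0^+-\psi_0^-$ is a subdominant solution $\sim x^{\kappa}\e^{\mp i/x}$ on the overlaps, so it is of order $\e^{-1/\epsilon}$ at $x=\pm i\epsilon$. The exponents do not agree.

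Moreover, $\psi_0^\pm$ approximates $m^\pm$ only when $\epsilon_1=\epsilon/x$ is small, with error $\mathcal O(\epsilon_1)$ in the $\breve x=1$-chart. At $|x|\sim c\epsilon$ it is not an approximation at all. So no estimate, weighted or not, can transfer the Stokes constant of $\psi_0^\pm$ to $m^+-m^-$ there. Your propagation step shows the same inconsistency: starting from $x=i\epsilon c$ with $c$ fixed, the integral gives $\e^{-\epsilon^{-1}\arctan c}$. The constant $\frac{\pi}{2}$ appears only if the starting point satisfies $|x|/\epsilon\to\infty$.

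\textbf{The missing idea is to match at $|x|=\delta$ fixed,} i.e.\ on the equator $\breve\epsilon=0$ of the blowup, which is $\epsilon_1\to0$ in the $\breve x=1$-chart. The paper extends $m^+$ from $|x|=\xi^{-1}\epsilon$ to $|x|=\delta$ along elliptic paths. It first removes the two formal series up to order $(r_1\epsilon_1)^N$, so that the $\mathcal O(\epsilon^{-2-b})$ growth of the linear flow over times $\mathcal O(\epsilon^{-1})$ is harmless. This yields $m_1^+(r_1,0)=\psi^+_{1,0}(r_1)$ (\propref{prop3}). Hence $m^+(i\delta,\epsilon)-m^-(i\delta,\epsilon)\to\psi_0^+(i\delta)-\psi_0^-(i\delta)\neq0$, an $\mathcal O(1)$ separation. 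No exponentially accurate matching is needed, so your ``main obstacle'' is an artifact of the misplaced region.

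The whole factor $\epsilon^{-b}\e^{-\frac{\pi}{2}\epsilon^{-1}}\e^{iF_0\log\epsilon^{-1}}$ then comes from integrating the equation for $\Delta m$, which is exactly linear by the mean value theorem, over $v\in[0,\delta]$. There $\epsilon^{-1}\arctan(\delta/\epsilon)=\frac{\pi}{2}\epsilon^{-1}-\delta^{-1}+\mathcal O(\epsilon^2)$, and the resulting $\e^{1/\delta}$ absorbs the $\e^{-1/\delta}$ of the Stokes difference.

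Even in this step, a scalar fundamental solution is not enough. Along $x\in i\mathbb R$ the $2\times2$ system has one growing and one decaying direction, so you need a diagonalization that is uniform on $v\in[-\delta,\delta]$, through $v\sim\epsilon$. The paper builds it from slow and center manifolds of the Riccati equation for $\Delta y/\Delta z$, glued across a blowup of $(v,\epsilon)=(0,0)$ (\propref{diag}). You also need the argument that the separation at $x=i\delta$ has a nonzero component along the direction that decays towards $v=0$.
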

\begin{remark}
Through elementary calculations based on \eqref{asymp}, we find that the distance $d(\epsilon):=\vert m^+(0,\epsilon)-m^-(0,\epsilon)\vert$ takes the following form
 \begin{align*}
 d(\epsilon) =\epsilon^{-b} \e^{-\frac{\pi}{2}\epsilon^{-1}} \left(C_0+\mathcal O(\epsilon)\right),\quad C_0:=\e^{\operatorname{Re}(\phi_0(0))}>0.
 \end{align*}
  Here $\mathcal O(\epsilon)$ is a $C^\infty$-smooth function of $(\cos(\FF \log \epsilon^{-1}),\sin (\FF \log \epsilon^{-1}),\epsilon)$.
\end{remark}
 \begin{remark}\remlab{extension}
  The unperturbed invariant manifolds $(y,z)=\psi_0^{\pm}(x)$, $x\in S^{\pm}$, are obtained by Borel-Laplace. In particular, the manifolds are analytic if and only if the Borel transform $\mathcal B(\psi_0)(w)$ defines an entire function with at most exponential growth for $w\rightarrow \infty$, see \cite{bonckaert2008a,ksum}. However, the linear part of the equation for $\mathcal B(\psi_0)(w)$ has poles at $w=\pm i$ and this is also the general case for the actual Borel transform of $\psi_0^\pm$ (for the full nonlinear equation, see also \cite{costin2009,kristiansen2025a,martinet1983a,sauzin2015}).

  Having said that, our approach does not require us to assume that the unperturbed manifolds are nonanalytic. In fact,
    \begin{align}
    m^+(0,\epsilon)-m^-(0,\epsilon) = \epsilon^{-b} \e^{-\frac{\pi}{2}\epsilon^{-1}}
    \operatorname{Re} \begin{pmatrix}
                                       \widetilde \phi_0(\epsilon) \e^{i\FF \log \epsilon^{-1} +\epsilon \phi_1(\epsilon)} \\
                                       i \widetilde \phi_0(\epsilon)  \e^{i\FF \log \epsilon^{-1}}
                                      \end{pmatrix},
    \end{align}
    would be a more general statement for the splitting, with $\widetilde \phi_0(0)\ne 0$ if the unperturbed manifolds are nonanalytic at the origin. More generally, we have
    \begin{align*}
    \begin{cases}\frac{\partial^{\alpha}\widetilde \phi_0}{\partial \epsilon^\alpha}(0)=0,\quad \forall\,\alpha\in \{0,1\ldots,\beta-1\},\\
     \frac{\partial^{\beta}\widetilde \phi_0}{\partial \epsilon^\beta}(0)\ne 0,
    \end{cases}
    \end{align*}
 provided that $\beta\in \mathbb N$ is the smallest number for which the higher order ``corrections'' $\psi_{1,\beta}^\pm$, to the unperturbed manifold, are nonanalytic at the origin, see Lemma \ref{lem:Pnl}. These statements are simple consequences of \eqref{Deltauexp} below. We therefore speculate whether the following holds true: Either
 \begin{enumerate}
 \item the formal series expansion of the connection converges and $d(\epsilon)= 0$ for all $0<\epsilon\ll 1$,
 \end{enumerate}
 \textnormal{or}
 \begin{enumerate}[resume*]
  \item there is some $\beta\in \mathbb N_0$ and some $C_\beta\ne 0$ such that
 \begin{align*}
 d(\epsilon) =\epsilon^{-b+\beta} \e^{-\frac{\pi}{2}\epsilon^{-1}} \left(C_\beta+\mathcal O(\epsilon)\right).
 \end{align*}

 \end{enumerate}
 This conjecture relates to the existence of ``ultra-exponentially small splitting of separatrices'' as discussed in \cite{fiedler2025a}. We leave this open to future work.
 \end{remark}

\subsection{A summary of our approach}\seclab{summary}

A central technical aspect of the proof of \thmref{main}, is that we view \eqref{buhopf} as a local version of the blowup transformation
\begin{align}\eqlab{buthis}
 r\ge 0,\,(\breve x,\breve y,\breve z,\breve \epsilon )\in \mathbb S^3\mapsto \begin{cases}
                                                x = r\breve x,\\
                                                y = r^2\breve y, \\
                                                z = r^2\breve z,\\
                                                \epsilon =r\breve \epsilon.
                                               \end{cases}
\end{align}
(Here we refrain from using the standard notation (in blowup) $(\overline x,\overline y,\overline z,\overline \epsilon )\in \mathbb S^3$ for points on the sphere (see \cite{krupa_extending_2001}), since we would like to reserve the bar for conjugation of complex numbers, and therefore use $(\breve x,\breve y,\breve z,\breve \epsilon )\in \mathbb S^3$ instead).
In particular, we use the associated $\breve x=1$-chart, with chart-specific coordinates $(r_1,y_1,z_1,\epsilon_1)$ defined by
\begin{align}\eqlab{buthis1}
 \begin{cases}
                                                x = r_1,\\
                                                y = r_1^2y_1, \\
                                                z = r_1^2z_1,\\
                                                \epsilon =r_1\epsilon_1,
                                               \end{cases}
\end{align}
to extend the unstable and stable manifolds, which are naturally parameterized in compact subsets of the $(x_2,y_2,z_2)$-space as graphs $(y_2,z_2)=m_2^{\pm}(x_2,\epsilon)$ over $x_2\in \mathcal X_2^{\pm}$, see \eqref{zeroHopf}, to order $x=\mathcal O(1)\in \mathbb C$, $\vert x\vert>0$ sufficiently small. For the extension, we apply the flow in appropriate normal form coordinates, obtained by following the approach in \cite{uldall2024a}, using two separate notions of formal invariant manifolds: (i) formal series with respect to $r_1$ with $\epsilon_1$-dependent coefficients, and (ii) formal series with respect to $\epsilon_1$ with $r_1$-dependent coefficients (with $r_1\in S^\pm\subset \mathbb C$).
Our result shows that the unperturbed manifolds give the leading order approximation of the unstable and stable manifolds $(y,z)=m^{\pm}(x,\epsilon)$, for $x\in S^{\pm}$, $\vert x\vert>0$ small enough, respectively, as $\epsilon\to 0$. Therefore, whenever the unperturbed manifolds are distinct on the common domain $S^+\cap S^-$, we obtain an $\mathcal O(1)$-separation of the unstable and stable manifolds for  $x\in S^+\cap S^-$, $\vert x\vert \ge \xi>0$ small enough, for $0<\epsilon\ll 1$. To determine the separation of the invariant manifolds at $\{x=0\}$, we then derive a linear equation for the difference $(\Delta y,\Delta z) = (m^+-m^-)(x,\epsilon)$ (as in \cite{baldom2013a}) for $x$ on the imaginary axis $x\in i \mathbb R$, $\vert x\vert\ge 0$ small enough. As a novel aspect, we identify this set of equations as a slow-fast system with a normally hyperbolic critical manifold (of saddle-type) for $\epsilon\to 0$. Therefore, in contrast to the functional-analytic approach of \cite{baldom2013a}, we use Geometric Singular Perturbation Theory (GSPT) \cite{fen3,jones_1995} to study this linear system and derive a Fenichel-type normal form \cite[Section 5]{jones_1995}, bringing the linear system to diagonal form. In this way, the separation can easily be integrated from $x=\pm i \delta$, $\delta>0$ fixed small enough, to $x=0$. Here we also use that the equations are real-analytic: $(x,y,z)(t)$ is a solution of \eqref{model}, $\epsilon>0$, if only if $(\overline x,\overline y,\overline z)(t)$ is a solution, where the bar denotes the complex conjugate. We illustrate our approach in \figref{blowup}.

We believe that our approach to study the difference $(\Delta y,\Delta z)$ has general interest. Besides the  zero-Hopf bifurcations of higher co-dimensions discussed below, we are presently pursuing the same approach to generalize the results in \cite{hayes2016a} to general delayed-Hopf phenomena (see \cite{neishtadt1987a,neishtadt1988a}).  

 \begin{figure}[h!]
\begin{center}
{\includegraphics[width=.85\textwidth]{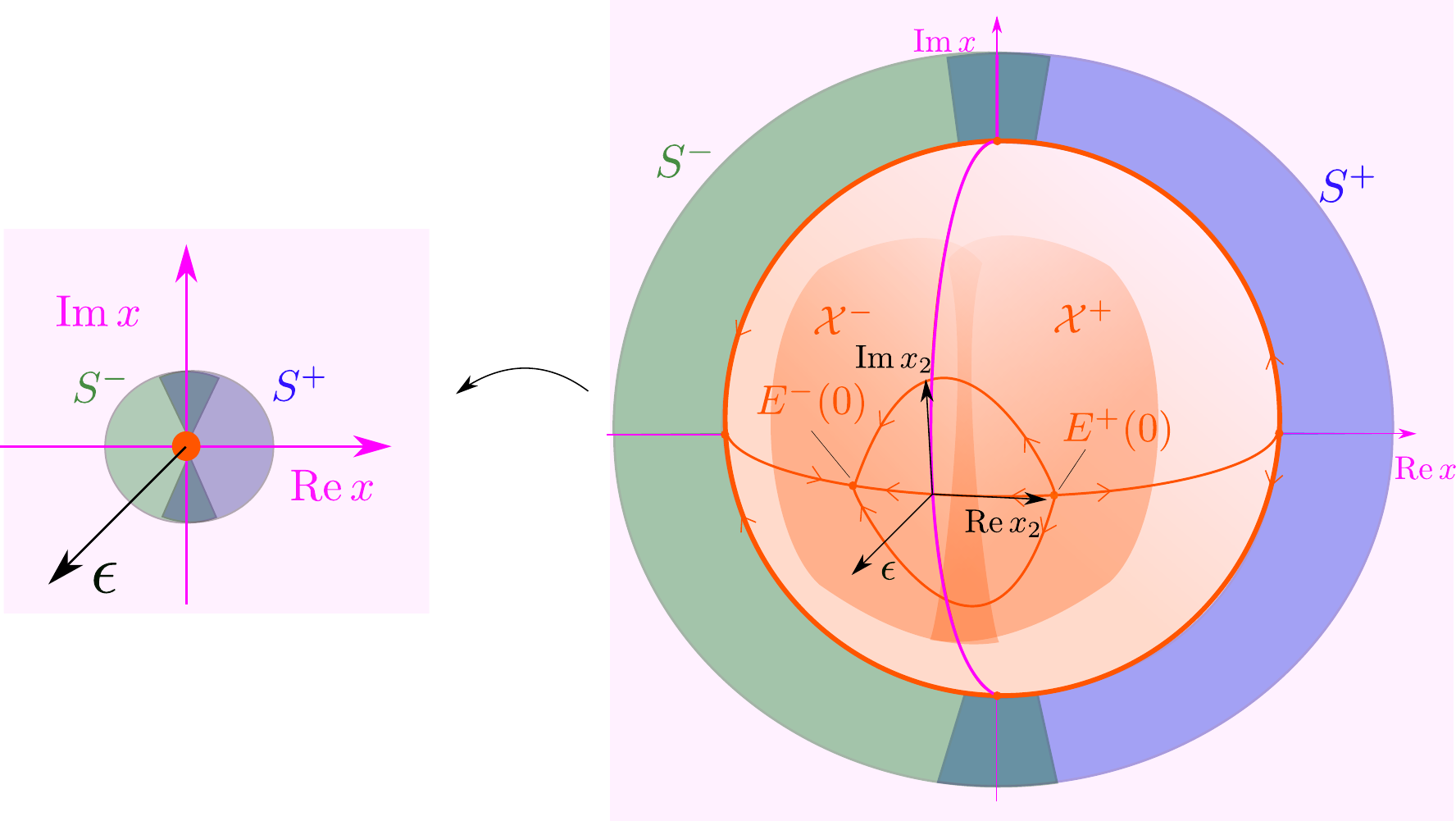}}
\end{center}
\caption{Illustration of our approach. We blow up $(x,y,z,\epsilon)=(0,0,0,0)$ to a complex $3$-sphere $\mathbb S^3$, see \eqref{buthis}, which we here indicate in the $(\operatorname{Re}(x),\operatorname{Im}(x),\epsilon)$-projection. On top of the sphere ($\breve \epsilon>0$), we use the (scaled) coordinates $(x_2,y_2,z_2)$, see \eqref{buhopf}, to describe the existence of the saddle-foci $E^{\pm}(\epsilon)$, $0<\epsilon\ll 1$, and their one-dimensional invariant manifolds as graphs  $(y,z)=m^{\pm}(x,\epsilon)$ over $x_2=\epsilon^{-1} x\in \mathcal X^{\pm}\subset \mathbb C$, for all $0<\epsilon\ll 1$. The orange curves are orbits of $\dot x_2=x_2^2-1$, $x_2\in \mathbb C$, and $\mathcal X^\mp$ is any fixed compact set in the basin of attraction for $x_2=-1$ and $x_2=1$ (for the backward flow), respectively. A central objective of our approach is to extend the unstable and stable manifolds of $E^{\pm}(\epsilon)$, respectively, by working in the coordinates $(r_1,y_1,z_1,\epsilon_1)$, see \eqref{r1y1z1eps10}, so that they can be compared with the unperturbed invariant manifolds that are defined as graphs over $x\in S^{\pm}$ (blue and green, respectively) within $\epsilon=0$. Our results show that if the unperturbed manifolds are non-analytic, then there is an $\mathcal O(1)$-splitting of the unstable and stable manifolds for $x\in S^+\cap S^-$, $\vert x\vert>0$ small enough, as $\epsilon\to 0$. Finally, we integrate the difference $(\Delta y,\Delta z)=(m^+-m^-)(x,\epsilon)$ down along the imaginary axis $x\in i\mathbb R$. In this step, we view the equation for the difference as a slow-fast system with a normally hyperbolic critical manifold, which we then treat by GSPT (through a Fenichel-type normal form).}
\figlab{blowup}
\end{figure}

\subsection{Discussion}\seclab{discussion}
An interesting aspect of our approach is that we do not use desingularization in association with our blowup. Essentially blowup is only used to compactify the $(x_2,y_2,z_2)$-space. Hence we expect that our approach can also be applied to splitting problems associated with discrete time systems (where desingularization has been an obstruction in general, see \cite{MR4814277}).

In \cite{new}, which can be seen as a follow-up paper, the present author studies an $\epsilon$-family of systems with degenerate zero-Hopf bifurcations for $\epsilon=0$ of arbitrary co-dimension $\kappa\in \mathbb N$, $\kappa\ge 3$, using the same approach. Here we find $2(\kappa-1)$ unperturbed invariant manifolds and the separation of these manifolds on common domains lead to exponentially small phenomena of $\kappa-1$ (formal) heteroclinic connections. In \cite{new}, we therefore generalize the last step of the method for dealing with the equation for the differences $(\Delta y,\Delta z)(x,\epsilon)$. For this, we use the notion of elliptic and hyperbolic paths in $x\in \mathbb C$ in a systematic way (these curves are also known as Stokes and anti-Stokes paths, respectively, see e.g. \cite{neishtadt1988a}). Such paths $x(s)\in \mathbb C$, $s\in \mathbb R$, in the complex plane are characterized by the linear part of the $(y,z)(s)$-subsystem being hyperbolic (expanding/contracting) and elliptic (oscillatory), respectively. In the present case of \eqref{model}, these paths correspond to orbits of $\dot x_2=x_2^2-1$ and $\dot x_2=i(x_2^2-1)$, respectively. The results of \cite{new} show that the $j$th splitting, $j\in \{1,\ldots,\kappa-1\}$, takes the form $$\epsilon^{e(\kappa)}\e^{-\epsilon^{1-\kappa}T_j } \left(C_j+\mathcal O(\epsilon)\right), \quad T_j>0,\,e(\kappa)\in \mathbb R,$$ under a nondegeneracy condition. Here $\{T_j\}_{j=1}^{\kappa-1}$ are finite blowup times for $(\kappa-1)$-many special unbounded  solutions $\{\mathcal H_j\}_{j=1}^{\kappa-1}\subset \mathbb C$, respectively, of the reduced problem in imaginary time: $\dot x_2 = iQ_2(x_2)$ with $Q_2$ a polynomial of degree $\kappa$. All roots $\{q_j\}_{j=1}^{\kappa}$ of $Q_2$ are assumed to be real and simple. Then $T_j>0$ is expressed as
\begin{align*}
T_j = \left|\sum_{l=1}^j \frac{\pi}{Q'(q_l)}\right|.
\end{align*}

We refer to \cite{new} for further details. For $\kappa=2$ and $Q_2(x_2)=x_2^2-1$, one has $\mathcal H_1=i\mathbb R$ and $T_1=\frac{\pi}{2}$ in agreement with \eqref{dist}. Now, the blowup in imaginary time is intrinsically related to singularities of time parametrizations of connecting orbits, see \cite{fiedler2025a}, but the time-parametrizations are not used explicitly in \cite{new}; the construction is purely based upon phase space methods of dynamical systems theory.

 For the generalization of our approach in \cite{new} to higher co-dimension, we would like mention \cite{hayes2016a,neishtadt1987a,neishtadt1988a} as inspirations. In particular, the reference \cite{hayes2016a} used GSPT \cite{fen3,jones_1995} and blowup \cite{dumortier_1996,krupa_extending_2001} to describe exponentially small phenomena in simple model systems of the form $\epsilon z'(t) = (t+i) z + \epsilon h(t,z,\epsilon)$ for slow passage through a Hopf bifurcation using hyperbolic paths. On the other hand, \cite{neishtadt1987a,neishtadt1988a} used elliptic paths to estimate the delay of general systems with slow passage through a Hopf bifurcation.

We emphasize that there is a different (but related) notion  of Stokes and anti-Stokes lines of (exponential) asymptotic expansions of analytic functions, see \cite{berry1988a,chapman1998a}. Recently, methods from exponential asymptotics have been used to describe exponentially small phenomena in nonlinear equations, see e.g. \cite{chapman1998a,CHAPMAN2009319,6a948bd44bf4469e875c1f1482ef7619} and references therein. Although this approach appears to be more formal, it has been extremely successful. In fact, the results of \cite{CHAPMAN2009319}, which show excellent agreement with numerical computations, presently seem to escape rigorous methods.  The reference \cite{chapman1998a} has a summary of the approach: First, solutions are expanded in formal power series with respect $\epsilon$, having $t$-dependent coefficients $\phi_\alpha(t)$, $\alpha\in \mathbb N$. Next, one applies a WKB-type ansatz for the behaviour of $\phi_\alpha$ as $\alpha\to \infty$. This in turn induces an optimal truncation of the asymptotic series from which the dominant term of the exponentially small terms can be determined. A similar approach was used in the influential paper \cite{kruskal1991a} by Kruskal and Segur on nonexistence of connecting orbits in a model for crystal growth.

Finally, we would like to mention \cite{MR4855745} as an inspiration for this work. This paper studies weak analytic invariant manifolds of hyperbolic nodes close to saddle-nodes, which also relates to exponentially small phenomena. In summary, \cite{MR4855745} shows (following \cite{MR4445442}) that if the center manifold at the saddle-node is non-analytic, then the weak analytic manifolds can be controlled, see also \cite{rousseau2005a}. The unperturbed invariant manifolds in the present paper are also center-like manifolds, being graphs over the zero-eigenspace. I therefore find that \cite{MR4855745} bears some important similarities with the present paper.

\subsection{Overview}
The paper is organized as follows: In \secref{unperturbed}, we consider the $\epsilon=0$ limit and describe the unperturbed invariant manifolds, see \lemmaref{m0pm}. Subsequently, in \secref{model2} we consider \eqref{zeroHopf} with $0<\epsilon\ll 1$ and describe the equilibria $E^{\pm}(\epsilon)$ and their one-dimensional invariant manifolds as graphs over large but fixed compact domains $x_2\in \mathcal X_2^{\pm}\subset \mathbb C$ (corresponding to $x=\mathcal O(\epsilon)$ by \eqref{buhopf}) for $0<\epsilon\ll 1$, see \propref{unstable2}. In \secref{blowup}, we describe our blowup approach. In particular, in this section we extend the unstable and stable manifolds to $x=\mathcal O(1)$, by working in the $\breve x=1$-chart associated with \eqref{buthis}, see \eqref{buthis1}, and compare these manifolds with the unperturbed ones, see \propref{prop3}. Finally, in \secref{difference} we
complete the proof of \thmref{main} by setting up an equation for the difference $(\Delta y,\Delta z)(x,\epsilon)$ for $x\in i\mathbb R$. We identify this equation as a slow-fast system and derive a Fenichel-type normal form. From this normal form, we complete the proof of \thmref{main}, see \secref{completing}.

\section{The $\epsilon=0$-system: The unperturbed invariant manifolds}\seclab{unperturbed}
In this section, we consider the unperturbed system:
\begin{equation}\eqlab{model00}
\begin{aligned}
 x' &=x^2+a (y^2 +z^2) + F(x,y,z,0),\\
 y'&= -bx y + z+G(x,y,z,0),\\
 z'&= -bx z - y+H(x,y,z,0).
\end{aligned}
\end{equation}
obtained from \eqref{model0} by setting $\epsilon=0$ (or equivalently from \eqref{model0} by setting $\mu=\nu=0$).
Let $S^{\pm}(\delta,\chi)\subset \mathbb C$ denote the local open sectors, centered along the positive ($+$) (negative $(-$), respectively) axis, of radius $\delta>0$ and opening $\pi+\chi\in (\pi,2\pi)$:
\begin{align}
 S^{\pm}(\delta,\chi)=\left\{x\in \mathbb C\,:\,0< \vert x\vert< \delta\, \mbox{ and }\, \vert \operatorname{Arg}(\pm x)\vert<  \frac{\pi+\chi}{2}\right\},\nonumber
\end{align}
respectively,
see \figref{Spm}(a). Notice that $0<\chi<\pi$.
\begin{lemma}\lemmalab{m0pm}
Consider \eqref{model00}. Then for $\delta>0$ sufficiently small, there exist two unique local invariant manifolds of \eqref{model00} as graphs over $S^{\pm}=S^{\pm}(\delta,\chi)$:
\begin{align}
(y,z)=\psi_0^{\pm} (x), \quad x\in S^{\pm}.\eqlab{psi0omg}
\end{align}
Here $\psi_0^{\pm}:S^{\pm}\rightarrow \mathbb C^2$ are each real-analytic and $\psi_0^{\pm} (x)=\mathcal O(x^3)$ uniformly on the closure of $S^{\pm}$. In particular,
they are the $1$-sums of a Gevrey-$1$ series:
\begin{align}
 \psi_0^{\pm}(x) \sim_1 \sum_{\alpha=3}^\infty \psi_{0,\alpha} x^\alpha, \quad \vert \psi_{0,\alpha}\vert \le c_1 c_2^\alpha \alpha! \quad \forall\,\alpha\ge 3,\eqlab{gevrey1series}
\end{align}
for some $c_1,c_2>0$, in the directions defined by $S^{\pm}$, respectively. In particular, the series in \eqref{gevrey1series} is independent of $\pm$.
\end{lemma}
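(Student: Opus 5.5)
We will write the invariance condition for a graph $(y,z)=\psi(x)$ as a singular ODE in $x$ and apply the standard Borel--Laplace ($1$-summability) theory for saddle-node type singularities. In the complex variable $w=y+iz$ (with $\tilde w=y-iz$ treated independently after complexification), invariance of $(y,z)=\psi(x)$ under \eqref{model00} reads
\begin{align*}
\bigl(x^2+a\,w\tilde w+F\bigr)\,\frac{dw}{dx} = -i w - bxw + (G+iH),
\end{align*}
and analogously for $\tilde w$ with $+i$ in place of $-i$. Multiplying by $x^{-2}$, and noting that $a\,w\tilde w+F(x,\psi,0)=\FF x^3+\mathcal O(x^4)$ along candidate graphs with $\psi=\mathcal O(x^3)$, we obtain a system of the form
\begin{align*}
x^2\frac{d\psi}{dx} = A\psi + x\,B\psi + f(x,\psi),\quad A=\operatorname{diag}(-i,i),
\end{align*}
with $f$ analytic and $f(x,0)=\mathcal O(x^3)$. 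This is a doubly resonant saddle-node (Poincar\'e rank one) with invertible $A$.

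First, the formal series is constructed. Since $A$ is invertible, the coefficient of $x^\alpha$ is determined recursively by $A\psi_{0,\alpha}=(\alpha-1)\psi_{0,\alpha-1}-(\text{known terms})$. This yields a unique formal solution, which starts at order $3$ because $G,H=\mathcal O(x^3)$ on $\{y=z=0\}$. Uniqueness makes the series independent of $\pm$, and real-analyticity of the vector field gives real coefficients in the $(y,z)$ variables.

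Second, the Gevrey-$1$ bound $\vert\psi_{0,\alpha}\vert\le c_1c_2^\alpha\alpha!$ follows from the standard majorant argument. The factor $(\alpha-1)$ in the recursion produces the factorial growth, while the nonlinear terms are handled by the usual estimate on Cauchy products of Gevrey sequences. Alternatively, one invokes the general theorem (Ramis--Sibuya, Braaksma, Bonckaert--De Maesschalck \cite{bonckaert2008a}) that formal solutions of such rank-one systems are $1$-summable. Its singular directions lie along $\operatorname{Arg}(w)=\operatorname{Arg}(\pm i)$ in the Borel plane, corresponding to the eigenvalues of $A$. This is precisely why the sums exist on sectors $S^\pm$ bisected by the positive and negative real axes, with openings $\pi+\chi<2\pi$ that avoid the imaginary directions.

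Third, for actual existence and uniqueness on $S^\pm$ without citing summability, one can work directly in the sector. Rotate to $x=\pm s$ and write $\psi=\hat\psi_N+x^N\phi$ with $N$ a truncation order. The resulting equation is then solved by a fixed point argument for the integral operator
\begin{align*}
\phi\mapsto\int \e^{A(1/x-1/\xi)}(\cdots)\,d\xi,
\end{align*}
integrating each component along paths in $S^\pm$ on which $\operatorname{Re}(\mp i/\xi)$ is monotone. Such paths are available for every direction in an opening strictly less than $2\pi$ around $\pm\mathbb R_+$. The resulting solution is bounded by $\mathcal O(x^3)$ uniformly, and uniqueness follows from the boundedness condition. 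Real-analyticity holds because $\overline{\psi^\pm(\bar x)}$ is also a solution on $S^\pm$ (which is conjugation-symmetric), so uniqueness forces equality.

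The main obstacle is this sectorial estimate: choosing integration paths for both the $-i$ and $+i$ components simultaneously on a sector of opening exceeding $\pi$, and controlling the $x\,B$ term. The $x\,B$ term produces factors $x^{\pm b}$-type corrections that are harmless here because we integrate on bounded domains. I would handle this by treating the two components with different base points (one from $\pm\delta\e^{i\theta}$, the other from $0$) and using the contraction in a weighted sup-norm with weight $\vert x\vert^{-3}$. The summability statement in \eqref{gevrey1series} then follows from Watson's lemma together with the Gevrey asymptotics given by this construction.
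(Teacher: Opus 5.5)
The gap is in your reduction, and it is exactly the step the paper's proof consists of. Your skeleton matches the paper: a Poincar\'e-rank-one saddle-node with invertible leading matrix, a unique formal solution, and $1$-summability via Bonckaert--De Maesschalck. But the equation you feed into that theorem is not analytic. Since $F,G,H$ are only cubic in $(x,y,z)$, bringing the invariance equation to the form $x^2\psi'=\dots$ means dividing by $1+x^{-2}(a\,w\tilde w+F)$. The quotient $x^{-2}(a\,w\tilde w+F)$ contains terms such as $a\,w\tilde w/x^2$, $y^2/x$ and $y^3/x^2$, so $f$ is not analytic at $(x,\psi)=(0,0)$. Restricting to ``candidate graphs with $\psi=\mathcal O(x^3)$'' gives analyticity only on a cusp-shaped region, and neither the summability theorems you cite nor the standard majorant argument apply directly there.

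The missing step is the weighted rescaling $(y,z)=x^2\zeta$, i.e.\ the chart $x=r_1$, $y=r_1^2y_1$, $z=r_1^2z_1$ used in the paper. Then $x^{-2}(a(y^2+z^2)+F)=\mathcal O(x)$ analytically in $(x,\zeta)$, and you obtain a genuine analytic generalized saddle-node $x^2\zeta'=\Omega\zeta+\mathcal O(x)$. The Bonckaert--De Maesschalck results now yield $\zeta=\psi_{1,0}^\pm(x)=\mathcal O(x)$ as $1$-sums on $S^\pm$, and hence $\psi_0^\pm=x^2\psi_{1,0}^\pm=\mathcal O(x^3)$. Note also that $S^\pm$ \emph{do} contain the imaginary directions; the difference $m^+-m^-$ is later computed on $x\in i\mathbb R$ inside $S^+\cap S^-$. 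It is the Laplace directions in the Borel plane that must avoid $\pm i$, whereas $S^+$ only has to avoid $\mathbb R_-$.

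Your self-contained alternative also fails as described. The homogeneous solutions of $x^2\psi'=(A+xB)\psi$ behave like $\mathrm{e}^{-A/x}x^{B}c$ with $B=\operatorname{diag}(-b+i\FF,-b-i\FF)$. So the kernel is $\mathrm{e}^{-A(1/x-1/\xi)}(x/\xi)^{B}$, and on $\mathbb R_\pm$, where the exponential has modulus one, $\vert\mathrm{e}^{-A/x}x^{B}c\vert\asymp\vert x\vert^{-b}\vert c\vert$ is unbounded. The factor $x^{B}$ is therefore not harmless. Near $\mathbb R_\pm$ both components must be integrated from $x=0$, because a base point on $\vert\xi\vert=\delta$ adds such a homogeneous piece and destroys the $\mathcal O(x^3)$ bound. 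Yet near the corners $\delta\mathrm{e}^{\pm i(\pi+\chi)/2}$ of $S^+$, one of the two components cannot be reached from $0$ by a path inside $S^+$ along which $\operatorname{Re}(\pm i/\xi)$ is monotone; the boundary of the truncated sector blocks it.

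Hence no fixed choice of base points works on a sector of opening $\pi+\chi>\pi$. The standard remedy is to solve on Borel disks $\{\operatorname{Re}(\mathrm{e}^{i\theta}/x)>\delta^{-1}\}$, $\vert\theta\vert<\pi/2$, where both components can be integrated from $0$. You then glue using uniqueness of bounded solutions on the overlaps, which contain the direction $\mathbb R_+$, at the cost of shrinking $\delta$.

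Finally, a contraction at fixed truncation order $N$ gives only $\psi-\hat\psi_N=\mathcal O(x^N)$ with uncontrolled $N$-dependence. Watson's lemma needs the uniform bounds $CK^NN!\,\vert x\vert^N$, which this does not provide. Unless you supply such estimates, or invoke the Ramis--Sibuya theorem that asymptotic solutions of such equations are automatically Gevrey asymptotic, the $1$-summability statement rests on the cited theorem, just as in the paper.
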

\begin{proof}
%
We define $(r_1,y_1,z_1)$ by
\begin{align}\eqlab{this1}
 \begin{cases}
  x = r_1,\\
  y = r_1^2 y_1,\\
  z =r_1^2 z_1.
 \end{cases}
\end{align}
This gives
\begin{equation}\eqlab{r1y1z1}
\begin{aligned}
r_1' &= r_1^2 (1+\mathcal O(r_1)),\\
\zeta_1'&=\Omega \zeta_1+\mathcal O(r_1),
\end{aligned}
 \end{equation}
where
\begin{align}\eqlab{Omegadefn}
 \Omega:= \begin{pmatrix}
          0 &1\\
          -1 &0
         \end{pmatrix},
         \end{align}
upon setting $$\zeta_1= (y_1,z_1)^{\mathrm{T}},$$
or as
a first order real-analytic system
\begin{align*}
r_1^2 \frac{d\zeta_1}{dr_1} &= \Omega \zeta_1 + \mathcal O(r_1).
\end{align*}
This system is a generalized saddle-node with Poincar\'e rank $1$ studied by many authors, see e.g. \cite{bonckaert2008a}. In particular, the linearization of the right hand side for $r_1=0$ at $\zeta_1=(0,0)$ is given by $\Omega$, having nonzero eigenvalues $\pm i$.
It then follows from \cite[Theorem 3]{bonckaert2008a} (as a corollary of the invariance of $y=0$ for the normal form \cite[Eq. (8)]{bonckaert2008a}, see also \cite[Lemma 4.8]{uldall2024a} or
\cite[Corollary 4.2]{ksum} where the statement is made more explicit) that there exists two solutions
\begin{align}
 \zeta_1 = \psi^{\pm}_{1,0}(r_1),\quad r_1\in S^{\pm},\eqlab{zeta1psi}
\end{align}
with $\psi_{1,0}^{\pm}(0)=0$ and $S^{\pm}$ as in the statement. In particular, $\psi^{\pm}_{1,0}:S^{\pm}\rightarrow \mathbb C^2$ are each real-analytic, the $1$-sum of a Gevrey-$1$ series, with $\psi^{\pm}_{1,0}(r_1)=\mathcal O(r_1)$ uniformly on the closure of $S^{\pm}$, respectively.  We complete the theorem by setting  $\psi_0^{\pm}(r_1): = r_1^2 \psi^{\pm}_{1,0}(r_1)$, cf. \eqref{this1}.
\end{proof}

It is clear that $(y,z)=\psi_0^{\pm}(x)$, $x\in S^{\pm} \cap \mathbb R$, define unstable/stable sets, respectively, of the zero-Hopf point $(x,y,z)=(0,0,0)$ for $\epsilon=0$, see \figref{Spm}(b).

\textit{We refer to the invariant manifolds $(y,z)=\psi^{\pm}_0 (x)$, $x\in S^{\pm}$, of \eqref{model}, as the unperturbed manifolds}.

  \begin{figure}[h!]
\begin{center}
\subfigure[]{\includegraphics[width=.45\textwidth]{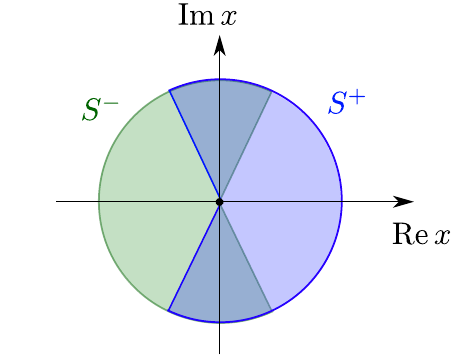}}
\subfigure[]{\includegraphics[width=.45\textwidth]{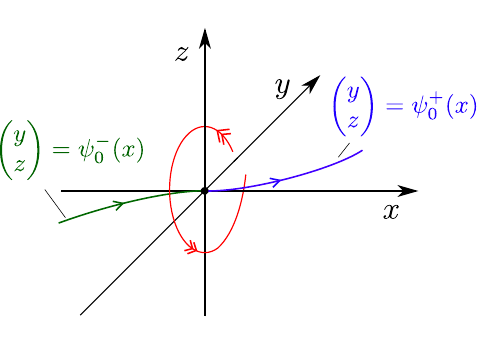}}
\end{center}
\caption{In (a): The sectors $S^{\pm}$ in the complex plane. In (b): The invariant graphs \eqref{psi0omg} of \eqref{model} for $\epsilon=0$ in the real $(x,y,z)$-space.}
\figlab{Spm}
\end{figure}

\section{The perturbed system: The unstable and stable manifolds}\seclab{model2}
In this section, we consider \eqref{zeroHopf} for all $0<\epsilon\ll 1$, repeated here in the fast form
\begin{equation}\eqlab{model2}
\begin{aligned}
 x_2' &=\epsilon \left(x_2^2-1+a \epsilon^2 (y_2^2 +z_2^2)+\epsilon F_2(x_2,\epsilon y_2,\epsilon z_2,\epsilon)\right),\\
 y_2'&= \epsilon b(-x_2+\sigma) y_2 + z_2+\epsilon G_2(x_2,\epsilon y_2,\epsilon z_2,\epsilon),\\
 z_2'&= \epsilon b( -x_2+\sigma) z_2 - y_2+\epsilon H_2(x_2, \epsilon y_2,\epsilon z_2,\epsilon),
\end{aligned}
\end{equation}
with $(\cdot)'=\frac{d}{d\tau}$ and where $F_2,G_2,H_2$ are defined by
\begin{align*}
W_2 (x_2,\epsilon y_2,\epsilon z_2,\epsilon):= \epsilon^{-3} W(\epsilon x_2,\epsilon (\epsilon y_2),\epsilon (\epsilon z_2), \epsilon),\quad W=F,G,H.
\end{align*}
Notice that $F_2,G_2$ and $H_2$ have analytic extensions to $\epsilon=0$ by \eqref{R0}. In particular, as $F$, $G$ and $H$ are assumed to be real-analytic on a neighborhood of $(x,y,z,\mu)=(0,0,0,0)$, it follows that $F_2$, $G_2$ and $H_2$ are analytic on a polydisc:
\begin{align}\eqlab{polydisc}
(x_2,\epsilon y_2,\epsilon z_2,\epsilon)\in (\epsilon^{-1} B_\xi) \times (\epsilon^{-1} B_\xi)\times (\epsilon^{-1} B_\xi) \times B_\xi,
\end{align}
 for $\xi>0$ small enough. Here $B_\xi\subset \mathbb C$ denotes the open disc of radius $\xi>0$ centered at the origin. We recall that \eqref{model2} is obtained from \eqref{model} by the change of coordinates
\begin{align}\eqlab{x2y2z2}
 (x_2,y_2,z_2)\mapsto \begin{cases} x =\epsilon x_2,\\
  y = \epsilon^2 y_2,\\
  z =\epsilon^2 z_2.
 \end{cases}
\end{align}
The system \eqref{zeroHopf} is a slow-fast system for $0<\epsilon\ll 1$, with $x_2$ being slow and $(y_2,z_2)$ being fast. For $\epsilon=0$, we obtain the layer problem
\begin{equation}\eqlab{layer2}
\begin{aligned}
 x_2'&=0,\\
 y_2'&= z_2,\\
 z_2'&= -y_2,
\end{aligned}
\end{equation}
with $(x_2,0,0)$ defining a critical manifold, which is normally elliptic. In terms of the slow time $t=\epsilon \tau$, we have
\begin{equation}\eqlab{zeroHopfslow}
\begin{aligned}
 \dot x_2 &=x_2^2-1+a \epsilon^2 (y_2^2 +z_2^2)+\epsilon F_2(x_2,\epsilon y_2,\epsilon z_2,\epsilon),\\
 \epsilon \dot y_2&= \epsilon b(-x_2+\sigma) y_2 + z_2+\epsilon G_2(x_2,\epsilon y_2,\epsilon z_2,\epsilon),\\
 \epsilon \dot z_2&= \epsilon b( -x_2+\sigma) z_2 - y_2+\epsilon H_2(x_2, \epsilon y_2,\epsilon z_2,\epsilon),
\end{aligned}
\end{equation}
as in \eqref{zeroHopf},
which for $\epsilon=0$ gives the reduced problem:
\begin{align}\eqlab{reduced}
 \dot x_2  =Q_2(x_2):= x_2^2-1,
\end{align}
for $y_2=z_2=0$. We have hyperbolic singularities at $x_2=\pm 1$, being unstable and stable, respectively.
We illustrate the dynamics of \eqref{reduced}, $x_2\in \mathbb C$, on the Poincar\'e sphere  \cite{dumortier2006a} in \figref{reduced}. The compactification is based upon
\begin{align*}
 \begin{cases}\operatorname{Re}(x_2) = {\breve u}{\breve w}^{-1},\\
 \operatorname{Im}(x_2) = {\breve v}{\breve w}^{-1},
 \end{cases}
\end{align*}
with $(\breve u,\breve v,\breve w)\in \mathbb H^2$. Here $\mathbb H^2\subset \mathbb R^3$ denotes the hemisphere:
\begin{align*}
 \mathbb H^2:=\{(\breve u,\breve v,\breve w)\in \mathbb R^3\,:\,\breve u^2+\breve v^2+\breve w^2=1,\,\breve w\ge 0\}.
\end{align*}
The equator circle $\mathbb S^1 = \mathbb H^2\cap \{\breve w=0\}$ represents $x_2=\infty$.
There is a one-to-one correspondence between $x_2\in \mathbb C$ and $(\breve u,\breve v,\breve w)\in \operatorname{int}\mathbb H^2:=\mathbb H^2\cap \{\breve w>0\}$. As usual in Poincar\'e compactification, we use a desingularization corresponding to division of the right hand side by $1+\vert x_2\vert^2$ to ensure that the flow is well-defined on the equator circle.

 \begin{figure}[h!]
\begin{center}
{\includegraphics[width=.475\textwidth]{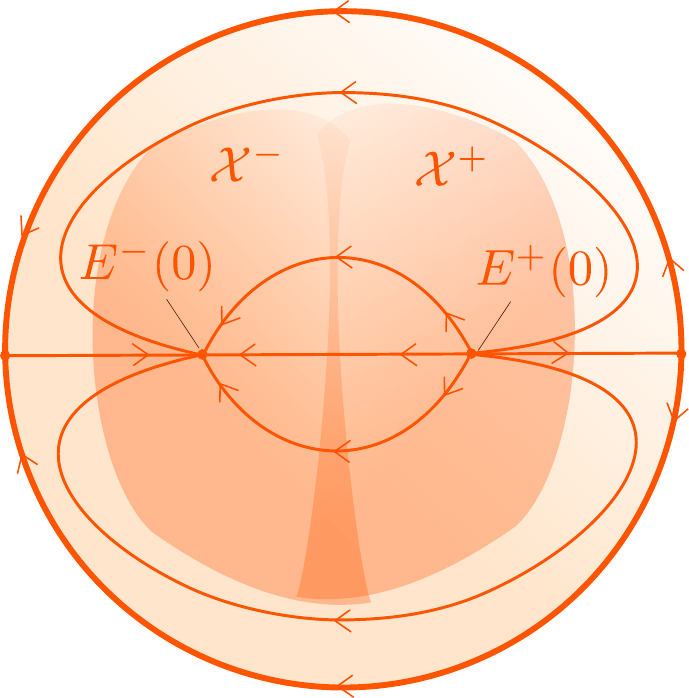}}
\end{center}
\caption{Phase portrait of $x_2'=x_2^2-1$ on the Poincar\'e sphere (here represented as a disc). The boundary circle $\mathbb S^1$ represents $x_2=\infty$. The compact domains $\mathcal X^{\mp}$, used in \propref{unstable2}, are subsets of the basin of attraction of $x_2=\mp 1$ for the forward and backward flow, respectively. }
\figlab{reduced}
\end{figure}

\begin{lemma}\lemmalab{Epm}
 Consider \eqref{model2} with $0<\epsilon\ll 1$ and suppose \eqref{bass}. Then there exist two hyperbolic  saddle-foci $E^{\pm} (\epsilon)$ given by
 \begin{align*}
  (x_2,y_2,z_2)=(X^{\pm}({\epsilon}),\epsilon  Y^{\pm}(\epsilon ),\epsilon Z^{\pm} ({\epsilon})),\quad X^\pm(0)=\pm 1,
 \end{align*}
 for all $0<\epsilon<\epsilon_0$, $0<\epsilon_0\ll 1$, respectively. Here $X^{\pm},Y^{\pm},Z^{\pm}$ are all real-analytic with respect to ${\epsilon}\in [0,\epsilon_0)$. Moreover, $E^{+}(\epsilon)$ has a one-dimensional unstable manifold $\mathbf{W}^u(E^+(\epsilon))$ and a two-dimensional stable manifold whereas $E^{-}(\epsilon)$ has a one-dimensional stable manifold $\mathbf{W}^s(E^-(\epsilon))$ and a two-dimensional unstable manifold.
\end{lemma}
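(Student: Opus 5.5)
To prove \lemmaref{Epm}, I will first locate the equilibria using the implicit function theorem. The equilibrium conditions of \eqref{model2} are obtained by setting the bracket in the $x_2$-equation and the right hand sides of the $y_2,z_2$-equations to zero. In the $(y_2,z_2)$-equations, the linear part at $\epsilon=0$ is $\Omega$, which is invertible. These two equations can therefore be solved for $(y_2,z_2)$ as real-analytic functions of $(x_2,\epsilon)$ for $x_2$ in a compact neighbourhood of $\pm1$ and $\epsilon$ small. Since the inhomogeneous terms are $\epsilon G_2,\epsilon H_2$, the solution has the form $(y_2,z_2)=\epsilon(\tilde Y,\tilde Z)(x_2,\epsilon)$.

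Substituting this into the $x_2$-equation gives $x_2^2-1+\mathcal O(\epsilon)=0$. The derivative with respect to $x_2$ at $(\pm1,0)$ equals $\pm2\neq0$. A second application of the implicit function theorem then gives real-analytic branches $X^\pm(\epsilon)$ with $X^\pm(0)=\pm1$, and we set $Y^\pm(\epsilon)=\tilde Y(X^\pm(\epsilon),\epsilon)$, and similarly for $Z^\pm$. Real-analyticity in $\epsilon$ (with real values for real $\epsilon$) follows because all functions involved are real-analytic and are analytic on the polydisc \eqref{polydisc}.

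Next I compute the linearization at $E^\pm(\epsilon)$ in the fast time. The $x_2$-row is $\epsilon(2X^\pm+\mathcal O(\epsilon))$ in the $x_2$-direction, plus $\mathcal O(\epsilon^2)$ couplings to $(y_2,z_2)$; these couplings are $\mathcal O(\epsilon^2)$ because of the factors $\epsilon^2 a(\cdot)$ and $\epsilon F_2(x_2,\epsilon y_2,\epsilon z_2,\epsilon)$. The $(y_2,z_2)$-block is
$$\Omega+\epsilon\bigl(b(\sigma-X^\pm)I+\partial_{(y_2,z_2)}(G_2,H_2)\bigr),$$
and $\partial_{(y_2,z_2)}(G_2,H_2)=\mathcal O(\epsilon)$ because $G_2,H_2$ depend on $(y_2,z_2)$ only through $\epsilon y_2,\epsilon z_2$. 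The couplings from $x_2$ into $(y_2,z_2)$ are $\mathcal O(\epsilon)$.

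The eigenvalues are computed by perturbation. One eigenvalue is real, equal to $\pm2\epsilon+\mathcal O(\epsilon^2)$. For the complex pair, the off-diagonal block contributes only at $\mathcal O(\epsilon^3)$ relative to the gap $\pm i$, so the pair is $\pm i+\epsilon b(\sigma\mp1)+\mathcal O(\epsilon^2)$. This is the step requiring care: the real parts of the complex pair are $\mathcal O(\epsilon)$, so the $\mathcal O(\epsilon)$ terms must be tracked exactly. The real eigenvalue is separated from $\pm i$ by $\mathcal O(1)$, so a Schur complement or block-diagonalization argument controls the errors at $\mathcal O(\epsilon^2)$, which suffices since the leading real parts are of exact order $\epsilon$.

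The sign conclusions then follow from \eqref{bass}. With $b>0$ and $|\sigma|<1$, at $E^+$ the real eigenvalue $2\epsilon>0$ and the complex pair has real part $\epsilon b(\sigma-1)<0$. Hence $E^+$ is a saddle-focus with a one-dimensional unstable manifold and a two-dimensional stable manifold. At $E^-$ the real eigenvalue $-2\epsilon<0$ and the complex pair has real part $\epsilon b(\sigma+1)>0$, which gives a one-dimensional stable manifold and a two-dimensional unstable manifold. Hyperbolicity holds for $0<\epsilon<\epsilon_0$, and the invariant manifolds follow from the stable manifold theorem.
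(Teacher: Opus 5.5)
Your proposal is correct and follows the paper's route: the paper applies the implicit function theorem directly at the regular solutions $(x_2,y_2,z_2)=(\pm1,0,0)$ for $\epsilon=0$, and then derives the stability statements from a direct eigenvalue computation using \eqref{bass}. Your two-step use of the implicit function theorem and your block-perturbation of the spectrum, giving $\pm 2\epsilon+\mathcal O(\epsilon^2)$ and $\pm i+\epsilon b(\sigma\mp1)+\mathcal O(\epsilon^2)$, just make the paper's ``simple calculations'' explicit.
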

\begin{proof}
Singularities of \eqref{model2} are given by 
\begin{align*}
  0 &=x_2^2-1+a \epsilon^2 (y_2^2 +z_2^2) + \epsilon F_2(x_2,\epsilon y_2,\epsilon z_2,\epsilon),\\
 0&= \epsilon b(-x_2+ \sigma) y_2 + z_2+\epsilon G_2(x_2,\epsilon y_2,\epsilon z_2,\epsilon),\\
 0&= \epsilon b( -x_2+ \sigma) z_2 - y_2+\epsilon H_2(x_2,\epsilon y_2,\epsilon z_2,\epsilon).
\end{align*}
%
The existence and expansion of $E^{\pm}$ then follow from the implicit function theorem, using the regular solutions $(x_2,y_2,z_2)=(\pm 1,0,0)$ for $\epsilon=0$. Subsequently, the statements regarding stability follow from simple calculations using \eqref{bass}.
\end{proof}
We illustrate the results of \lemmaref{Epm} in \figref{x2y2z2}.
  \begin{figure}[h!]
\begin{center}
\subfigure[]{\includegraphics[width=.45\textwidth]{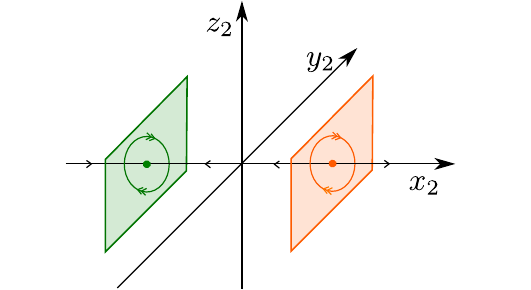}}
\subfigure[]{\includegraphics[width=.45\textwidth]{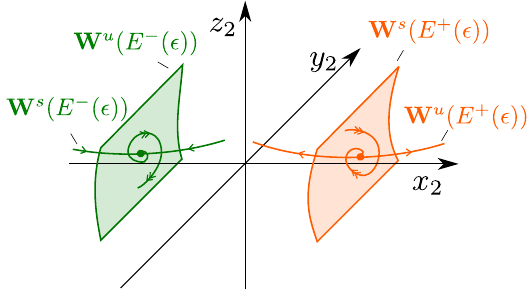}}
\end{center}
\caption{Illustration of the results of \lemmaref{Epm}. In (a): $\epsilon=0$. In (b): $0<\epsilon\ll 1$. }
\figlab{x2y2z2}
\end{figure}
%
In the following, we describe the local invariant manifolds $\mathbf{W}_{loc}^s(E^-(\epsilon))$ and $\mathbf{W}_{loc}^u(E^+(\epsilon))$ as graphs $(y_2,z_2)=m_2^\pm(x_2)$ over $x_2$. These manifolds are solutions of the invariance equation:
\begin{align}\eqlab{invman0}
 \left(\Omega +\epsilon b(-x_2+\sigma) \operatorname{Id}\right)\begin{pmatrix}
                                       y_2\\
                                       z_2
                                      \end{pmatrix}+\epsilon \begin{pmatrix}
                                      G_2\\
                                      H_2\end{pmatrix}=\epsilon \left(x_2^2-1+a\epsilon^2 (y_2^2+z_2^2)+\epsilon F_2 \right) \frac{d}{dx_2}\begin{pmatrix}
                                       y_2\\
                                       z_2
                                      \end{pmatrix},
\end{align}
where $W_2=W_2(x_2,y_2,z_2,\epsilon)$, $W=F,G,H$, and
\begin{align*}
\operatorname{Id}=\operatorname{diag}(1,1)\in \mathbb C^{2\times 2}.
\end{align*}
Now, we recall the following:
\begin{lemma}\lemmalab{formal}
There exists a unique formal series solution of \eqref{invman0} of the form
\begin{align}\label{eq:m2n}
  (y_2,z_2) = \sum_{\alpha=1}^\infty  m_{2,\alpha}( x_2)\epsilon^\alpha,
\end{align}
with $ m_{2,\alpha}:\mathbb C\rightarrow \mathbb C^2$ real-analytic for all $\alpha\in \mathbb N$.
\end{lemma}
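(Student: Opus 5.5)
Substitute the ansatz \eqref{m2n} into \eqref{invman0}, expand in powers of $\epsilon$, and solve order by order. The key structural fact is that the left-hand side of \eqref{invman0} has $\Omega$ at order $\epsilon^0$, and $\Omega$ is invertible ($\Omega^{-1}=-\Omega$). The derivative term on the right carries an explicit factor $\epsilon$. Each order therefore gives an algebraic equation for $m_{2,\alpha}$, not a differential one.

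To make the recursion explicit, note first that $G_2,H_2$ depend on $(y_2,z_2)$ only through $\epsilon y_2,\epsilon z_2$. Since $(y_2,z_2)=\mathcal O(\epsilon)$ along the ansatz, the arguments $\epsilon y_2,\epsilon z_2$ are $\mathcal O(\epsilon^2)$. Hence $\epsilon G_2(x_2,\epsilon y_2,\epsilon z_2,\epsilon)$, expanded in $\epsilon$, has coefficient at order $\epsilon^\alpha$ equal to $G_2(x_2,0,0,0)$ for $\alpha=1$. For $\alpha\ge 2$ it is a polynomial in $m_{2,1},\dots,m_{2,\alpha-2}$ whose coefficients are derivatives of $G_2$ at $(x_2,0,0,0)$. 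The same holds for $H_2$ and $F_2$.

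At order $\epsilon^1$ this gives
\begin{align*}
m_{2,1}(x_2) = -\Omega^{-1}\begin{pmatrix} G_2(x_2,0,0,0)\\ H_2(x_2,0,0,0)\end{pmatrix}.
\end{align*}
At order $\epsilon^\alpha$, $\alpha\ge 2$, it gives
\begin{align*}
\Omega m_{2,\alpha} = -b(-x_2+\sigma)m_{2,\alpha-1} + (x_2^2-1)m_{2,\alpha-1}' + R_\alpha(x_2),
\end{align*}
where $R_\alpha$ collects the contributions of $G_2,H_2$ and of the terms $a\epsilon^2(y_2^2+z_2^2)+\epsilon F_2$ multiplying $\frac{d}{dx_2}(y_2,z_2)$. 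Each of these involves only $m_{2,\beta}$ and $m_{2,\beta}'$ with $\beta\le\alpha-1$ (indeed $\beta\le \alpha-2$ for the nonlinear parts). Applying $\Omega^{-1}$ determines $m_{2,\alpha}$ uniquely, and induction on $\alpha$ gives both existence and uniqueness of the formal series.

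For the regularity claim, the only point needing care is that $m_{2,\alpha}$ is defined on all of $\mathbb C$. From $W_2(x_2,\epsilon y_2,\epsilon z_2,\epsilon)=\epsilon^{-3}W(\epsilon x_2,\epsilon^2 y_2,\epsilon^2 z_2,\epsilon)$, the Taylor coefficient of $W_2$ at order $\epsilon^k$ (with $y_2,z_2$ fixed) is a polynomial in $(x_2,y_2,z_2)$. It gathers the finitely many Taylor coefficients of $W$ of weighted degree $k+3$, and these are real since $W$ is real-analytic. Equivalently, every $\epsilon$-coefficient of $G_2,H_2,F_2$ and their derivatives at $(x_2,0,0,0)$ is a real polynomial in $x_2$. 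Since the recursion only involves differentiation, products, and multiplication by the real constant matrix $\Omega^{-1}$, induction shows each $m_{2,\alpha}$ is a polynomial in $x_2$ with real coefficients. In particular each $m_{2,\alpha}$ is entire and real-analytic.

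I expect no genuine obstacle. The main bookkeeping step is checking that the nested $\epsilon$-dependence of $G_2(x_2,\epsilon y_2,\epsilon z_2,\epsilon)$ produces at order $\epsilon^\alpha$ only lower-order $m_{2,\beta}$, which the $\epsilon$-prefactors ensure. The series is generally divergent, which is consistent with the beyond-all-orders splitting, but only formal existence is claimed here.
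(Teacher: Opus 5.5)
Your proof is correct, but it takes a more explicit route than the paper. The paper does not carry out the recursion. It uses \eqref{polydisc} to note that for every $\varrho>0$ the functions $F_2,G_2,H_2$ are real-analytic on $B_\varrho^3\times(-\epsilon_0,\epsilon_0)$ for some $\epsilon_0=\epsilon_0(\varrho)$. It then invokes the general formal slow-manifold result \cite[Proposition 2.1]{de2020a}, which needs only analyticity and an invertible fast linear part (here $\Omega$), to get $m_{2,\alpha}$ analytic on $B_\varrho$. Entireness follows from uniqueness and the arbitrariness of $\varrho$.

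You instead solve the order-by-order equations directly via $\Omega^{-1}=-\Omega$. You get entireness from the weighted-homogeneous structure of the scaling: the formal coefficients only see finitely many Taylor coefficients of $F,G,H$ at the origin, so each $m_{2,\alpha}$ is a real polynomial in $x_2$. Tracking degrees through your recursion even gives $\deg m_{2,\alpha}\le \alpha+2$.

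Your version is self-contained and yields a sharper conclusion. It also makes the later identification $m_{2,\alpha}(x_2)=m_{1,\alpha}(x_2^{-1})x_2^{\alpha+2}$ with $m_{1,\alpha}$ analytic at $\epsilon_1=0$ (Lemma \ref{lem:En1}) transparent. The paper's citation is shorter and does not rely on the polynomial structure of the rescaled vector field. It therefore applies verbatim to general analytic slow-fast systems with invertible fast linearization, and the cited reference also supplies Gevrey-type control, which is not needed here.
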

\begin{proof}
 By \eqref{polydisc}, we have that for any $\varrho>0$, there is an $\epsilon_0>0$ such that  $F_2$, $G_2$ and $H_2$ are real-analytic on $(x_2,y_2,z_2,\epsilon) \in B_\varrho^3 \times (-\epsilon_0,\epsilon_0)$. Here $B_\varrho^3\subset \mathbb C^3$ is the ball of radius $\varrho>0$ centered at the origin. But then \cite[Proposition 2.1]{de2020a} gives the existence and uniqueness of the formal series \eqref{m2n} with $ m_{2,\alpha}:B_\varrho \rightarrow \mathbb C^2$ real-analytic for all $\alpha\in \mathbb N$ and any $\varrho>0$. The result therefore follows.
\end{proof}

We now fix any compact neighborhood $\mathcal X_{2}^-\subset \mathbb C$ of $x_2=-1$ within its basin of attraction for the forward flow of \eqref{reduced}. We similarly fix any compact neighborhood $\mathcal X_2^+\subset \mathbb C$ of $x_2=1$ within its basin of attraction for the backward flow of \eqref{reduced}. See \figref{reduced}. Then we have the following:

\begin{proposition}\proplab{unstable2}
 There is an $\epsilon_0>0$ small enough, such that the local unstable and stable manifolds, $\mathbf{W}_{\mathrm{loc}}^u(E^+(\epsilon))$ and $\mathbf{W}_{\mathrm{loc}}^s(E^-(\epsilon))$, are graphs over $x_2\in \mathcal X_{2}^{\pm}$, respectively:
 \begin{align*}
(y_2,z_2) &= m^{\pm}_2(x_2,\epsilon),\quad x_2\in \mathcal X_{2}^{\pm},
 \end{align*}
 with $\epsilon (Y_2^{\pm} ,Z_2^{\pm}) = m_2^{\pm}(X_2^{\pm},\epsilon)$, for any $\epsilon \in(0,\epsilon_0)$.
Each $m_2^{\pm}(\cdot,\epsilon):\mathcal X_2^{\pm}\rightarrow \mathbb C^2$ is real-analytic and $C^\infty$ with respect to $\epsilon\in [0,\epsilon_0)$. In particular, the functions $m_2^{\pm}$ are both asymptotic to the formal series \eqref{m2n} as $\epsilon\rightarrow 0$:
 \begin{align}\eqlab{expansion}
     m^{\pm}_2(x_2,\epsilon) \sim \sum_{\alpha=1}^\infty m_{2,\alpha}(x_2) \epsilon^\alpha,
 \end{align}
 uniformly with respect to $x_2\in \mathcal X_2^{\pm}$.
\end{proposition}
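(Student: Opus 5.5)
We prove the statement for $\mathbf{W}^u_{\mathrm{loc}}(E^+(\epsilon))$ over $\mathcal X_2^+$; the stable case over $\mathcal X_2^-$ follows by reversing time. We view \eqref{model2} as a slow-fast system whose critical manifold $\{y_2=z_2=0\}$ is normally elliptic. Standard Fenichel theory therefore does not apply directly. Instead we work with the invariance equation \eqref{invman0} as a singularly perturbed linear-plus-nonlinear problem along the slow flow. Write $(y_2,z_2)=\sum_{\alpha=1}^{N}m_{2,\alpha}(x_2)\epsilon^\alpha+\epsilon^{N}w$. Here the $m_{2,\alpha}$ come from \lemmaref{formal}, and $N$ is arbitrary. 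Substituting into \eqref{invman0} gives an equation for $w$ of the form
\begin{align*}
\epsilon\,(x_2^2-1+\mathcal O(\epsilon))\,\frac{dw}{dx_2}=\left(\Omega+\epsilon b(-x_2+\sigma)\operatorname{Id}+\mathcal O(\epsilon^2)\right)w+\mathcal O(\epsilon)+\epsilon^{N}\mathcal N(x_2,w,\epsilon),
\end{align*}
where the inhomogeneous term is small (of order $\epsilon$ after truncation) and analytic on a complex neighborhood of $\mathcal X_2^+$.

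First we construct the local unstable manifold near $E^+(\epsilon)$. By \lemmaref{Epm}, $E^+(\epsilon)$ is a hyperbolic saddle-focus. Its one-dimensional unstable direction is the slow one, with eigenvalue $\approx 2\epsilon$, and its two-dimensional stable directions have eigenvalues $\approx -\epsilon b(1-\sigma)\pm i$. By \eqref{bass} these have negative real part of order $\epsilon$. The analytic stable/unstable manifold theorem yields an analytic $\mathbf{W}^u_{\mathrm{loc}}$, but only in a neighborhood of size depending on $\epsilon$. To get uniform control we rescale. With slow time $t$ and the unstable direction being the slow direction, the graph over $x_2$ is selected as the unique solution of the $w$-equation that stays bounded as $x_2\to X^+(\epsilon)$ along backward slow orbits.

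Second, we extend the solution to all of $\mathcal X_2^+$. We solve for $w$ by a fixed point of the variation-of-constants formula. Integration runs along backward orbits of the reduced flow \eqref{reduced}, from $X^+(\epsilon)$ to any $x_2\in\mathcal X_2^+$; by the choice of $\mathcal X_2^+$ these orbits lie in the basin of $x_2=1$. Along such an orbit, the fundamental matrix of the linear part is $\mathrm e^{\Omega t/\epsilon}$ times a factor $\exp\left(\int b(-x_2+\sigma)\,dt\right)$ that is bounded uniformly in $\epsilon$. The key estimate is that the propagator is uniformly bounded in $\epsilon$: the fast rotation is unitary, and the slow part is $\mathcal O(1)$ on compact time intervals. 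Near $X^+$ the contraction comes from $b(1-\sigma)>0$. A contraction argument in a space of bounded analytic functions on a complex neighborhood of $\mathcal X_2^+$ then gives $w=\mathcal O(1)$ uniformly, and hence a remainder of order $\epsilon^{N}$. Since $N$ is arbitrary, this gives \eqref{expansion}. Uniqueness of the bounded solution identifies the graph with $\mathbf{W}^u_{\mathrm{loc}}$, and reality follows from real-analyticity of the equation.

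Third, we establish $C^\infty$ smoothness in $\epsilon\in[0,\epsilon_0)$, which I expect to be the main obstacle. The fast oscillation $\mathrm e^{\Omega t/\epsilon}$ makes direct differentiation in $\epsilon$ produce factors $\epsilon^{-2}$. To handle this, we use that the remainder after $N$ terms is $\mathcal O(\epsilon^N)$ with all $x_2$-derivatives controlled by Cauchy estimates on a slightly smaller domain. Differentiating the fixed-point equation $k$ times in $\epsilon$ loses at most $\epsilon^{-2k}$. Choosing $N>2k+k'$ then shows $\partial_\epsilon^k$ of $m_2^+$ minus its truncated series is $\mathcal O(\epsilon^{k'})$. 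This yields $C^\infty$ extension to $\epsilon=0$, with Taylor coefficients $m_{2,\alpha}$. A subtle point is that the orbits from $X^+$ take infinite slow time. We therefore handle a fixed neighborhood of $X^+(\epsilon)$ separately, using the hyperbolic (exponentially contracting in slow time) structure there, and glue it to the finite-time region.
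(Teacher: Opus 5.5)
Your architecture matches the paper's: subtract a truncation of \eqref{m2n}, solve for the remainder by variation of constants along slow orbits from the equilibrium, bound the propagator uniformly in $\epsilon$ using that $\exp(\epsilon^{-1}\Omega t)$ is unitary and that the equilibrium is weakly hyperbolic, and get $C^\infty$-smoothness because each $\epsilon$-derivative costs at most $\epsilon^{-2}$. The gap is in what ``integration along backward slow orbits'' means.

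The $x_2$-component of \eqref{model2} depends on $(y_2,z_2)$ through $a\epsilon^2(y_2^2+z_2^2)+\epsilon F_2$. So the coefficient $\epsilon(x_2^2-1+\mathcal O(\epsilon))$ of $dw/dx_2$ in your remainder equation contains $w$ itself. The equation is therefore quasilinear, and its singular point (where this coefficient vanishes) moves with the unknown. There is no fixed family of characteristics along which to write a variation-of-constants formula for $w$ alone. Using the orbits of \eqref{reduced} instead does not help. They converge to $1$, not to $X^+(\epsilon)$, and the discrepancy leaves a term $\epsilon\,\mathcal O(\epsilon)\,dw/dx_2$ on the right-hand side. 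That term loses a derivative and is singular at the equilibrium, so a contraction on bounded analytic functions of $x_2$ does not close. Your fallback, treating a fixed neighbourhood of $X^+(\epsilon)$ separately, presupposes exactly the $\epsilon$-uniform local unstable manifold that you yourself noted the standard theorem does not provide. ``We rescale'' only sends you back to the same $w$-equation.

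The paper closes this with a preparatory step you are missing. First translate the equilibrium to the origin, $x_2=X^\pm(\epsilon)+\widetilde x_2$ and $(y_2,z_2)=\epsilon(Y^\pm,Z^\pm)+(\widetilde y_2,\widetilde z_2)$. Then blow it up via $\zeta_2=\widetilde x_2^{-1}(\widetilde y_2,\widetilde z_2)$. Now $\widetilde x_2=0$ forces $(\widetilde y_2,\widetilde z_2)=0$, so the $x_2$-component vanishes there for every $\zeta_2$. It therefore factors as $\widetilde x_2\,\widetilde Q_2(\widetilde x_2,\epsilon)(1+\mathcal O(\epsilon^2))$, with $\widetilde Q_2\neq 0$ on the relevant set since $Q_2'(\pm1)\neq 0$ and $\mathcal X_2^\pm$ does not contain the other root of $Q_2$. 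Dividing the vector field by the nonvanishing factor $1+\mathcal O(\epsilon^2)$ is harmless for invariant manifolds, and it gives \eqref{x2zeta2}. There the slow equation $\dot{\widetilde x}_2=\widetilde Q_2(\widetilde x_2,\epsilon)\widetilde x_2$ is independent of $\zeta_2$ and has its rest point exactly at the equilibrium. The normal rate also becomes $W_{21}(0,0)=(b(1+\sigma)+2)\operatorname{Id}$ for $E^-$, and $-(b(1-\sigma)+2)\operatorname{Id}$ for $E^+$. In the original coordinates the $x_2$-component vanishes at $\widetilde x_2=0$ only on the fiber through the equilibrium, so without the factor $\widetilde x_2$ the ratio you would divide by is singular there.

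After this step, the dichotomy bound \eqref{statetransition} holds on the whole half-line. A single Lyapunov--Perron fixed point \eqref{fixpoint} along each decoupled orbit then gives existence, uniqueness, identification with $\mathbf W^{u}_{\mathrm{loc}}(E^+(\epsilon))$ (resp. $\mathbf W^{s}_{\mathrm{loc}}(E^-(\epsilon))$), and $\epsilon$-uniformity all at once, with no separate local analysis or gluing. With it inserted, the rest of your argument, including the smoothness count with $N$ large relative to $2k$, goes through as in the paper.
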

\begin{proof}
The graph form of the invariant manifolds follows from standard hyperbolic theory, but due to the singular nature of $\epsilon\to 0$, this only gives local and nonuniform statements. Nevertheless, to prove the statement, we will follow the standard proof of the stable manifold theorem through the variation of constant formulation, after some initial preparation of the equations. We focus on $m_2^-$ and the stable manifold of $E^-$. The case of $m_2^+$ is similar and details of this case are therefore left out.

To prepare the equations, we first define
%
\begin{align*}
 \begin{cases}
  x_2 =:  X_2^-(\epsilon)+\widetilde x_2,\\
  y_2 =:\epsilon Y_2^-(\epsilon)+\widetilde y_2,\\
  z_2 = :\epsilon Z_2^-(\epsilon)+\widetilde z_2,
 \end{cases}
\end{align*}
recall \lemmaref{Epm},
and
\begin{align*}
\zeta_2:= \widetilde x^{-1} \begin{pmatrix}
                          \widetilde y_2\\
                          \widetilde z_2
                         \end{pmatrix}.
                         \end{align*}
                         A simple calculation shows that $(\widetilde x_2,\zeta_2)\mapsto (x_2,y_2,z_2)$ brings \eqref{model2} into the following form by pull-back
\begin{equation}\label{eq:x2zeta2}
\begin{aligned}
 \dot{\widetilde x}_2 &= \widetilde Q_2(\widetilde x_2,\epsilon) \widetilde x_2,\\
 \epsilon \dot \zeta_2 &=\left(\Omega + \epsilon W_{21}(\widetilde x_2,\epsilon)\right) \zeta_2 + \epsilon W_{20}(\widetilde x_2,\epsilon)+\epsilon^2W_{22}(\widetilde x_2,\zeta_2,\epsilon)(\zeta_2,\zeta_2),
\end{aligned}
\end{equation}
recall \eqref{Omegadefn},
after dividing the right hand side by a real-analytic quantity $1+\mathcal O(\epsilon^{2})$, uniformly with respect to $-1+\widetilde x_2\in \mathcal X_2^-$, $\vert \zeta_2\vert \le c$, with $c>0$ fixed, and all $0\le\epsilon\ll 1$; notice that such division is justified by the fact that we are interested in invariant manifold solutions $\zeta_2 = \zeta_2(\widetilde x_2,\epsilon)$,  satisfying the corresponding invariance equation:
\begin{equation}\eqlab{invman}
\begin{aligned}
 \left(\Omega + \epsilon W_{21}(\widetilde x_2,\epsilon)\right) \zeta_2 + \epsilon  W_{20}(\widetilde x_2,\epsilon)+\epsilon^2W_{22}(\widetilde x_2,\zeta_2,\epsilon)(\zeta_2,\zeta_2) =\epsilon \widetilde Q_2(\widetilde x_2,\epsilon) \widetilde x_2 \frac{\partial \zeta_2}{\partial \widetilde x_2}.
\end{aligned}
\end{equation}
In the derivation of \eqref{x2zeta2}, we have Taylor-expanded the right hand side of the $\zeta_2$-equation around $\zeta_2=0$ and used that $Q_2(x_2):=x_2^2-1\ne 0$ for all $x_2\in \mathcal X_2^-\setminus \{-1\}$ and that $Q_2'(-1)=-2\ne 0$.

All quantities in \eqref{x2zeta2} are real-analytic. In particular, we have
\begin{align*}
 \widetilde Q_2(\widetilde x_2,0) = \widetilde x_2^{-1} Q_2(-1+\widetilde x_2)=-2+\widetilde x_2,
\end{align*}
and
\begin{align*}
 W_{21}(\widetilde x_2,0) =
\left( b(1-\widetilde x_2+\sigma) + 2-\widetilde x_2\right) \operatorname{Id},
\end{align*}
specifically
\begin{align}\label{eq:cond}
  W_{21}(0,0) = (b(1+\sigma)+2)\operatorname{Id},
\end{align}
with $b(1+\sigma)+2>0$. Finally, $W_{22}(\widetilde x_2,\zeta_2,\epsilon)(\cdot,\cdot)$ is bilinear.



As in \lemmaref{formal}, we have a unique formal series solution of \eqref{invman} of the form
\begin{align}\label{eq:zeta2n}
  \zeta_2 = \sum_{\alpha=1}^\infty \zeta_{2,\alpha}(\widetilde x_2)\epsilon^\alpha,
\end{align}
with $\zeta_{2,\alpha}:\mathbb C\rightarrow \mathbb C$ real-analytic for all $\alpha\in \mathbb N$. By uniqueness of the formal series, \eqref{zeta2n} corresponds to \eqref{m2n} upon the change of coordinates.
We therefore fix $N\in \mathbb N$ and introduce ${\widetilde \zeta}_2$ by
\begin{align}
 \zeta_2 = \sum_{\alpha=1}^{2N} \zeta_{2,\alpha}(\widetilde x_2)\epsilon^\alpha+\epsilon^N \widetilde{ \zeta}_2.\eqlab{tildezeta2defn}
\end{align}
This brings the equations into the final form
\begin{equation}\label{eq:x2zeta2f}
\begin{aligned}
 \dot{\widetilde x}_2 &= \widetilde Q_2(\widetilde x_2,\epsilon) \widetilde x_2,\\
 \epsilon \dot{\widetilde{\zeta}}_2 &=\left(\Omega + \epsilon \widetilde W_{21}^N (\widetilde x_2,\epsilon)\right) \widetilde \zeta_2 + \epsilon^{N} \widetilde W_{20}^N(\widetilde x_2,\widetilde{ \zeta}_2,\epsilon),
\end{aligned}
\end{equation}
with $\widetilde W_{21}^N(0,0)=(b(1+\sigma)+2)\operatorname{Id}$. Here $\widetilde W_{21}^N$ and $\widetilde W_{20}^N$ are real-analytic. This follows from a simple calculation.

We will prove the statement by working on \eqref{x2zeta2f}. In particular, we define $\widetilde{\mathcal X}_2^-\subset \mathbb C$ by $\widetilde x_2\in \widetilde{\mathcal X}_2^-$ if and only if $-1+\widetilde x_2\in \mathcal X_2^-$ and look for invariant manifold solutions $\widetilde \zeta_2=\widetilde \zeta_2(\widetilde x_2,\epsilon)$ as graphs over $\widetilde x_2\in \mathcal X_2^-$.

We therefore take $\widetilde x_{20}\in \widetilde{\mathcal X}_2^-$ and consider the solution $\widetilde x_2(t,\widetilde x_{20},\epsilon)$ of the initial value problem
$$\dot{\widetilde x}_2(t)=\widetilde Q_2(\widetilde x_2(t),\epsilon)\widetilde x_2(t),\quad \widetilde x_2(0)=\widetilde x_{20}.$$ By assumption, $\mathcal X_2^-$ is compact and a subset of the basin of attraction of the hyperbolic point $x_2=-1$ for the forward flow of \eqref{reduced} for $\epsilon=0$. We therefore conclude that $\widetilde x_2(t,\widetilde x_{20},\epsilon)\rightarrow 0$ for $t\rightarrow \infty$, uniformly for $\widetilde x_{20}\in \mathcal X^-$, $0\le \epsilon\ll 1$.

We then solve for $\widetilde \zeta_{2}(t,\widetilde x_{20},\epsilon)$, $t\ge 0$, such that $(\widetilde x_{20},\widetilde \zeta_{2}(0,\widetilde x_{20},\epsilon))$ is a point on the stable manifold through the usual fixed-point formulation of \eqref{x2zeta2f}:
\begin{align}\eqlab{fixpoint}
\widetilde \zeta_2(t,\widetilde x_{20},\epsilon) = -\int_{t}^\infty \widetilde{\mathcal M}_2^N(t,\widetilde x_{20},\epsilon))\widetilde{\mathcal M}_2^N(s,\widetilde x_{20},\epsilon)^{-1} \epsilon^N \widetilde W_{20}^N(\widetilde x_2(s,\widetilde x_{20},\epsilon), \widetilde \zeta_{2}(s,\widetilde x_{20},\epsilon),\epsilon) ds,
\end{align}
for $t\ge 0$,
see e.g. \cite[Theorem 5.3]{meiss2007a}. Here $\widetilde{\mathcal M}_2^N(t,\widetilde x_{20},\epsilon)$ is the fundamental matrix associated with the linear problem
\begin{align*}
 \epsilon \dot{\widetilde{\zeta}}_2(t) &=\left(\Omega + \epsilon \widetilde W_{21}^N(\widetilde x_2(t,\widetilde x_{20},\epsilon),\epsilon)\right) \widetilde \zeta_2(t),
\end{align*}
 with $\widetilde{\mathcal M}_2^N(0,\widetilde x_{20},\epsilon)=\operatorname{Id}$,
which by \eqref{cond} has exponential growth for $t\to \infty$. Writing
\begin{align}\eqlab{M2exp}
\widetilde{\mathcal M}_2^N(t,\widetilde x_{20},\epsilon)=:\exp({\epsilon^{-1}\Omega t}) \widehat{ \mathcal M}_2^N(t,\widetilde x_{20},\epsilon),
\end{align}we see that $\widehat{\mathcal M}_N(t,\widetilde x_{20},\epsilon)$ is the fundamental matrix associated with the regular problem:
\begin{align*}
  \dot{\widehat{\zeta}}_2(t) &=  \widetilde W_{21}^N(\widetilde x_2(t,\widetilde x_{20},\epsilon),\epsilon)\widehat \zeta_2(t),
\end{align*}
with $\widehat{\mathcal M}_2^N(0,\widetilde x_{20},\epsilon)=\operatorname{Id}$.
By assumption, we have the exponential convergence for $t\to \infty$:
\begin{align*}
 \widetilde W_{21}^N(\widetilde x_2(t,\widetilde x_{20},0),0) \to (b(1+\sigma)+2))\operatorname{Id},
\end{align*}
cf. \eqref{cond},  and from this we directly obtain
\begin{align}\eqlab{statetransition}
 \Vert \widetilde{\mathcal M}_2^N(t,\widetilde x_{20},\epsilon)\widetilde{\mathcal M}_2^N(s,\widetilde x_{20},\epsilon)^{-1}\Vert \le c_1^{-1} \e^{c_2 (t-s)},
\end{align}
see e.g. \cite[Proposition 1, p. 34]{coppel1978a},
for $c_1>0,c_2>0$ both small enough, for any $s\ge t$, $\widetilde x_2\in \widetilde{\mathcal X}_2^-$, and all $0<\epsilon\ll 1$. Here $\Vert \cdot\Vert$ denotes the operator norm. To obtain \eqref{statetransition}, we have also used that $\Vert \exp({\epsilon^{-1}\Omega t})\Vert \equiv 1$, recall \eqref{Omegadefn}.

The existence of a unique bounded solution $t \mapsto \widetilde \zeta_{2}(t,\widetilde x_{20},\epsilon), t \ge 0$, of the fixed-point equation \eqref{fixpoint}, then proceeds completely analogously to \cite[Theorem 5.3]{meiss2007a} for any $0<\epsilon\ll 1$. The desired graph is given by $\widetilde \zeta_2=\widetilde \zeta_{2}(0,\widetilde x_{2},\epsilon)=\mathcal O(\epsilon^N)$, uniformly over $\widetilde x_2\in \widetilde{\mathcal X}_2^-$, for all $0\le \epsilon\ll 1$.
%
Next, for the smoothness with respect to $\epsilon$, we first note that the stable manifold is unique, being independent of $N$, and smooth (even analytic) for $\epsilon\in (0,\epsilon_0)$ by standard theory. The boundedness of $\widetilde \zeta_2$ for any $N$ and $0<\epsilon\ll 1$ then implies (by \eqref{tildezeta2defn}) that the invariant manifold has an asymptotic expansion with respect to $\epsilon\to 0$, uniformly with respect to $x_2\in \mathcal X_2^-$. Finally, for any $k\in \mathbb N$ there is an $N\in \mathbb N$ ($N\ge 2k+1$ will do) such that the right hand side of \eqref{fixpoint} is $C^k$-smooth with respect to $\epsilon\in [0,\epsilon_0)$; notice that we lose two powers of $\epsilon$ upon differentiation due to \eqref{M2exp}.  It follows that the invariant manifold is $C^\infty$-smooth with respect to $\epsilon\in [0,\epsilon_0)$.
\end{proof}

\section{Blowup}\seclab{blowup}
Since the unstable and stable manifolds have the same asymptotic expansions (cf. \eqref{expansion}), we cannot determine the separation of the invariant manifolds by working in compact subsets of $x_2\in \mathbb C$. Notice that $x_2=\mathcal O(1)$ correspond to $x=\mathcal O(\epsilon)$ as $\epsilon\to 0$ by \eqref{buhopf}. We therefore wish to extend the unstable and stable manifolds to $x=\mathcal O(1)$ and compare the manifolds with the unperturbed manifolds, recall \lemmaref{m0pm}. For this purpose, we view \eqref{x2y2z2} as a scaling chart ($\breve \epsilon=1$):
\begin{align}\eqlab{c2}
\breve \epsilon=1:\quad 
 \begin{cases}
  x = r_2 x_2\\
  y = r_2^2 y_2,\\
  z =r_2^2 z_2,\\
  \epsilon = r_2,
 \end{cases}
\end{align}
associated with the blowup transformation
\begin{align}
 r\ge 0,\,(\breve x,\breve y,\breve z,\breve \epsilon)\in\mathbb S^3 \mapsto \begin{cases}
                                           x = r\breve x\\
                                           y =r^2\breve y\\
                                           z =r^2\breve z,\\
                                           \epsilon =r \breve \epsilon
                                          \end{cases}.\eqlab{blowup}
\end{align}
We will therefore also augment $\dot \epsilon=0$ to \eqref{model}. 
%
%
To perform the extension, we use the $\breve x=1$-chart with associated coordinates $(r_1,y_1,z_1,\epsilon_1)$:
\begin{align}
\breve x=1:\quad 
\begin{cases}
                                           x = r_1,\\
                                           y =r_1^2 y_1,\\
                                           z =r_1^2z_1,\\
                                           \epsilon =r_1 \epsilon_1.
                                          \end{cases}\eqlab{c1}
                                          \end{align}
                                          The change of coordinates between $(x_2,y_2,z_2,r_2)$, see \eqref{c2}, and $(r_1,y_1,z_1,\epsilon_1)$, see \eqref{c1}, is well-defined for $x_2\ne 0$, $\epsilon_1\ne 0$, and given by
                                          \begin{align}\eqlab{cc}
                                           \begin{cases} r_1 = r_2 x_2,\\
                                           \epsilon_1=x_2^{-1},\\
                                           y_1 =x_2^{-2} y_2,\\
                                           z_1 =x_2^{-2} z_2.
                                           \end{cases}
                                          \end{align}
The advantage of working with \eqref{c1}, and the coordinates $(r_1,y_1,z_1,\epsilon_1)$, is that both results of \lemmaref{m0pm} and \propref{unstable2} are ``visible'' within compact subsets. In particular, \lemmaref{m0pm} is visible within $\epsilon_1=0$, see also \eqref{zeta1psi}, whereas \propref{unstable2} is visible for $r_1=\mathcal O(\epsilon)$, $\epsilon_1=\mathcal O(1)$ (cf. \eqref{cc}).
                                          \subsection{Analysis in the $\breve x=1$-chart}
                                          In the $\breve x=1$-chart, we apply the change of coordinates $(r_1,y_1,z_1,\epsilon_1)\mapsto (x,y,z,\epsilon)$, defined by \eqref{c1}, to the extended vector-field given by \eqref{model} and $\dot \epsilon =0$. This leads to the following system by pull-back
                                          \begin{equation}\eqlab{r1y1z1eps10}
                                          \begin{aligned}
                                           \dot r_1 &=r_1^2\left[ 1-\epsilon_1^2  + ar_1^2 (y_1^2+ z_1^2)+r_1 F_1(r_1,r_1y_1,r_1 z_1,\epsilon_1)\right],\\
                                                                            \dot y_1 &= r_1b(-1+\epsilon_1 \sigma) y_1+z_1+ r_1 G_1(r_1,r_1 y_1,r_1 z_1,\epsilon_1) \\
                                                                            &- 2r_1 \left[ 1-\epsilon_1^2  + ar_1^2(y_1^2+ z_1^2)+ r_1 F_1(r_1,r_1 y_1,r_1 z_1,\epsilon_1)\right]y_1,\\
                                                                            \dot z_1 &= r_1b(-1+\epsilon_1 \sigma) z_1-y_1+ r_1 H_1(r_1,r_1 y_1,r_1 z_1,\epsilon_1)\\
                                                                            &- 2r_1 \left[ 1-\epsilon_1^2  + ar_1^2(y_1^2+ z_1^2)+ r_1 F_1(r_1,r_1 y_1,r_1 z_1,\epsilon_1)\right]z_1,\\
                                                                            \dot \epsilon_1 &=-r_1\epsilon_1\left[1- \epsilon_1^2  + ar_1^2 (y_1^2+ z_1^2)+ r_1 F_1(r_1,r_1 y_1,r_1 z_1,\epsilon_1)\right],
                                                        \end{aligned}
                                                        \end{equation}
                                                        where $F_1,G_1,H_1$ are defined by
                                                        \begin{align*}
                                                         W_1(r_1,r_1 y_1,r_1 z_1,\epsilon_1): = r_1^{-3} W(r_1,r_1 r_1 y_1,r_1 r_1 z_1,r_1\epsilon_1),\quad W=F,G,H,
                                                        \end{align*}
each having analytic extensions to $r_1=0$. Moreover, by \eqref{Fexpansion} it follows that
\begin{align}\eqlab{FF}
 F_1(0,0,0,\epsilon_1)\equiv \FF.
\end{align}
Notice that within the invariant set $\epsilon_1=0$, \eqref{r1y1z1eps10} reduces to \eqref{r1y1z1}, possessing invariant manifolds of the graph forms
\begin{align*}
 (y_1,z_1)= \psi_{1,0}^{\pm} (r_1),
\end{align*}
defined over the sectors $r_1\in S^{\pm}$, recall \lemmaref{m0pm}. Within $r_1=0$, the line defined by $(y_1,z_1)=(0,0)$, $\epsilon_1\ge 0$, is an elliptic critical manifold  (for $\epsilon_1>0$ it corresponds to the critical manifold of the scaling chart: $\breve \epsilon=1$, recall \eqref{layer2}). 

Since we are interested in invariant manifolds, we can multiply the right hand side of \eqref{r1y1z1eps10} by the nonzero quantity
\begin{align*}
    \frac{1-\epsilon_1^2 +r_1 \FF}{\left[1-\epsilon_1^2  + ar_1^2(y_1^2+ z_1^2)+ r_1 F_1(r_1,r_1 y_1,r_1 z_1,\epsilon_1)\right]} =: 1+ \widetilde F_1(r_1,r_1 y_1,r_1 z_1,\epsilon_1),
\end{align*}
with
\begin{align}\eqlab{quantity}
    \widetilde F_1(r_1,r_1 y_1,r_1 z_1,\epsilon_1) = \mathcal O(r_1^2).
\end{align}
Here we have used \eqref{FF}. Then \eqref{r1y1z1eps10}  becomes
\begin{equation}\eqlab{r1y1z1eps1}
  \begin{aligned}
        \dot r_1 &=r_1^2\left( 1-\epsilon_1^2  + r_1 \FF\right),\\
 \dot \zeta_1 &= \left( \Omega+r_1 W_{11}(r_1,\epsilon_1)\right)\zeta_1 +r_1W_{10}(r_1,\epsilon_1)+r_1^2 W_{12}(r_1,\zeta_1,\epsilon_1)(\zeta_1,\zeta_1),\\
\dot \epsilon_1 &=-r_1\epsilon_1\left( 1-\epsilon_1^2  + r_1 \FF\right),
\end{aligned}
\end{equation}
 upon setting $\zeta_1=(y_1,z_1)^{\mathrm{T}}$,
 for some real-analytic functions $W_{1i}$, $i\in\{0,1,2\}$. In particular, we have
\begin{align*}
W_{11}(0,\epsilon_1) = (-2-b-\epsilon_1 \sigma ) \operatorname{Id}.
\end{align*}
This follows from a simple calculation.  In the derivation of \eqref{r1y1z1eps1}, we have Taylor-expanded the right hand side of the $\zeta_1$-equation around $\zeta_1=0$. Notice, in particular, that $$W_{12}(r_1,\zeta_1,\epsilon_1)(\cdot,\cdot),$$ is bilinear.

With slight abuse of notation, we will use $\dot{(\cdot})=\frac{d}{dt}$ in the following.

We now look for (formal) invariant manifold solutions of \eqref{r1y1z1eps1}  of the graph form $\zeta_1=\zeta_1(r_1,\epsilon_1)$ (which by virtue of the multiplication by the nonzero quantity \eqref{quantity} are also (formal) invariant manifolds of \eqref{r1y1z1eps10}).
They satisfy the PDE
\begin{equation}
\eqlab{PDE}\begin{aligned}
 &r_1^2\left( 1-\epsilon_1^2  + r_1 \FF\right) \frac{\partial \zeta_1}{\partial r_1}-r_1\epsilon_1 \left( 1-\epsilon_1^2  + r_1 \FF\right)\frac{\partial \zeta_1}{\partial \epsilon_1} \\
 &=\left(\Omega +r_1W_{11}(r_1,\epsilon_1)\right) \zeta_1+r_1W_{10}(r_1,\epsilon_1)+r_1^2 W_{12}(r_1,\zeta_1,\epsilon_1)(\zeta_1,\zeta_1),
\end{aligned}
\end{equation}
in the formal sense. In particular, we will (as in \cite[Section 4.3]{uldall2024a}) use two different notions of formal series: Formal series in $r_1$ with $\epsilon_1$-depending coefficients:
\begin{align}
 \zeta_1 = \sum_{\alpha=1}^\infty m_{1,\alpha}(\epsilon_1)r_1^\alpha,\eqlab{Eseries}
\end{align}
and formal series in $\epsilon_1$ with $r_1$-depending coefficients:
\begin{align}\eqlab{wepsexp0}
 \zeta_1 = \sum_{\alpha=0}^\infty \psi_{1,\alpha}(r_1)\epsilon_1^\alpha.
\end{align}
Since \eqref{PDE} is analytic, we can again expand and understand \eqref{PDE} as an equation for formal series in both cases.
In the case \eqref{Eseries}, the coefficients $m_{1,\alpha}$ will be analytic in a neighborhood $B_\xi\subset \mathbb C$, $\xi>0$, of $\epsilon_1=0$. Here $B_\xi$ denotes the open disc in $\mathbb C$ of radius $\xi>0$ centered at the origin. On the other hand, in the case \eqref{wepsexp0} the coefficients $\psi_{1,\alpha}$ will come in pairs being analytic over the local sectors $r_1\in S^{\pm}$, respectively.


\begin{lemma}\label{lem:En1}
Fix $\xi>0$ small enough. Then there exists a unique formal series solution of \eqref{PDE} of the form \eqref{Eseries}
where each $m_{1,\alpha}:B_\xi\rightarrow \mathbb C^2$, $\alpha\in \mathbb N$, is a uniquely determined real-analytic function. Here $\xi>0$ is small enough, but independent of $\alpha$.
\end{lemma}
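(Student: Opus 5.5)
We plan to substitute the series \eqref{Eseries} into \eqref{PDE} and collect powers of $r_1$. Since every term on the left of \eqref{PDE} carries at least one factor $r_1$, and the right-hand side has the form $\Omega\zeta_1$ plus terms of order $r_1$, the order-$r_1^\alpha$ equation will read
\begin{align*}
\Omega\, m_{1,\alpha}(\epsilon_1) = (\alpha-1)(1-\epsilon_1^2) m_{1,\alpha-1}(\epsilon_1) - \epsilon_1(1-\epsilon_1^2) m_{1,\alpha-1}'(\epsilon_1) + R_\alpha(\epsilon_1),
\end{align*}
where $R_\alpha$ collects $-W_{10}$ contributions, $W_{11}$ and $\FF$-terms acting on $m_{1,\alpha-1}, m_{1,\alpha-2}$, and the bilinear terms $W_{12}$ evaluated on lower-order coefficients. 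The $\FF$-terms enter through $r_1^3\FF\,\partial_{r_1}$ and $r_1^2\epsilon_1\FF\,\partial_{\epsilon_1}$. In particular, $R_\alpha$ is a polynomial expression in $m_{1,1},\dots,m_{1,\alpha-1}$ and their first derivatives, with coefficients given by the Taylor coefficients (in $r_1$) of $W_{10},W_{11},W_{12}$, all of which are analytic in $\epsilon_1\in B_\xi$.

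Because $\Omega$ is invertible (eigenvalues $\pm i$), the recursion uniquely determines $m_{1,\alpha}=\Omega^{-1}(\cdots)$. At $\alpha=1$ we get $m_{1,1}(\epsilon_1)=-\Omega^{-1}W_{10}(0,\epsilon_1)$. Uniqueness of the whole formal solution follows by induction, since each step is a nonsingular algebraic equation with no freedom. Note that no division by $\epsilon_1$ or by $1-\epsilon_1^2$ occurs, which is why no condition on $\epsilon_1$ arises beyond analyticity of the data.

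The only point needing care is that the domain $B_\xi$ does not shrink with $\alpha$, even though each step involves $\partial_{\epsilon_1}$. At the formal level this is harmless: analytic functions on the open disc $B_\xi$ have analytic derivatives on the same open disc. So we fix $\xi$ once, small enough that $W_{10},W_{11},W_{12}$ are analytic on $\{|r_1|<\xi\}\times\{|\zeta_1|<\xi\}\times B_\xi$ (possible by \eqref{polydisc}-type considerations for $F_1,G_1,H_1$). The induction then gives every $m_{1,\alpha}$ analytic on $B_\xi$, with $\xi$ independent of $\alpha$.

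For real-analyticity, we use that the coefficients of \eqref{PDE} are real for real arguments and $\Omega$ is real. By induction, $\overline{m_{1,\alpha}(\overline{\epsilon_1})}$ satisfies the same recursion, so uniqueness gives $m_{1,\alpha}(\overline{\epsilon_1})=\overline{m_{1,\alpha}(\epsilon_1)}$. We expect the main obstacle to be merely bookkeeping the expansion of the bilinear term and the $\FF$-corrections to confirm the triangular structure. No Gevrey-type estimates are needed, since only a formal statement is claimed.
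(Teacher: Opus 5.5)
Your proof is correct and takes essentially the paper's route. The paper rewrites \eqref{PDE} as $\zeta_1=\Omega^{-1}r_1(1-\epsilon_1^2)\bigl(r_1\partial_{r_1}\zeta_1-\epsilon_1\partial_{\epsilon_1}\zeta_1-R_1\bigr)$ and invokes the argument of \cite[Proposition 2.1]{de2020a}; this is the same triangular recursion you write out by hand, driven by the invertibility of $\Omega$ and the explicit $r_1$-factor. Your observations that the disc $B_\xi$ need not shrink at the formal level, and that real-analyticity follows from conjugation symmetry plus uniqueness, fill in details the paper leaves implicit.
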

\begin{proof}
 We write \eqref{PDE} in the form
 \begin{align}\eqlab{PDE0}
  \zeta_1 =  \Omega^{-1} r_1 (1-\epsilon_1^2) \left(r_1\frac{\partial \zeta_1}{\partial r_1}-\epsilon_1 \frac{\partial \zeta_1}{\partial \epsilon_1}- R_1(r_1, \zeta_1,\epsilon_1)\right),
\end{align}
for some locally defined real-analytic function $R_1$.
This follows from a simple calculation. 
The result can then be proven as the proof of \cite[Proposition 2.1.]{de2020a} (by the virtue of the $r_1$-factor on the right hand side). 
\end{proof}

\begin{remark}
 Upon using the change of coordinates \eqref{cc}, \eqref{Eseries} becomes
 \begin{align*}
  \zeta_2 &=\sum_{\alpha=1}^\infty m_{1,\alpha}(x_2^{-1})x_2^{\alpha+2} r_2^\alpha,
 \end{align*}
and therefore by the uniqueness of the series, we have that 
\begin{align*}
 m_{2,\alpha}(x_2)=  m_{1,\alpha}(x_2^{-1})x_2^{\alpha{+}2},\quad x_2\ne 0,
\end{align*}
for all $\alpha\in \mathbb N$,
see \eqref{expansion}.
\end{remark}

We recall the definition of $S^{\pm}(\delta,\chi)$:
\begin{align}
 S^{\pm}(\delta,\chi)=\left\{x\in \mathbb C\,:\,0< \vert x\vert< \delta\, \mbox{ and }\, \vert \operatorname{Arg}(\pm x)\vert<  \frac{\pi+\chi}{2}\right\},\nonumber
\end{align}
with $0<\chi<\pi$ and $\delta>0$. 

\begin{lemma}\label{lem:Pnl}
Fix $\delta>0$ small enough. Then there exists two (corresponding to $\pm$) formal series solution of \eqref{PDE} of the form \eqref{wepsexp0} with $\psi_{1,\alpha}=\psi_{1,\alpha}^{\pm}$:
\begin{align}\label{eq:wepsexp}
 \zeta_1 = \sum_{\alpha=0}^\infty \psi^{\pm}_{1,\alpha}(r_1)\epsilon_1^\alpha,
\end{align}
where each $\psi^{\pm}_{1,\alpha}:S^{\pm}\rightarrow \mathbb C$, $\alpha\in\mathbb N_0$, is real-analytic on the local sector $$S^{\pm}=S^{\pm}(\delta,\chi),$$ respectively. Moreover, the two series are uniquely determined and for each fixed $\alpha\in \mathbb N_0$, $\psi^{\pm}_{1,\alpha}$ is the $1$-sum of a Gevrey-$1$ series
\begin{align*}
 \psi^{\pm}_{1,\alpha}(r_1) \sim_{1} \sum_{\beta=1}^{\infty} \psi_{1,\alpha,\beta }r_1^\beta,
\end{align*}
in the direction defined by $S^{\pm}$, respectively. In particular, the Gevrey-$1$ series on the right hand side is independent of $\pm$.


\end{lemma}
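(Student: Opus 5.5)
The plan is to substitute \eqref{wepsexp} into \eqref{PDE}, expand everything in powers of $\epsilon_1$, and solve order by order in $\alpha$. At order $\epsilon_1^0$ the equation is exactly the invariance equation of the $\epsilon_1=0$ subsystem,
\begin{align*}
r_1^2(1+r_1\FF)\frac{d\psi_{1,0}}{dr_1} = \left(\Omega + r_1 W_{11}(r_1,0)\right)\psi_{1,0} + r_1 W_{10}(r_1,0) + r_1^2 W_{12}(r_1,\psi_{1,0},0)(\psi_{1,0},\psi_{1,0}).
\end{align*}
This is the generalized saddle-node already treated in the proof of \lemmaref{m0pm}. We therefore take $\psi_{1,0}^{\pm}$ to be the functions in \eqref{zeta1psi}. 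They are unique and real-analytic on $S^{\pm}$, and they are $1$-sums of the common Gevrey-$1$ series, by \cite[Theorem 3]{bonckaert2008a}.

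For $\alpha\ge 1$, the $\epsilon_1^\alpha$-coefficient gives a linear inhomogeneous equation for $\psi_{1,\alpha}$:
\begin{align*}
r_1^2(1+r_1\FF)\frac{d\psi_{1,\alpha}}{dr_1} = \left(\Omega + r_1 A_\alpha(r_1)\right)\psi_{1,\alpha} + r_1 B_\alpha(r_1),
\end{align*}
with
\begin{align*}
A_\alpha = W_{11}(r_1,0)+\alpha(1+r_1\FF) + 2r_1 W_{12}(r_1,\psi_{1,0},0)(\psi_{1,0},\cdot)+\cdots .
\end{align*}
Here the term $-r_1\epsilon_1(1-\epsilon_1^2+r_1\FF)\partial_{\epsilon_1}$ contributes $\alpha r_1(1+r_1\FF)\psi_{1,\alpha}$ plus lower-order pieces. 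The inhomogeneity $B_\alpha$ is a polynomial expression in $\psi_{1,0},\ldots,\psi_{1,\alpha-1}$ and in the Taylor coefficients of $W_{1i}$ in $\epsilon_1$. By induction, $B_\alpha$ is analytic on $S^\pm$ and is the $1$-sum of a Gevrey-$1$ series that does not depend on $\pm$. This uses that $1$-summable series in a given direction form a differential algebra and that sums commute with products and composition with analytic functions. The extra $r_1$ multiplying $B_\alpha$ comes from the fact that every term of \eqref{PDE} other than $\Omega\zeta_1$ carries a factor $r_1$.

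Each such equation is again a linear system of Poincar\'e rank $1$, and its leading matrix $\Omega$ is invertible with eigenvalues $\pm i$. Since $\Omega$ is invertible, there is a unique formal power series solution, built recursively as in the proof of Lemma~\ref{lem:En1}, with $\psi_{1,\alpha}=\mathcal O(r_1)$. This series is Gevrey-$1$ by standard estimates, or by \cite{bonckaert2008a} for linear systems. Its singular directions (Stokes directions) are governed by the eigenvalues $\pm i$ of $\Omega$, so they lie along $\operatorname{Arg} r_1=\pm\pi/2$, exactly as for $\psi_{1,0}$. The opening of $S^{\pm}$ is less than $2\pi$ and the sectors avoid the opposite real direction, so the series is $1$-summable in the directions covered by $S^\pm$. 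Its sum $\psi_{1,\alpha}^\pm$ is then the unique solution on $S^\pm$ that is asymptotic to the series with $\psi_{1,\alpha}(0)=0$.

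In Borel terms, the step reads as follows. The Borel transform solves $(\Omega - w\,\cdot)\hat\psi = \ldots$, a convolution equation whose singularities can only occur at $w=\pm i$, so Laplace integration along any direction of $S^\pm$ is allowed. Uniqueness on $S^\pm$ comes from the fact that any two solutions differ by a solution of the homogeneous equation, and such solutions behave like $e^{\pm i/r_1}$ times a power of $r_1$. A nonzero homogeneous solution therefore cannot be $\mathcal O(r_1)$ on a sector of opening greater than $\pi$, because it grows exponentially on some ray inside it. Real-analyticity follows by applying this uniqueness to $\overline{\psi^\pm_{1,\alpha}(\overline r_1)}$, since $S^\pm$ is symmetric about the real axis. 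The $\pm$-independence of the series follows from uniqueness of the formal solution.

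The main obstacle is summability at each order: showing that the inhomogeneity $B_\alpha$, which is built from $1$-sums, keeps $1$-summability in the directions of $S^\pm$ with the same singular directions. I would handle this by invoking \cite[Corollary 4.2]{ksum} or \cite[Lemma 4.8]{uldall2024a}, which treat linear rank-one systems with $1$-summable forcing. Alternatively, one can apply \cite[Theorem 3]{bonckaert2008a} to the augmented triangular system for $(\psi_{1,0},\ldots,\psi_{1,\alpha})$ jointly. That system is still a rank-one saddle-node whose linear part is $\operatorname{diag}(\Omega,\ldots,\Omega)$, with eigenvalues $\pm i$, so the same sectors $S^\pm$ apply, with $\delta$ independent of $\alpha$ once we only need each $\alpha$ separately.
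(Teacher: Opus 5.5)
Your overall route matches the paper's. You expand in $\epsilon_1$, take $\psi^{\pm}_{1,0}$ from \lemmaref{m0pm}, and get each $\psi^{\pm}_{1,\alpha}$ from a linear rank-one equation with leading part $\Omega$, invoking Bonckaert--De Maesschalck order by order.

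Your augmented triangular system is the cleanest way to justify that citation, because the order-$\alpha$ forcing is only $1$-summable, not analytic at $r_1=0$. Its linear part is block lower-triangular rather than $\operatorname{diag}(\Omega,\ldots,\Omega)$, since $r_1^2\psi_{1,\alpha-2}'$ reintroduces $\Omega\psi_{1,\alpha-2}$. The eigenvalues are still $\pm i$, so nothing changes.

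Your uniqueness argument, via growth of $\mathrm{e}^{\pm i/r_1}$ on a sector of opening $>\pi$, and your reality argument, via $\overline{\psi(\overline{r_1})}$, are correct and more explicit than the paper's. One wording point: $S^+$ contains the singular directions $\pm\pi/2$, so you cannot Laplace-integrate along every direction of $S^+$. You integrate along $\arg w\in(-\pi/2,\pi/2)$, and the union of the resulting Laplace domains covers $S^+(\delta,\chi)$ for $\chi<\pi$. The forcing's Borel singularities may also lie elsewhere on $i\mathbb{R}$, which is harmless.

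\textbf{The gap: the radius must be uniform in $\alpha$.} The lemma fixes $\delta$ first and asserts that \emph{every} $\psi^\pm_{1,\alpha}$ is defined on $S^\pm(\delta,\chi)$. Any per-order summability argument only produces some radius $\delta_\alpha$. This holds for Borel--Laplace and for Bonckaert--De Maesschalck applied to the augmented system of size $\alpha+1$. Nothing in either argument prevents $\delta_\alpha\to 0$. The Laplace integral converges only for $|r_1|$ below the reciprocal of the exponential type of the Borel transform. The constants depend on $\alpha$ through the system size, the term $\alpha(1+r_1F_0)$ in $A_\alpha$, and the forcing. Your closing phrase ``with $\delta$ independent of $\alpha$ once we only need each $\alpha$ separately'' does not establish this.

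\textbf{The missing step is a continuation argument, which is how the paper concludes.} Argue by induction on $\alpha$:
\begin{itemize}
\item Assume $\psi^\pm_{1,0},\ldots,\psi^\pm_{1,\alpha-1}$ are analytic on $S^\pm(\delta_0,\chi)$, with $\delta_0$ from \lemmaref{m0pm}.
\item Then the order-$\alpha$ equation is a linear ODE whose coefficients and forcing are analytic there, and it is regular for $r_1\neq 0$.
\item Hence the local solution on $S^\pm(\delta_\alpha,\chi)$ continues analytically to all of $S^\pm(\delta_0,\chi)$: linear equations have no blow-up, and the sector is simply connected.
\item The $1$-sum property is local near $r_1=0$. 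Uniqueness and reality carry over by uniqueness of the continuation and the conjugation symmetry of the equation.
\end{itemize}
With this step added, your proof is complete.
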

\begin{proof}
We rearrange \eqref{PDE0} to the following form
\begin{align}\eqlab{PDE00}
 r_1^2 \frac{\partial \zeta_1}{\partial r_1}-r_1\epsilon_1 \frac{\partial \zeta_1}{\partial \epsilon_1} =\Omega(1-\epsilon_1^2)^{-1} \zeta_1 + r_1 R_1(r_1, \zeta_1,\epsilon_1).
\end{align}
\rspp{With $\zeta_1$ as the formal series \eqref{wepsexp} (for simplicity we drop the $\pm$-superscript, which refers to the direction $S^{\pm}$),
we use the Faa di Bruno formula \cite[Theorem 2.1]{constantine1996a} (with $F(\zeta_1,\epsilon_1)=R_1(r_1,\zeta_1,\epsilon_1)$, $G(\epsilon_1)=(\zeta_1(\epsilon_1),\epsilon_1)$) to write
\begin{equation}\eqlab{R1expansion}
 \begin{aligned}
 R_1(r_1,\zeta_1,\epsilon_1) &= R_1(r_1,\psi_{1,0},0) + \sum_{\alpha=1}^\infty R_{1,\alpha}(r_1,\psi_{1,0},\ldots,\psi_{1,\alpha})  \epsilon_1^\alpha,
\end{aligned}
\end{equation}
where
\begin{equation}\eqlab{faa}
\begin{aligned}
 R_{1,\alpha}(r_1,\psi_{1,0},\ldots,\psi_{1,\alpha}) = &\sum_{1\le \vert \lambda \vert \le \alpha} \frac{1}{\lambda!} \frac{\partial^{\vert \lambda\vert}}{\partial \zeta_1^{(\lambda_1,\lambda_2)}\partial \epsilon_1^{\lambda_3}}R_1(r_1,\psi_{1,0},0)\times\\
 &\sum_{\gamma=1}^{\alpha} \sum_{(\sigma,\ell)\in p_{\gamma,\alpha,\lambda} } \prod_{\beta=1}^\gamma \frac{1}{\sigma_\beta! (\ell_\beta !)^{\vert \sigma_\beta\vert }} (\ell_{\beta}! \psi_{1,\ell_\beta})^{(\sigma_{1\beta},\sigma_{2\beta})} (\delta_{1\beta})^{\sigma_{3\beta}}.
\end{aligned}
\end{equation}
Here $\delta_{\alpha\beta}$ denotes Kronecker's delta.
The details of the finite index sets
\begin{align}\eqlab{p}
p_{\gamma,\alpha,\lambda}\subset \left\{(\sigma,\ell)\in (\mathbb N^3)^\gamma\times \mathbb N^\gamma\,:\,\sum_{\beta=0}^\gamma \sigma_\beta = \lambda\,\,\text{and}\,\,\sum_{\beta=0}^\gamma \vert \sigma_\beta\vert \ell_\beta=\alpha\right\}.
\end{align}
are not important, and we therefore refer to \cite[Eq. (2.2)]{constantine1996a}. We will only use that
 \begin{align*}
 R_{1,\alpha}(r_1,\psi_{1,0},\ldots,\psi_{1,\alpha}):=Q_{1,\alpha}(r_1,\psi_{1,0},\ldots,\psi_{1,\alpha-1}) + \frac{\partial}{\partial \zeta_1} R_1(r_1,\psi_{1,0},0)\psi_{1,\alpha},
 \end{align*}
 with the last term obtained from the sum in \eqref{faa} with $\lambda=(1,0,0)$ and $\lambda=(0,1,0)$ (where $p_{\gamma,\alpha,\lambda}$ are singletons), so that
$Q_{1,\alpha}$ (cf. \eqref{faa} and \eqref{p}) is a multivariate polynomial of degree $\alpha$ with respect to the variables $$(\psi_{1,1},\ldots,\psi_{1,\alpha-1}),$$ for
fixed $r_1,\psi_{1,0}$, for all $\alpha \in \mathbb N$. }
In this way, by inserting \eqref{wepsexp} into \eqref{PDE00} and collecting terms, we obtain the following equations for $\psi_{1,\alpha}$, $\alpha\in \mathbb N_0$:
\begin{align*}
 r_1^{2} \psi_{1,0}' = \Omega \psi_{1,0}+r_1 R_1(r_1,\psi_{1,0},0),
\end{align*}
and
\begin{equation}\eqlab{phin1eqn}
\begin{aligned}
 r_1^{2} \psi_{1,\alpha}' &= \Omega \psi_{1,\alpha} +r_1\left( \frac{\partial R_1}{\partial \zeta_1}(r_1,\psi_{1,0},0) +\alpha \operatorname{Id}\right)\psi_{1,\alpha} \\
 &+\sum_{\gamma=0}^{\alpha-1} \Omega A_{1,\alpha-\gamma} \psi_{1,\gamma}+r_1 Q_{1,\alpha-1}(r_1,\psi_{1,0},\ldots,\psi_{1,\alpha-1})\quad \forall\,\alpha\in \mathbb N,
\end{aligned}
\end{equation}
where
\begin{align*}
A_{1,\alpha} = \begin{cases}
                                                                                                 1 & \mbox{$\alpha$ even},\\
                           0 & \mbox{$\alpha$ odd}.
                           \end{cases}
\end{align*}
It follows that $\psi_{1,\alpha}\in S^{\pm}(\delta_\alpha,\chi)$ for $\delta_\alpha>0$ sufficiently small for all $\alpha\in \mathbb N_0$ from \cite{bonckaert2008a} (as in the proof of \lemmaref{m0pm}). In particular, $\psi_{1,\alpha}^\pm$ are the $1$-sums of a Gevrey-$1$ series in the directions defined by $S^\pm$, respectively. To obtain that $\delta_\alpha>0$ is uniformly bounded from below, we use that \eqref{phin1eqn} is a linear first order equation for $\psi_{1,\alpha}$, $\alpha\in \mathbb N$, which is regular for $r_1\ne 0$. Therefore, by \lemmaref{m0pm} ($\alpha=0$ case), the existence and uniqueness of solutions of linear equations and induction on $\alpha$, we conclude that $\psi_{1,\alpha}$ can be extended to
$S^{\pm}(\delta,\chi)$, $\delta=\delta_0$, for all $\alpha\in \mathbb N_0$.
\end{proof}

We will now use the formal invariant solutions to bring \eqref{r1y1z1eps1} into a suitable normal form, where the unstable and stable manifolds from the scaling chart can be extended to $r_1=\mathcal O(1)$ through application of the forward flow. 

In the following, we let $\mathcal G(S)\{\zeta_1,\epsilon_1\}$ denote the set of analytic functions $W_1:S\times B_{\xi}^3 \rightarrow \mathbb C^\gamma$, where $S=S^{\pm}(\delta,\chi)$ is an open sector
and where $B_\xi^3\subset \mathbb C^3$ is the open ball in $\mathbb C^3$ of radius $\xi>0$ centered at the origin, such that
\begin{align*}
 W_1(r_1,\zeta_1,\epsilon_1) = \sum_{\alpha,\beta} W_{1,\alpha,\beta}(r_1)\zeta_1^\alpha\epsilon_1^\beta ,
\end{align*}
with $\alpha,\beta$ ranging over $\mathbb N_0^2, \mathbb N_0$, respectively. Here the series is assumed to be absolutely convergent for $(\zeta_1,\epsilon_1)\in B_\xi^3\subset \mathbb C^3$, uniformly with respect $r_1\in S$, and where each $W_{1,\alpha,\beta}$ is the $1$-sum on $S$ (being continuous on the closure of $S$ with $W_{1,\alpha,\beta}(0)=0$ for all $\alpha\in \mathbb N_0^2,\beta\in \mathbb N_0$) of a Gevrey-$1$ series as $r_1\rightarrow 0$. As indicated, we suppress $\gamma$, which should be clear from the context. $\mathcal G(S)\{\epsilon_1\}$ (as a set of functions that are independent of $\zeta_1$) is defined similarly.

In the following, we fix $\delta>0$ and $\xi>0$ small enough and set $S^{\pm} :=S^{\pm}(\delta,\chi)$ with $0<\chi<\pi$. Recall that $\epsilon=r_1\epsilon_1$.
\begin{proposition}\label{prop:Hn}
 Fix any $N\in \mathbb N$.
 Then there exist two functions $W_1^{\pm,N}\in \mathcal G(S^{\pm})\{\epsilon_1\}$ such that the (blowup) transformations
 \begin{align}\eqlab{transform1}
 (r_1,\widetilde \zeta_1,\epsilon_1)\mapsto (r_1, \zeta_1,\epsilon_1), \quad r_1 \in S^{\pm},\quad (\widetilde \zeta_1,\epsilon_1) \in B_\xi^3,\end{align}
  defined by 
 \begin{align}
\zeta_1=W_1^{\pm,N}(r_1,\epsilon_1) + (r_1\epsilon_1)^N \widetilde \zeta_1,\eqlab{tildezeta1}
 \end{align}
bring \eqref{r1y1z1eps1} into the following two sets of equations by pull-back
\begin{equation}\eqlab{K1eqnsnf}
\begin{aligned}
 \dot r_1&= r_1^2\left( 1-\epsilon_1^2  + r_1 \FF\right) ,\\
 \dot{\widetilde \zeta}_1 &=\left(\Omega +r_1 \widetilde W_{11}^{\pm,N}(r_1,\epsilon_1)\right)\widetilde \zeta_1+(r_1\epsilon_1)^N \widetilde W_{10}^{\pm,N}(r_1,\widetilde \zeta_1,\epsilon_1),\\
  \dot{\epsilon}_1 &=-r_1\epsilon_1 \left( 1-\epsilon_1^2  + r_1 \FF\right),
\end{aligned}
\end{equation}
for $r_1\in S^{\pm}$, $(\widetilde \zeta_1,\epsilon_1) \in B_\xi^3$.
Here $\widetilde W_{11}^{\pm,N}\in \mathcal G(S^{\pm})\{\epsilon_1\}$, $\widetilde W_{10}^{\pm,N}\in \mathcal G(S^{\pm})\{\widetilde \zeta_1,\epsilon_1\}$ and
 \begin{align}
  W_1^{\pm,N} (r_1,0) = \psi_{1,0}^{\pm}(r_1),\quad W_1^{\pm,N} (0,\epsilon_1) = 0, \eqlab{HN0}
 \end{align}
for all $r_1\in S^{\pm}$, $\epsilon_1\in B_\xi$, respectively. Moreover,
\begin{align}\eqlab{T11expansion}
 \widetilde W_{11}^{\pm,N}(0,\epsilon_1) = (-2-b+\epsilon_1 \sigma) \operatorname{Id}.
%
\end{align}
\end{proposition}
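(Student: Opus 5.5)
The natural choice for $W_1^{\pm,N}$ is a truncation of the formal series from Lemma \ref{lem:Pnl}, corrected by a finite piece of the series from Lemma \ref{lem:En1} so that the residual is divisible by $\epsilon^N=(r_1\epsilon_1)^N$. Using only the $\epsilon_1$-series gives residuals of order $\epsilon_1^{N}$, but not $r_1^N\epsilon_1^N$.

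\textbf{Step 1: construct $W_1^{\pm,N}$.} Set
\[
W_1^{\pm,N}(r_1,\epsilon_1)=\sum_{\alpha=0}^{M}\psi_{1,\alpha}^{\pm}(r_1)\epsilon_1^\alpha+\text{(corrections)}.
\]
Here $M\ge N$ is taken large, and $\psi^{\pm}_{1,\alpha}\in\mathcal G(S^\pm)$ comes from Lemma \ref{lem:Pnl}. The property $W_1^{\pm,N}(r_1,0)=\psi^\pm_{1,0}(r_1)$ holds immediately. Next I would expand each $\psi_{1,\alpha}^\pm$ in its Gevrey-$1$ asymptotics, $\psi^\pm_{1,\alpha}=\sum_{\beta<N}\psi_{1,\alpha,\beta}r_1^\beta+r_1^N\widehat\psi^\pm_{1,\alpha}(r_1)$. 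By uniqueness of the formal double series (compare Lemma \ref{lem:En1} and Lemma \ref{lem:Pnl}), the coefficients $\psi_{1,\alpha,\beta}$ agree with the Taylor coefficients of $m_{1,\beta}(\epsilon_1)$. Hence the polynomial part in $r_1$ of degree $<N$ can be replaced by $\sum_{\beta<N}m_{1,\beta}(\epsilon_1)r_1^\beta$, which is analytic in $\epsilon_1\in B_\xi$.

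Concretely, I would define
\[
W_1^{\pm,N}=\sum_{\beta=1}^{N-1}m_{1,\beta}(\epsilon_1)r_1^\beta+\sum_{\alpha=0}^{N-1}\epsilon_1^\alpha\Big(\psi^\pm_{1,\alpha}(r_1)-\sum_{\beta=1}^{N-1}\psi_{1,\alpha,\beta}r_1^\beta\Big).
\]
This is the standard "sum of the two truncations minus the common double truncation" construction. The residual of \eqref{PDE} then consists of two contributions:
\begin{itemize}
\item terms of order $r_1^N$ with coefficients analytic in $\epsilon_1$, which also vanish to order $\epsilon_1^N$ because the $\epsilon_1$-truncation solves the equation up to $\epsilon_1^N$;
\item terms of order $\epsilon_1^N$ that also vanish to order $r_1^N$.
\end{itemize}
So the residual is $\mathcal O(r_1^N\epsilon_1^N)$ in $\mathcal G(S^\pm)\{\epsilon_1\}$. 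This divisibility claim, in both variables simultaneously and uniformly on the sector, is the main obstacle. I would prove it by writing the residual as a convergent double series and checking its vanishing against both formal solutions: its $\epsilon_1^\alpha$-coefficients, $\alpha<N$, vanish identically, and its $r_1^\beta$-asymptotic coefficients, $\beta<N$, vanish. A Gevrey-$1$ function whose asymptotic series starts at $r_1^N$ is $r_1^N$ times a function in $\mathcal G(S^\pm)$.

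\textbf{Step 2: substitute and identify the new coefficients.} Substituting \eqref{tildezeta1} into \eqref{r1y1z1eps1}, I would use that $\frac{d}{dt}(r_1\epsilon_1)=0$ along the flow, since $\epsilon$ is constant. Therefore $(r_1\epsilon_1)^N\dot{\widetilde\zeta}_1$ collects only the linearization plus quadratic terms plus the residual. Dividing by $(r_1\epsilon_1)^N$ gives \eqref{K1eqnsnf} with
\[
\widetilde W_{11}^{\pm,N}=W_{11}+2r_1W_{12}(\cdot)(W_1^{\pm,N},\cdot)+\dots
\]
The quadratic term contributes $(r_1\epsilon_1)^N r_1^2W_{12}(\widetilde\zeta_1,\widetilde\zeta_1)$, which goes into $\widetilde W_{10}^{\pm,N}$.

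\textbf{Step 3: the limit $r_1\to0$.} By \eqref{HN0}, $W_1^{\pm,N}(0,\epsilon_1)=0$, so $\widetilde W_{11}^{\pm,N}(0,\epsilon_1)=W_{11}(0,\epsilon_1)$. The sign of the $\epsilon_1\sigma$ term in \eqref{T11expansion} follows from recomputing $W_{11}(0,\epsilon_1)$ directly from \eqref{r1y1z1eps10}: the relevant terms are $b(-1+\epsilon_1\sigma)-2$.
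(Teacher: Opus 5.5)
Your construction is, once $\psi_{1,\alpha,\beta}$ is identified with the Taylor coefficients of $m_{1,\beta}$, the same ``sum of the two truncations minus the common double truncation'' that the paper uses. However, the truncation order you settle on is too low, and Step~2 fails as a result. Truncating both sums at $N-1$ leaves a residual of \eqref{PDE} of the form $(r_1\epsilon_1)^N\widetilde R_1(r_1,\epsilon_1)$. After substituting \eqref{tildezeta1} and dividing by $(r_1\epsilon_1)^N$, the $\widetilde\zeta_1$-equation therefore contains the forcing term $-\widetilde R_1(r_1,\epsilon_1)$. This term is $\mathcal O(1)$ and independent of $\widetilde\zeta_1$, so it cannot be written as $(r_1\epsilon_1)^N\widetilde W_{10}^{\pm,N}$ with $\widetilde W_{10}^{\pm,N}$ bounded on $S^\pm\times B_\xi^3$. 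You only checked the factor $(r_1\epsilon_1)^N$ on the quadratic term, not on the residual. Your initial remark that $M\ge N$ should be large was the right instinct: you need $M=2N-1$ in both sums, i.e.
\[
W_1^{\pm,N}=\sum_{\alpha=0}^{2N-1}\psi^{\pm}_{1,\alpha}(r_1)\epsilon_1^\alpha+\sum_{\beta=1}^{2N-1}\bigl(m_{1,\beta}-J^{2N-1}(m_{1,\beta})\bigr)(\epsilon_1)\,r_1^\beta,
\]
with $J^{2N-1}$ the degree-$(2N-1)$ Taylor polynomial in $\epsilon_1$. This is exactly the paper's choice. The residual is then $(r_1\epsilon_1)^{2N}\widetilde R_1^{\pm,N}$, and one factor $(r_1\epsilon_1)^N$ survives the division. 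This is not cosmetic. In \propref{prop3}, the prefactor $\epsilon^N$ in front of $\widetilde W_{10}^{\pm,N}$ in \eqref{fixed} is what absorbs the growth $\epsilon^{-2-b}$ from \lemmaref{Phi} and the time $T=\mathcal O(\epsilon^{-1})$ in \eqref{tildezeta1est}. With an $\mathcal O(1)$ forcing, that estimate breaks down.

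With this correction, the remaining steps coincide with the paper's proof: $\frac{d}{dt}(r_1\epsilon_1)=0$, Taylor expansion in $\widetilde\zeta_1$, and $W_1^{\pm,N}=\mathcal O(r_1)$, which gives $\widetilde W_{11}^{\pm,N}(0,\epsilon_1)=W_{11}(0,\epsilon_1)$. Your argument for the two-sided divisibility of the residual fills in what the paper leaves at ``by construction''. You show that its $\epsilon_1^\alpha$-coefficients vanish for $\alpha<2N$ and its $r_1^\beta$-asymptotic coefficients vanish for $\beta<2N$. You then divide by $r_1^{2N}$ using the Gevrey asymptotics and by $\epsilon_1^{2N}$ using analyticity in $\epsilon_1$.

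One small correction in Step~3: the term $-2r_1[1-\epsilon_1^2+\cdots]y_1$ in \eqref{r1y1z1eps10} contributes $-2(1-\epsilon_1^2)$, not $-2$. The direct computation therefore gives $\widetilde W_{11}^{\pm,N}(0,\epsilon_1)=(-2-b+b\sigma\epsilon_1+2\epsilon_1^2)\operatorname{Id}$, which agrees with \eqref{T11expansion} only up to $\mathcal O(\epsilon_1)$-terms. This is harmless for \lemmaref{Phi}: since $r_1\epsilon_1=\epsilon$, any such term contributes $r_1\,\mathcal O(\epsilon_1)=\mathcal O(\epsilon)$, which integrates to $\mathcal O(\epsilon T)=\mathcal O(1)$.
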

\begin{remark}
 Notice that \eqref{transform1} is of blowup-type since it reduces to $\zeta_1=W_1^{\pm,N}(r_1,\epsilon_1)$ for any $(r_1,\epsilon_1)\,:\,r_1\epsilon_1=0$.
\end{remark}

\begin{proof}
 We follow \cite[Lemma 4.5]{uldall2024a} and take
 \begin{align*}
  W_1^{\pm,N}(r_1,\epsilon_1):=\sum_{\alpha=0}^{2N-1} \psi_{1,\alpha}^{\pm}(r_1)\epsilon_1^\alpha + \sum_{\alpha=1}^{2N-1}( m_{1,\alpha}-J^{2N-1}(m_{1,\alpha}))(\epsilon_1) r_1^\alpha,
 \end{align*}
where $J^\gamma(W_1)$, $\gamma\in \mathbb N$, denotes the $\gamma$th order jet/partial sum of an analytic function $W_1=\sum_{\alpha=0}^\infty W_{1,\alpha} \epsilon_1^\alpha$:
 \begin{align*}
  J^\gamma(W_1)(\epsilon_1): = \sum_{\alpha=0}^\gamma W_{1,\alpha} \epsilon_1^\alpha.
 \end{align*}
Clearly, $W_1^{\pm,N}\in \mathcal G(S^{\pm})\{\epsilon_1\}$, satisfying \eqref{HN0}. 
 Then by construction $\zeta_{1}=W_1^{\pm,N}(r_1,\epsilon_1)$ defines an invariant manifold up to error-terms of order $\mathcal O((r_1\epsilon_1)^{2N})=\mathcal O(\epsilon^{2N})$, in the sense that there exists two locally defined real-analytic functions $\widetilde R_1^{\pm,N}\in \mathcal G(S^{\pm})\{\epsilon_1\}$:
 \begin{align*}
   &r_1^2\left( 1-\epsilon_1^2  + r_1 \FF\right) \frac{\partial W_1^{\pm,N}}{\partial r_1}-r_1\epsilon_1 \left( 1-\epsilon_1^2  + r_1 \FF\right)\frac{\partial W_1^{\pm,N}}{\partial \epsilon_1} \\
   &-\left(\left(\Omega +r_1 W_{11}\right) W_1^{\pm,N}+r_1W_{10}+r_1^2 W_{12}(W_1^{\pm,N})^2\right)\\
   &=:(r_1\epsilon_1)^{2N} \widetilde R_1^{\pm,N}(r_1,\epsilon_1)\quad \forall\,r_1\in S^{\pm},\,\epsilon_1\in B_\xi,
 \end{align*}
 writing (for simplicity) $$W_{10}(r_1,\epsilon_1), \,\, W_{11}(r_1,\epsilon_1), \,\, W_{12}(r_1,W_1^{\pm,N},\epsilon_1)(W_1^{\pm,N},W_1^{\pm,N}),$$
 compactly as $$W_{10}, \,\, W_{11},\,\, W_{12}(W_1^{\pm,N})^2,\,\, \mbox{respectively}.$$
 Therefore \eqref{tildezeta1} gives
 \begin{align*}
  (r_1\epsilon_1)^N \dot{\widetilde \zeta}_1 &=\left(\Omega +r_1 W_{11}\right) \zeta_1+r_1W_{10}+r_1^2 W_{12}(\zeta_1)^2\\
  &-r_1^2 \left( 1-\epsilon_1^2  + r_1 \FF\right)\frac{\partial W_1^{\pm,N}}{\partial r_1} \\
  &+ r_1\epsilon_1 \left( 1-\epsilon_1^2  + r_1 \FF\right) \frac{\partial W_1^{\pm,N}}{\partial \epsilon_1}\\
  &=\left(\Omega +r_1 W_{11}\right)(r_1\epsilon_1)^N \widetilde \zeta_1 -(r_1\epsilon_1)^{2N} \widetilde R_1^{\pm,N}\\
  &+r_1^2 \left(W_{12}\left(W_1^{\pm,N}+(r_1\epsilon_1)^N \widetilde \zeta_1)\right)^2-W_{12}\left(W_1^{\pm,N}\right)^2\right)\\
  &=:\left(\Omega +r_1\widetilde W_{11}^{\pm,N}(r_1,\epsilon_1)\right) (r_1\epsilon_1)^{N} \widetilde \zeta_1 + (r_1\epsilon_1)^{2N} \widetilde W_{10}^{\pm,N}(r_1,\widetilde \zeta_1,\epsilon_1),
 \end{align*}
 where we have used a Taylor-expansion with respect to $\widetilde \zeta_1$ in the final equality. 
The expansion of $\widetilde W_{11}^{\pm,N}$ follows from a simple calculation using that $W_1^{\pm,N}(r_1,\epsilon_1)=\mathcal O(r_1)$.
\end{proof}

Below we will use the flow of \eqref{K1eqnsnf}, $r_1\in S^{\pm}$, to extend the invariant manifolds from $x=r_1=\mathcal O(\epsilon)$ (recall \propref{unstable2}) to $x=r_1=\mathcal O(1)$. We focus on the unstable manifold and the case corresponding to ``$+$''. The case ``$-$'' can be handled in the same way and the details of this case is therefore left out.
\subsection{Elliptic paths}

 In this section, we first focus on the $(r_1,\epsilon_1)$-subsystem of \eqref{K1eqnsnf}:
\begin{equation}\eqlab{r1eps1}
\begin{aligned}
 \dot r_1 &= r_1^2 (1-\epsilon_1^2+r_1 \FF),\\
  \dot \epsilon_1 &=-r_1\epsilon_1 (1-\epsilon_1^2+r_1 \FF),
\end{aligned}
\end{equation}
with $\dot{(\cdot})=\frac{d}{dt}$,
which by construction decouples.  

We call trajectories $(r_1(t),\epsilon_1(t)))$, $t\in I\subset \mathbb R$, \textit{elliptic paths} (see e.g. \cite{hayes2016a}) since the $\widetilde \zeta_1$-dynamics, see \eqref{K1eqnsnf}, is dominated by oscillations along such paths, see also \lemmaref{Phi} below. In contrast to \textit{hyperbolic paths}, see \secref{difference} below, where the $\widetilde \zeta_1$-dynamics is dominated by contraction and expansion, the elliptic paths offer a suitable control of the $\widetilde \zeta_1$-dynamics to extend the unstable manifold, see also \cite{neishtadt1987a,neishtadt1988a}.

Notice that upon restricting attention to $\epsilon=r_1\epsilon_1=\textrm{const}.$, \eqref{r1eps1} can be reduced to
\begin{align}
 \dot r_1 &=r_1^2 -\epsilon^2 + r_1^3 \FF.\eqlab{r1eqnel}
\end{align}
We focus on the closed region $\mathcal A^+(\epsilon)$ defined by $r_1\in S^+(\delta,\chi)$ and $\vert r_1\vert \in [\xi^{-1}\epsilon,\delta]$, $\xi>0$ and $\delta>0$ small enough, and all $0<\epsilon\ll 1$. In polar coordinates $r_1=\rho_1 e^{i\phi}$, the region becomes $\rho_1\in [\xi^{-1}\epsilon,\delta]$, $\phi\in (-\frac{\pi+\chi}{2},\frac{\pi+\chi}{2})$. Moreover, in these coordinates \eqref{r1eqnel} becomes
\begin{equation}\eqlab{rho1phieqn}
\begin{aligned}
 \dot \rho_1 & = \rho_1^2 \left(\cos \phi-\rho_1^{-2}\epsilon^2 \cos \phi+\rho_1 \cos(2\phi) \FF\right),\\
 \dot \phi &=\sin \phi  \left(1+\rho_1^{-2} \epsilon^2+2\rho_1 \FF \cos \phi\right)\rho_1.
\end{aligned}
\end{equation}
Clearly, $\dot \phi\gtrless 0$ for $\phi\gtrless 0$ whereas $\dot \rho_1=0$ for $\rho_1\in [\xi^{-1}\epsilon,\delta]$ gives two solutions
\begin{align}\eqlab{phipm}
\phi= \phi_\pm(\rho_1,\epsilon^2\rho_1^{-2}):=\pm \frac{\pi}{2}+\mathcal O(\rho_1,\epsilon^2\rho_1^{-2}), \quad \rho_1\in [\xi^{-1}\epsilon,\delta],
\end{align} 
within $\mathcal A^+(\epsilon)$. This latter property can be obtained from the implicit function theorem.
Within the subset of $\mathcal A^+(\epsilon)$ defined by $$\phi\in (\phi_-(\rho_1,\epsilon^2\rho_1^{-2}),\phi_+(\rho_1,\epsilon^2\rho_1^{-2})),\quad \rho_1\in [\xi^{-1}\epsilon,\delta],$$ we have $\rho_1'>0$; recall that $\xi>0$ and $\delta>0$ are fixed small enough. We sketch the phase-portrait in \figref{elliptic}.

\begin{lemma}\lemmalab{Tbound}
 Let $r_1(t)\in \mathcal A^+(\epsilon)$ denote a forward solution of \eqref{r1eqnel} in $\mathcal A^+(\epsilon)$ with initial condition $r_1(0)$ in the interior of $\mathcal A^+(\epsilon)$. Then for any $0<\epsilon\ll 1$ there is a $T=T(r_1(0),\epsilon)>0$ such that $r_1(t)\in \mathcal A^+(\epsilon)$ for all $t\in [0,T)$ and $r_1(T)\in \partial \mathcal A^+(\epsilon)$. Moreover, there exists a constant $C>0$ such that
 \begin{align}\eqlab{Tbound}
  T(r_1(0),\epsilon)\le C \epsilon^{-1}\quad \forall\,r_1(0)\in \mathcal A^+(\epsilon),\,0<\epsilon\ll 1.
  \end{align}
\end{lemma}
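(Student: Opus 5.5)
The plan is to pass to the inverse variable $w:=r_1^{-1}$, in which the flow of \eqref{r1eqnel} is close to a uniform translation in the direction $-1$, and to read off the exit time from the size of $\mathcal A^+(\epsilon)$ in this variable. From \eqref{r1eqnel},
\begin{align*}
 \dot w = -\frac{\dot r_1}{r_1^2} = -\left(1-\epsilon^2 w^2 + \FF r_1\right).
\end{align*}
For $r_1\in \mathcal A^+(\epsilon)$ we have $\vert \epsilon w\vert = \epsilon\vert r_1\vert^{-1}\le \xi$ and $\vert r_1\vert\le \delta$. Hence
\begin{align*}
 \dot w = -1 + \mathcal O(\xi^2+\delta)
\end{align*}
uniformly on $\mathcal A^+(\epsilon)$ and for all $0<\epsilon\ll 1$. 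Fixing $\xi,\delta>0$ small enough, I will assume $\vert \dot w+1\vert\le \tfrac12$, so that $\operatorname{Re}\dot w\le -\tfrac12$ as long as the solution remains in $\mathcal A^+(\epsilon)$.

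Next I use the geometry of the region in the $w$-variable. The map $r_1\mapsto w$ sends $\mathcal A^+(\epsilon)$ onto
\begin{align*}
\left\{w\,:\,\vert w\vert\in [\delta^{-1},\xi\epsilon^{-1}],\ \vert \operatorname{Arg} w\vert \le \tfrac{\pi+\chi}{2}\right\}.
\end{align*}
In particular, every $w$ in this set satisfies $\vert \operatorname{Re} w\vert\le \vert w\vert\le \xi\epsilon^{-1}$. While $r_1(t)\in\mathcal A^+(\epsilon)$, integrating the bound on $\operatorname{Re}\dot w$ gives
\begin{align*}
 -\xi\epsilon^{-1}\le \operatorname{Re} w(t)\le \operatorname{Re} w(0)-\tfrac{t}{2}\le \xi\epsilon^{-1}-\tfrac t2 .
\end{align*}
Therefore the solution cannot remain in $\mathcal A^+(\epsilon)$ beyond $t=4\xi\epsilon^{-1}$.

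To obtain existence of $T$, I note that $\mathcal A^+(\epsilon)$ is closed and stays away from $r_1=0$. The right hand side of \eqref{r1eqnel} is analytic and bounded there, so the forward solution from an interior point exists for as long as it stays in $\mathcal A^+(\epsilon)$. Set $T:=\sup\{s\,:\,r_1([0,s))\subset \mathcal A^+(\epsilon)\}$. By the previous paragraph $T\le 4\xi\epsilon^{-1}$, and by closedness and continuity $r_1(T)\in\partial\mathcal A^+(\epsilon)$. This yields \eqref{Tbound} with $C=4\xi$, which works for any $C\ge 4\xi$ and any initial point in $\mathcal A^+(\epsilon)$.

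The only point needing care is the uniformity of the $\mathcal O(\xi^2+\delta)$ estimate. It holds because $\vert \epsilon/r_1\vert\le\xi$ and $\vert r_1\vert\le \delta$ hold uniformly on the region, independently of $\epsilon$. I therefore do not expect a real obstacle; the polar form \eqref{rho1phieqn} is not needed for this bound.
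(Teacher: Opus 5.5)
Your proof is correct, and it takes a different route from the paper's.

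The paper works in the polar form \eqref{rho1phieqn} and splits into two regimes. For $|\phi|\le c$ it uses $\dot\rho_1>\frac12\rho_1^2(1-\rho_1^{-2}\epsilon^2)$ to bound the time needed for radial growth. For $|\phi|>c$ it uses $|\dot\phi|\ge\frac12|\sin c|\xi^{-1}\epsilon$ to bound the time spent sweeping through angles. It then adds the two bounds, which relies on $|\phi|$ being nondecreasing ($\dot\phi\gtrless0$ for $\phi\gtrless0$), so that each regime is visited in one stretch.

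You instead pass to $w=r_1^{-1}$. There the vector field $-1+\epsilon^2w^2-\FF r_1$ lies within $\xi^2+|\FF|\delta$ of the constant $-1$ on the whole annulus $\xi^{-1}\epsilon\le|r_1|\le\delta$. The exit time is then at most the width $2\xi\epsilon^{-1}$ of the image region in the $\operatorname{Re}w$-direction divided by the minimal speed $\frac12$. This needs no case distinction and no monotonicity of the angle. It does not use the sector opening at all, and it gives the explicit constant $C=4\xi$, hence $\epsilon T\le4\xi$, which is exactly what enters \eqref{tildezeta1est}. Your coordinate $w=1/x$ is essentially the inner variable $t_2$ of \cite{baldom2013a} (recall $x=t_2^{-1}$ in the introduction). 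In that variable the flow is a near-unit-speed translation, which is why the bound becomes immediate.

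What the paper's polar computation buys in addition is the phase portrait itself: the curves $\phi_\pm$ where $\dot\rho_1=0$, and the outward drift of $|\phi|$. These are reused right after the lemma to choose initial conditions $r_1(0)=\xi^{-1}\epsilon\,\e^{i\phi}$ with $\phi\in(\phi_-,\phi_+)$, and to describe the subregion $\widetilde{\mathcal A}(\epsilon)$. Your argument does not provide that information, but the lemma as stated does not need it.
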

\begin{proof}
 The existence of $T$ follows from the analysis above, see also \figref{elliptic} for an illustration. To obtain the bound, we consider \eqref{rho1phieqn}. Suppose first that $\phi(t)\in [-c,c]$ with $c>0$ fixed small enough. Then
 \begin{align*}
 \dot \rho_1 >\frac12 \rho_1^2 (1-\rho_1^{-2}\epsilon^2),
 \end{align*}
 for $\xi,\delta>0$ small enough,
 from which an upper bound of $T$ of the form $C\epsilon^{-1}$ can be obtained. Suppose next that $\phi(t)\notin [-c,c]$. Then for $\xi>0$ and $\delta>0$ small enough: 
 \begin{align*}
  \vert\dot \phi\vert > \frac12 \vert \sin c\vert \rho_1\ge \frac12 \vert \sin c\vert \xi^{-1} \epsilon>0,
 \end{align*}
from which we estimate $T\le C\epsilon^{-1}$. In combination, these estimates prove that $r_1(T)\in \partial \mathcal A^+(\epsilon)$ with $0<T(r_1(0),\epsilon) \le C\epsilon^{-1}$, $C>0$ large enough, for all $0<\epsilon\ll 1$.
 \end{proof}

  \begin{figure}[h!]
\begin{center}
\includegraphics[width=.45\textwidth]{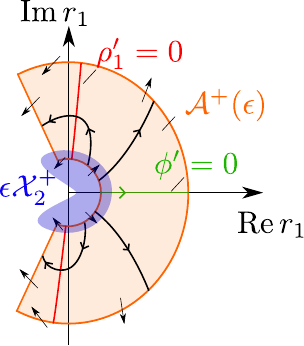}
\end{center}
\caption{Illustration of the phase portrait of \eqref{r1eqnel} on the region $\mathcal A^+(\epsilon)$ (orange). Here $r_1\in \epsilon\mathcal X_2^+$ (blue) is obtained from $x_2\in \mathcal X_2^+$ in \propref{unstable2}, cf. the change of coordinates in \eqref{cc}. We fix $\mathcal X_2^+$ so large so that $\epsilon \mathcal X_2^+$ has a non-empty intersection with $\mathcal A^+(\epsilon)$ for all $0<\epsilon\ll 1$. Then we extend the unstable manifold of $E^+(\epsilon)$ in \propref{unstable2} as a graph over $\mathcal A^+(\epsilon)$ by using the forward flow of \eqref{r1eqnel} (with initial conditions in $\epsilon \mathcal X_2^+$), see \propref{prop3}.}
\figlab{elliptic}
\end{figure}
%

 Now, let $r_1(t)\in \mathcal A^+(\epsilon)$, $t\in [0,T]$, denote a solution of \eqref{r1eqnel} with $r_1(0)\in \mathcal A^+(\epsilon)$. Here $T=T(r_1(0),\epsilon)>0$ is as in \lemmaref{Tbound}. Then we denote by $t\mapsto \widetilde{\mathcal M}_1^N(t,r_1(0),\epsilon)$, $t\in [0,T]$, the fundamental matrix associated with the linear problem
 \begin{align}
  \dot{\widetilde \zeta}_1(t) &=\left(\Omega+r_1(t)\widetilde W_{11}^{\pm,N}(r_1(t),\epsilon r_1(t)^{-1})\right)\widetilde \zeta_1(t),\quad t\in [0,T],\eqlab{lineareqn}
  \end{align}
with $\widetilde{\mathcal M}_1^N(0,r_1(0),\epsilon)=\operatorname{Id}$. Here we have used that $\epsilon_1=\epsilon r_1^{-1}$. In the following, we fix $\xi>0,\delta>0$ small enough.
\begin{lemma}\lemmalab{Phi}

The fundamental matrix $\widetilde{\mathcal M}_1^N(t,r_1(0),\epsilon)$ has the following expansion
  \begin{align}\eqlab{M1Nexpansion}
   \widetilde{\mathcal M}_1^N(t,r_1(0),\epsilon) = \left(\frac{r_1(0)^2-\epsilon^2}{r_1(t)^2-\epsilon^2}\right)^{\frac{2+b}{2}} \exp(\Omega t)\mathcal O(1),\quad t\in [0,T].
  \end{align}
  In particular,
  \begin{align}
  \Vert \widetilde{\mathcal M}_1^N(t,r_1(0),\epsilon) \widetilde{\mathcal M}_1^N(s,r_1(0),\epsilon)^{-1}\Vert \le
C\epsilon^{-2-b},\eqlab{Phib}
\end{align}
for some (new) constant $C>0$,
for all $0< \epsilon\ll 1,\,0\le s\le t\le T$.
\end{lemma}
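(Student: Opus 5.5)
Write $\widetilde{\mathcal M}_1^N(t)=\exp(\Omega t)\,\widehat{\mathcal M}(t)$. The scalar part of the leading coefficient is $\Omega$ plus a multiple of $\operatorname{Id}$, and $\Omega$ commutes with such multiples. The plan is to show that $\widehat{\mathcal M}$ is, up to an $\mathcal O(1)$ matrix, the scalar integrating factor coming from $-r_1(2+b)\operatorname{Id}$.

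First, substitute. Setting $\widetilde\zeta_1=\exp(\Omega t)\widehat\zeta$ in \eqref{lineareqn} gives
\begin{align*}
\dot{\widehat\zeta}=r_1(t)\exp(-\Omega t)\widetilde W_{11}^{+,N}(r_1(t),\epsilon r_1(t)^{-1})\exp(\Omega t)\,\widehat\zeta .
\end{align*}
By \eqref{T11expansion} we can write $\widetilde W_{11}^{+,N}(r_1,\epsilon_1)=(-2-b+\epsilon_1\sigma)\operatorname{Id}+\mathcal O(r_1)$, uniformly on $S^+\times B_\xi$. Since $\exp(\pm\Omega t)$ is a rotation of unit norm, the scalar part passes through the conjugation unchanged. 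The system becomes
\begin{align*}
\dot{\widehat\zeta}=\left(r_1(t)(-2-b)+\epsilon\sigma+\mathcal O(r_1(t)^2)\right)\widehat\zeta,
\end{align*}
where we used $r_1\epsilon_1=\epsilon$.

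Second, identify the scalar part along the path. Along \eqref{r1eqnel} we have $\dot r_1=(r_1^2-\epsilon^2)(1+\mathcal O(r_1))$. Hence
\begin{align*}
r_1=\frac{r_1\dot r_1}{r_1^2-\epsilon^2}\bigl(1+\mathcal O(r_1)\bigr)=\frac12\frac{d}{dt}\log(r_1^2-\epsilon^2)+\mathcal O(r_1^2).
\end{align*}
(We divide by $r_1^2-\epsilon^2$, which is bounded away from zero relative to $r_1^2$ on $\mathcal A^+(\epsilon)$ because $|r_1|\ge\xi^{-1}\epsilon$.) Integrating $-(2+b)r_1$ therefore gives the prefactor $\left(\frac{r_1(0)^2-\epsilon^2}{r_1(t)^2-\epsilon^2}\right)^{\frac{2+b}{2}}$. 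The remaining terms are $\epsilon\sigma$ and $\mathcal O(r_1^2)$. The term $\epsilon\sigma$ integrates to $\mathcal O(\epsilon T)=\mathcal O(1)$ by \lemmaref{Tbound}. What is left is the remainder of a scalar equation plus an $\mathcal O(r_1^2)$ matrix perturbation. By Gr\"onwall, this contributes a factor $\exp\left(\int_0^T\mathcal O(|r_1|^2)dt\right)$.

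Third, and this is the main obstacle, bound $\int_0^T|r_1(t)|^2dt$ uniformly in $\epsilon$. A naive bound $T\delta^2\le C\delta^2\epsilon^{-1}$ is not enough. I would use the polar system \eqref{rho1phieqn}, splitting into two cases as in the proof of \lemmaref{Tbound}.
\begin{itemize}
\item[(a)] When $|\phi|\le c$, we have $\dot\rho_1\gtrsim\rho_1^2$ once $\rho_1$ is a fixed multiple of $\epsilon$ away from the lower boundary. Then $\int\rho_1^2dt\lesssim\int d\rho_1\le\delta$.
\item[(b)] When $|\phi|>c$, we have $\dot\phi\gtrsim\rho_1$ and $\phi$ is monotone. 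Then $\int\rho_1^2dt\lesssim\int\rho_1\,d\phi$. Along such a segment $\rho_1$ changes by at most an $\mathcal O(1)$ factor, since $d\rho_1/d\phi=\mathcal O(\rho_1)$. This gives $\mathcal O(\delta)$.
\end{itemize}
In both cases the contribution is $\mathcal O(1)$, which yields \eqref{M1Nexpansion}. Alternatively, one could use the change of variables $dt=dr_1/(r_1^2-\epsilon^2)(1+\dots)$ and integrate $r_1^2/(r_1^2-\epsilon^2)$ along the complex path, which has bounded length in $\mathcal A^+(\epsilon)$.

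Finally, \eqref{Phib} follows from \eqref{M1Nexpansion}. The composition $\widetilde{\mathcal M}_1^N(t)\widetilde{\mathcal M}_1^N(s)^{-1}$ equals the analogous expression with $r_1(0)$ replaced by $r_1(s)$, times an $\mathcal O(1)$ matrix and a rotation. For the numerator, $|r_1(s)^2-\epsilon^2|\le 2\delta^2$. For the denominator, $|r_1(t)^2-\epsilon^2|\ge c\epsilon^2$ because $|r_1|\ge\xi^{-1}\epsilon$ with $\xi$ small. This gives the bound $C\epsilon^{-2-b}$.
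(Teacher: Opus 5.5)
Your proposal is correct and follows essentially the same route as the paper. Both factor $\widetilde{\mathcal M}_1^N$ into three pieces: the scalar integrating factor $\left(\frac{r_1(0)^2-\epsilon^2}{r_1(t)^2-\epsilon^2}\right)^{\frac{2+b}{2}}$, the rotation $\exp(\Omega t)$ (the paper also puts $\e^{\sigma\epsilon t}$ here, which you equivalently bound by $\epsilon T=\mathcal O(1)$), and a remainder fundamental matrix driven by an $\mathcal O(r_1^2)$ coefficient. The two proofs differ only in how that remainder is bounded. The paper switches to $r_1$ as the independent variable, $d\widehat\zeta_1/dr_1=\mathcal O(1)\widehat\zeta_1$, which implicitly uses that the path has bounded length; this is the alternative you mention. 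Your polar-coordinate estimate of $\int_0^T|r_1|^2\,dt$ makes that step explicit, and it works because $|\phi|$ is monotone along orbits, so each of your two regimes is visited at most once.
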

\begin{proof}
We write
\begin{align}\eqlab{widetildeM}
 \widetilde{\mathcal M}_1^N(t,r_1(0),\epsilon) = \left(\frac{r_1(0)^2-\epsilon^2}{r_1(t)^2-\epsilon^2}\right)^{\frac{2+b}{2}} \e^{(\Omega +\sigma \epsilon \operatorname{Id}) t}\widehat{\mathcal M}_1^N(t,r_1(0),\epsilon).
\end{align}
A simple calculation then shows that $\widehat{\mathcal M}_1^N(t,r_1(0),\epsilon)$ is the fundamental matrix associated with the linear problem
\begin{align*}
\dot{\widehat \zeta}_1(t) &= \mathcal O(r_1(t)^2)\widehat \zeta_1,
\end{align*}
satisfying $\widehat{\mathcal M}_1^N(0,r_1(0),\epsilon)=\operatorname{Id}$. Here we have used \eqref{T11expansion}, $\xi>0$ and $\delta>0$ small enough, and that $\vert r_1(t)\vert\in [\xi^{-1}\epsilon,\delta]$, $t\in [0,T]$, by assumption. But then
\begin{align*}
\frac{d\widehat \zeta_1}{dr_1} =  \mathcal O(1) \widehat \zeta_1,
\end{align*}
by the chain rule, recall \eqref{r1eqnel}. It follows that $\Vert \widehat{\mathcal M}_1^N(t,r_1(0),\epsilon)\widehat{\mathcal M}_1^N(s,r_1(0),\epsilon)^{-1}\Vert$, $0\le s\le t\le T$, is uniformly bounded. This proves \eqref{M1Nexpansion} and \eqref{Phib},
upon
using \eqref{Tbound} and $\Vert \exp({\Omega t})\Vert\equiv 1$.
\end{proof}

\subsection{Extension of the unstable manifold}
By taking the compact set $\mathcal X_2^+$ in \propref{unstable2} large enough, we can by the change of coordinates \eqref{cc}, write the unstable manifold of $E^+(\epsilon)$ in the graph form
\begin{align}
 (y_1,z_1) = m_1^+(r_1,\epsilon):=(\epsilon r_1^{-1})^2 m_2^+(\epsilon^{-1} r_1,\epsilon),\eqlab{init}
\end{align}
on the boundary $r_1\in \partial^1 \mathcal A^+(\epsilon)$ of $\mathcal A^+(\epsilon)$ defined by $\vert r_1\vert =\xi^{-1} \epsilon$. 
We then use the flow with \eqref{init} as initial conditions to extend the graph \eqref{init} over the region $r_1\in \mathcal A^+(\epsilon)$.

Notice that since $m_2^+$ is asymptotic to the series \eqref{expansion}, and the series \eqref{Eseries} coincides with \eqref{expansion} upon the change of coordinates \eqref{cc}, it follows that \eqref{init} becomes
\begin{align}\eqlab{tilde_init}
 \widetilde \zeta_1(r_1,\epsilon) = \epsilon^N \widetilde m_1^{+,N}(r_1,\epsilon),
\end{align}
with respect to the $\widetilde \zeta_1$-coordinate,
for $r_1\in \partial^1 \mathcal A^+(\epsilon)$ and some locally defined real-analytic function $\widetilde m_1^{+,N}(\cdot,\epsilon)$, recall \eqref{transform1}. We then use \lemmaref{Phi} to write \eqref{K1eqnsnf} as a fixed-point equation:
\begin{equation}\eqlab{fixed}
\begin{aligned}
 \widetilde \zeta_1(t) &= \widetilde{\mathcal M}_1^N(t,r_1(0),\epsilon) \epsilon^N \widetilde m_1^{+,N}(r_1(0),\epsilon )\\
 &+\int_0^t \widetilde{\mathcal M}_1^N(t,r_1(0),\epsilon) \widetilde{\mathcal M}_1^N(s,r_1(0),\epsilon)^{-1} \epsilon^N \widetilde W_{10}^{\pm,N}(r_1(s),\widetilde \zeta_1(s),\epsilon r_1(s)^{-1}) ds,
\end{aligned}
\end{equation}
where $r_1(t)\in \mathcal A^+(\epsilon)$ is the solution of \eqref{r1eqnel} in $\mathcal A^+(\epsilon)$ with $r_1(0)\in \partial^1\mathcal A^+(\epsilon)$. In fact, to ensure that $r_1(t)\in \mathcal A^+(\epsilon)$ for $t\in [0,T]$, we restrict attention to initial conditions $r_1(0)=\xi^{-1} \epsilon e^{i\phi}$ with $\phi\in (\phi_-,\phi_+)$, recall \eqref{phipm}. See also \figref{elliptic}. In this way, we parameterize a subset $\widetilde{\mathcal A}(\epsilon)\subset \mathcal A^+(\epsilon)$ through $t\in [0,T]$ and $\phi\in (\phi_-,\phi_+)$. The fact that $\widetilde{\mathcal A}(\epsilon)\ne \mathcal A^+(\epsilon)$ is not important. Indeed, $\widetilde{\mathcal A}(\epsilon)$ clearly contains a region (similar to $\mathcal A^+(\epsilon)$, see \figref{elliptic}) with $\vert r_1\vert \in [\widetilde \xi^{-1}\epsilon,\widetilde \delta]$, $\phi\in (-\frac{\pi+\chi}{2},\frac{\pi+\chi}{2})$, where $0<\widetilde \xi<\xi$ and $0<\widetilde \delta<\delta$, for all  $0<\epsilon\ll 1$, and we can just work with this region instead.



\begin{proposition}\proplab{prop3}
The unstable manifold of $E^+(\epsilon)$ is a graph over $r_1\in \mathcal A(\epsilon)$:
\begin{align}\eqlab{graph1}
 (y_1,z_1) = m_1^+(r_1,\epsilon),\quad r_1\in \mathcal A(\epsilon),
\end{align}
for all $0<\epsilon<\epsilon_0$, $0<\epsilon_0\ll 1$.
Here $m_1^+(\cdot,\epsilon):\mathcal A(\epsilon)\rightarrow \mathbb C^2$ is real-analytic with respect to $r_1\in \mathcal A^+(\epsilon)$ and $C^\infty$ with respect to $\epsilon\in [0,\epsilon_0)$. In particular,
\begin{align}
  m_1^+(r_1,0) = \psi_{1,0}^+(r_1), \quad r_1 \in S^+.\eqlab{m1eps0}
\end{align}

\end{proposition}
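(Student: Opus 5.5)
The argument runs in the normal-form coordinates of Proposition~\ref{prop:Hn} and solves the fixed-point equation \eqref{fixed} along the elliptic paths of \eqref{r1eqnel}. The initial data on $\partial^1\mathcal A^+(\epsilon)$ come from \propref{unstable2} via \eqref{init}, in the form \eqref{tilde_init}, with $\widetilde m_1^{+,N}$ uniformly bounded. Here one uses that the $m_{2,\alpha}$ correspond to the $m_{1,\alpha}$ under \eqref{cc}, and that $W_1^{+,N}$ reproduces the $m_{1,\alpha}$ up to order $2N-1$ in $r_1$.

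\textbf{Step 1: existence and smallness of $\widetilde\zeta_1$.} On the Banach space of continuous functions $[0,T]\to B_\xi^2$ with the sup norm, the right-hand side of \eqref{fixed} defines a contraction. By \eqref{Phib} the propagator is bounded by $C\epsilon^{-2-b}$. By \eqref{Tbound} the integration interval has length at most $C\epsilon^{-1}$. The nonlinearity carries the prefactor $(r_1\epsilon_1)^N=\epsilon^N$ with $\widetilde W^{+,N}_{10}$ bounded and Lipschitz on $B_\xi^3$. Hence the map sends the ball of radius $C\epsilon^{N-3-b}$ into itself and is a contraction once $N>3+b$ and $\epsilon$ is small. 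This yields $\widetilde\zeta_1=\mathcal O(\epsilon^{N-3-b})$ uniformly on $\widetilde{\mathcal A}(\epsilon)$, and therefore on the subregion $\mathcal A(\epsilon)$ with $|r_1|\in[\widetilde\xi^{-1}\epsilon,\widetilde\delta]$.

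\textbf{Step 2: graph property, analyticity and the limit $\epsilon=0$.} The flow in $r_1$ is the decoupled equation \eqref{r1eqnel}, and $(t,\phi)\mapsto r_1(t)$ parametrizes $\widetilde{\mathcal A}(\epsilon)$ injectively. So the invariant set obtained is a graph $\zeta_1=W_1^{+,N}(r_1,\epsilon r_1^{-1})+\epsilon^N\widetilde\zeta_1(r_1,\epsilon)$. It is analytic in $r_1$, since it is the image of an analytic curve under an analytic flow, and it is real-analytic by the symmetry under conjugation. Because the unstable manifold is unique, this graph is independent of $N$.

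As $\epsilon\to0$ with $r_1$ fixed, $\epsilon_1=\epsilon/r_1\to0$. Then \eqref{HN0} gives $W_1^{+,N}(r_1,\epsilon/r_1)\to\psi^+_{1,0}(r_1)$, and the correction is $\mathcal O(\epsilon^{N-3-b})$, which proves \eqref{m1eps0}. Any fixed $r_1\in S^+$ lies in $\mathcal A(\epsilon)$ for $\epsilon$ small, so the limit makes sense on all of $S^+$.

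\textbf{Step 3: $C^\infty$ dependence on $\epsilon$ (main obstacle).} For $\epsilon>0$, smoothness is standard. At $\epsilon=0$, the term $W_1^{+,N}(r_1,\epsilon r_1^{-1})$ is analytic in $(r_1,\epsilon)$ for fixed $r_1\in S^+$, because $W_1^{+,N}\in\mathcal G(S^+)\{\epsilon_1\}$ converges in $\epsilon_1$. The delicate piece is $\epsilon^N\widetilde\zeta_1$. Differentiating \eqref{fixed} $k$ times in $\epsilon$ loses powers of $\epsilon$ from three sources:
\begin{itemize}
\item[(i)] the $\epsilon$-dependence of the path $r_1(t)$ and of $T$ on scales $t\sim\epsilon^{-1}$;
\item[(ii)] the factor $\big((r_1(0)^2-\epsilon^2)/(r_1(t)^2-\epsilon^2)\big)^{(2+b)/2}$ in \eqref{M1Nexpansion};
\item[(iii)] the initial data, which are $C^\infty$ in $\epsilon$ by \propref{unstable2} but are evaluated at $r_1(0)=\xi^{-1}\epsilon e^{i\phi}$.
\end{itemize}

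I would avoid the time parametrization. Instead I would rewrite the $\widetilde\zeta_1$-equation as an ODE in $r_1$ along a fixed ray, so that the integration runs from $|r_1|=\xi^{-1}\epsilon$ to the target point and the propagator is controlled as in the proof of \lemmaref{Phi}. Then I would show by induction that each $\epsilon$-derivative costs at most a fixed power $\epsilon^{-c}$ with $c$ independent of $N$. Choosing $N\gg k$ then makes the $k$-th derivative of $\epsilon^N\widetilde\zeta_1$ vanish at $\epsilon=0$. This mirrors the argument at the end of the proof of \propref{unstable2}, and since $N$ is arbitrary and the graph is independent of $N$, it gives $C^\infty$ smoothness.
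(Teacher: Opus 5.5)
Your Steps 1 and 2 are essentially the paper's proof. The paper uses an a priori bootstrap bound on \eqref{fixed} rather than a contraction, but the ingredients are yours: the propagator bound \eqref{Phib}, the time bound \eqref{Tbound}, the prefactor $\epsilon^N$, and $N>3+b$. It also obtains \eqref{m1eps0} from \eqref{tildezeta1}, \eqref{tildezeta1est} and \eqref{HN0}, as you do. For the $\epsilon$-smoothness the paper only says ``proceed as in the proof of \propref{unstable2}''. Your overall plan there (a fixed polynomial loss per $\epsilon$-derivative, $N\gg k$, independence of the manifold from $N$) is the right one.

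The concrete mechanism in your Step 3 fails, however. The bound \eqref{Phib} rests on $\Vert\exp(\Omega t)\Vert\equiv 1$ for \emph{real} $t$, i.e.\ on integrating along real-time orbits of \eqref{r1eqnel} (the elliptic paths). Written as an ODE in $r_1$, the $\Omega$-part of the propagator is $\exp(\Omega\tau)$ with $\tau=\int dr_1/(r_1^2-\epsilon^2+\FF r_1^3)$. Integrate along the ray from $\xi^{-1}\epsilon\e^{i\theta}$ to $r_1=\vert r_1\vert\e^{i\theta}$. Then $\tau\approx\xi\epsilon^{-1}\e^{-i\theta}-r_1^{-1}$, so $\operatorname{Im}\tau\approx-\xi\epsilon^{-1}\sin\theta$. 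Since $\Omega$ has eigenvalues $\pm i$, this gives $\Vert\exp(\Omega\tau)\Vert\approx\e^{\xi\vert\sin\theta\vert/\epsilon}$.

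So on every non-real ray one mode of the propagator is exponentially large in $\epsilon^{-1}$. This includes the imaginary axis $\theta=\pm\frac{\pi}{2}$, which is exactly where $m^+$ is used in \secref{difference}. No power $\epsilon^N$ compensates this. You therefore cannot even recover the zeroth-order bound of Step 1 along rays, let alone a loss of $\epsilon^{-c}$ per derivative. Only the real ray is an elliptic path.

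To repair Step 3 you must integrate along paths on which $\operatorname{Im}\tau$ stays bounded. One option is what the paper intends. Differentiate \eqref{fixed} itself along the true elliptic paths, where $\tau=t$ is real. Then check that the $\epsilon$-dependence of $T$, of the entry point on $\vert r_1\vert=\xi^{-1}\epsilon$, and of the prefactor in \eqref{M1Nexpansion} costs only powers of $\epsilon^{-1}$.

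If you want an $\epsilon$-independent path, use instead the arc through the fixed target on which $\operatorname{Im}(r_1^{-1})$ is constant, i.e.\ an orbit of $\dot r_1=r_1^2$. With $w=r_1^{-1}=w_0+s$, $s\ge 0$, one has $d\tau=-ds/(1-\epsilon^2w^2+\FF w^{-1})$. Since $\operatorname{Im}w$ is constant, $\operatorname{Im}(\epsilon^2w^2)=2\epsilon^2\operatorname{Im}(w)\operatorname{Re}(w)$ integrates to $\mathcal O(\xi^2\vert\operatorname{Im}w_0\vert)$ up to $\vert w\vert=\xi\epsilon^{-1}$. The $\FF w^{-1}$-term contributes $\mathcal O(1)$. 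Hence $\operatorname{Im}\tau=\mathcal O(1)$ for a fixed target. With either choice, the rest of your Step 3 goes through.
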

\begin{proof}
 We work in the $\widetilde \zeta_1$-coordinates and consider the fixed-point formulation \eqref{fixed}. It suffices to estimate the solution. Fix $C>0$ and let $T_0\le T(r_1(0),\epsilon)$ be so that  $\vert \widetilde \zeta_1(t)\vert\le C$ for all $t\in [0,T_0]$. Then by \eqref{Phib}, we conclude from \eqref{fixed} that
\begin{align}
 \vert \widetilde \zeta_1(t)\vert \le \mathcal O(1) \epsilon^{N-2-b} + \mathcal O(1) \epsilon^{N-2-b} T_0\le \mathcal O(1)\epsilon^{N-3-b},\eqlab{tildezeta1est}
\end{align}
for all $t\in [0,T_0]$, upon using \eqref{Tbound}, and that $\epsilon T$ is uniformly bounded by \lemmaref{Tbound}, see \eqref{Tbound}. We therefore take $N>3+b$ and $T_0=T$ for all $0<\epsilon\ll 1$. This gives the existence of the graph \eqref{graph1} for all $0<\epsilon<\epsilon_0$, $0<\epsilon_0\ll 1$. Notice in particular that \eqref{m1eps0} follows from \eqref{tildezeta1} and \eqref{tildezeta1est}, see also \eqref{HN0}. Finally, for the smoothness with respect to $\epsilon$, we can proceed as in the proof of \propref{unstable2}.
\end{proof}

\section{A dynamical systems approach to the difference }\seclab{difference}
It now follows from \propref{unstable2} and \propref{prop3}, see also \eqref{c1} and \eqref{c2}, that we have invariant manifolds of \eqref{model} as graphs
\begin{align}
    (y,z)&=m^{\pm}(x,\epsilon),\quad x\in S^{\pm},\eqlab{finalmpm}
\end{align}
respectively. (Although \propref{prop3} only relates to the unstable manifold of $E^+$, we can obtain a similar result for the stable manifold of $E^-$ upon time-reversal.) By taking the sets $\mathcal X^{\pm}$ large enough, we can replace $S^{\pm}$ by their closures, $\operatorname{Cl}(S^{\pm})$, in \eqref{finalmpm}. We then consider the difference
\begin{align*}
 \Delta m:= m^{+}(x,\epsilon)-m^{-}(x,\epsilon),\quad \Delta m =:( \Delta y,
  \Delta z),
\end{align*}
on the common domain $x\in S^+\cap S^-$, which by using the closures of $S^{\pm}$, includes a subset $i[-\delta, \delta]$, $\delta>0$ small enough, of the imaginary axis $i \mathbb R\subset \mathbb C$.
Each $m^{\pm}$ is third order in the following sense
\begin{align*}m^{\pm}(x,\epsilon) = \mathcal O(\vert (x,\epsilon)\vert^3),
\end{align*}
for $(x,\epsilon)\to (0,0)$ in $S^{\pm}\times [0,\epsilon_0)$, respectively. This follows from the analysis in the charts.

Now, $(y,z)=m^{+}(x,\epsilon)$, $x\in S^+$, and $(y,z)=m^{-}(x,\epsilon)$, $x\in S^-$, are invariant manifolds, satisfying the invariance equation
\begin{align*}
 \frac{d}{dx} \begin{pmatrix}
               y\\
               z
              \end{pmatrix}
  = J(x,y,z,\epsilon),
\end{align*}
with
\begin{align*}
 J(x,y,z,\epsilon): = \frac{1}{x^2-\epsilon^2+a(y^2+z^2) +F }\left(\left(\Omega +b(\sigma \epsilon-x)\operatorname{Id}\right)\begin{pmatrix}
                                                                                                                              y\\
                                                                                                                              z
                                                                                                                             \end{pmatrix}+\begin{pmatrix}
G\\
H
\end{pmatrix}\right),
\end{align*}
where $W=W(x,y,z,\epsilon)$, $W=F,G,H$. In particular, we note that $J(x,m^{\pm}(x,\epsilon),\epsilon)$ are both well-defined on $x\in S^+\cap S^-$ for all $0<\epsilon\ll 1$. Consequently,
\begin{equation}\eqlab{DeltayDeltaz}\begin{aligned}
 \frac{d}{dx}\Delta m &= J(x,m^-+\Delta m,\epsilon)-J(x,m^-,\epsilon)\\
 &=:\begin{pmatrix}
     A_{11}(x,\epsilon) & A_{12}(x,\epsilon)\\
     A_{21}(x,\epsilon) & A_{22}(x,\epsilon)
    \end{pmatrix}
\Delta m,
\end{aligned}
\end{equation}
%
by the mean value theorem, where
\begin{align*}
    A_{11}(x,\epsilon)& = \frac{b(-x+\epsilon \sigma) +G'_y(x,y_*,z_*,\epsilon)}{x^2-\epsilon^2+a(y_*^2+z_*^2)+F(x,y_*,z_*,\epsilon)}\\
    &-
     \frac{b(-x+\epsilon \sigma)y_*+z_* +G(x,y_*,z_*,\epsilon)}{(x^2-\epsilon^2+a(y_*^2+z_*^2)+F(x,y_*,z_*,\epsilon))^2}(2a y_*+F'_y(x,y_*,z_*,\epsilon),\\
         A_{12}(x,\epsilon)& = \frac{1+G'_z(x,y_*,z_*,\epsilon)}{x^2-\epsilon^2+a(y_*^2+z_*^2)+F(x,y_*,z_*,\epsilon)}\\
    &-
     \frac{b(-x+\epsilon \sigma)y_*+z_* +G(x,y_*,z_*,\epsilon)}{(x^2-\epsilon^2+a(y_*^2+z_*^2)+F(x,y_*,z_*,\epsilon))^2}(2a z_*+F'_z(x,y_*,z_*,\epsilon),
     \end{align*}
     and
     \begin{align*}
    A_{21}(x,\epsilon)& = \frac{-1+H'_y(x,y_*,z_*,\epsilon)}{x^2-\epsilon^2+a(y_*^2+z_*^2)+F(x,y_*,z_*,\epsilon)}\\
    &-
     \frac{b(-x+\epsilon \sigma)z_*-y_* +H(x,y_*,z_*,\epsilon)}{(x^2-\epsilon^2+a(y_*^2+z_*^2)+F(x,y_*,z_*,\epsilon))^2}(2a y_*+F'_y(x,y_*,z_*,\epsilon),\\
    A_{22}(x,\epsilon)& = \frac{b(-x+\epsilon \sigma) +H'_z(x,y_*,z_*,\epsilon)}{x^2-\epsilon^2+a(y_*^2+z_*^2)+F(x,y_*,z_*,\epsilon)}\\
    &-
     \frac{b(-x+\epsilon \sigma)z_*-y_* +H(x,y_*,z_*,\epsilon)}{(x^2-\epsilon^2+a(y_*^2+z_*^2)+F(x,y_*,z_*,\epsilon))^2}(2a z_*+F'_z(x,y_*,z_*,\epsilon).
\end{align*}
Here \begin{align*}(y_*,z_*)=(y_*,z_*)(x,\epsilon) = \mathcal O(\vert (x,\epsilon)\vert^3),
\end{align*}
for $(x,\epsilon)\to (0,0)$ in $(S^+\cap S^-)\times [0,\epsilon_0)$, are (four different pairs of) known real-analytic functions (the intermediate points in the application of the mean value theorem), depending smoothly on $\epsilon\in [0,\epsilon_0)$ (in the same sense as $m^\pm$ in the charts above).
It will not be important to distinguish the pairs, so we use the same symbols for all of them. Notice by \eqref{Fexpansion}:
\begin{align}\eqlab{Fexpansionst}
 F(x,y_*,z_*,\epsilon) = \FF x^3+\mathcal O(\vert (x,\epsilon)\vert^4),
\end{align}
for $(x,\epsilon)\to (0,0)$ in $(S^+\cap S^-)\times [0,\epsilon_0)$.

Now, the quantities $A_{\alpha\beta}$ are each of the form ``$0/0$'' for $(x,\epsilon)=(0,0)$, so we study the resulting system using blowup. It will suffice to work along the imaginary axis, so we put $$x=iv,$$ and use the real blowup:
\begin{align}\eqlab{blowupfinal}
    \rho \ge 0,\,(\breve \rho,\breve \epsilon)\in S^1\mapsto \begin{cases}
v=\rho \breve v,\\
\epsilon = \rho \breve \epsilon.
    \end{cases}
\end{align}
\begin{remark}
    Notice that in comparison with the earlier blowup, we only blowup $x$ and $\epsilon$ and we focus on $x\in i\mathbb R$. Therefore \eqref{blowupfinal} is real. Notice also, that we abuse notation slightly in \eqref{blowupfinal} by redefining $\breve \epsilon$ (which was used in a different (complex) context in \eqref{blowup}).
    
    Moreover, as \eqref{model} is real-analytic, so that $(x(t),y(t),z(t))$, $t\in \mathbb R$, is a solution if and only if $(\overline x(t),\overline y(t),\overline z(t))$, $t\in \mathbb R$, is a solution, we have
    \begin{align}
        {A_{\alpha\beta}(-iv,\epsilon)} = \overline{A_{\alpha\beta}}(iv,\epsilon),\eqlab{Eprop}
    \end{align}
    and in particular
    \begin{align*}
        \begin{cases}
            \Delta y(-iv) =\overline{\Delta y}(iv),\\
            \Delta z(-iv) = \overline{\Delta z}(iv),
        \end{cases}
    \end{align*}
    where the bar denotes complex conjugation, for all $v\in [-\delta,\delta]$. The system
    \begin{equation}\eqlab{DeltayDeltaz1}
\begin{aligned}
   \frac{d\Delta y}{dv} &=i\left(A_{11}(iv,\epsilon) \Delta y+A_{12}(iv,\epsilon) \Delta z\right),\\
   \frac{d\Delta z}{dv} &=i\left(A_{21}(iv,\epsilon) \Delta y+A_{22}(iv,\epsilon) \Delta z\right),
\end{aligned}
\end{equation}
    obtained by substituting $x=iv$ into \eqref{DeltayDeltaz}, is therefore also invariant with respect to the time-reversal symmetry
    \begin{align}\eqlab{symmetry}
        (v,\Delta y,\Delta z)\mapsto (-v,\overline{\Delta y},\overline{\Delta z}).
    \end{align}
\end{remark}
We work in the charts $\breve v=1$, $\breve \epsilon=1$ and $\breve v=-1$ with chart-specific coordinates $(\rho_1,\epsilon_1)$, $(\rho_2,v_2)$ and $(\rho_3,\epsilon_3)$, respectively, defined by:
\begin{align*}
    \breve v=1:\quad \begin{cases}
        v =\rho_1,\\
        \epsilon =\rho_1\epsilon_1,
    \end{cases}\\
    \breve \epsilon=1:\quad \begin{cases}
        v =\rho_2v_2,\\
        \epsilon =\rho_2,
    \end{cases}\\
    \breve v=-1:\quad \begin{cases}
        v =-\rho_3,\\
        \epsilon =\rho_3\epsilon_3.
    \end{cases}
\end{align*}
The change of coordinates are given by the expressions
\begin{align}\eqlab{cc2}
    \begin{cases}
       \rho_1 =\rho_2 v_2,\\
        \epsilon_1 =v_2^{-1},
    \end{cases}\qquad \begin{cases}
       \rho_3 =\rho_2 (-v_2),\\
        \epsilon_3 =(-v_2)^{-1}.
    \end{cases}
\end{align}
In the scaling chart: $\breve \epsilon=1$, we find
\begin{equation}\eqlab{Aalphabeta2}
\begin{aligned}
    A_{11}(\epsilon i v_2,\epsilon) &= \epsilon^{-2} \left(\frac{b\epsilon (-i v_2+\sigma)+\mathcal O(\epsilon^2) }{-v_2^2-1-\epsilon \FF iv_2^3+\mathcal O(\epsilon^2)}-\frac{\mathcal O(\epsilon^3)}{\left(-v_2^2-1-\epsilon \FF iv_2^3+\mathcal O(\epsilon^2)\right)^2}\right),\\
    A_{12}(\epsilon i v_2,\epsilon) &= \epsilon^{-2} \left(\frac{1+\mathcal O(\epsilon^2) }{-v_2^2-1-\epsilon \FF iv_2^3+\mathcal O(\epsilon^2)}-\frac{\mathcal O(\epsilon^3)}{\left(-v_2^2-1-\epsilon \FF iv_2
    ^3+\mathcal O(\epsilon^2)\right)^2}\right),\\
    A_{21}(\epsilon i v_2,\epsilon)&=\epsilon^{-2} \left(\frac{-1+\mathcal O(\epsilon^2) }{-v_2^2-1-\epsilon \FF iv_2^3+\mathcal O(\epsilon^2)}-\frac{\mathcal O(\epsilon^3)}{\left(-v_2^2-1-\epsilon \FF iv_2^3+\mathcal O(\epsilon^2)\right)^2}\right),\\
    A_{22}(\epsilon  i
    v_2,\epsilon)&=\epsilon^{-2} \left(\frac{b\epsilon (-i v_2+\sigma)+\mathcal O(\epsilon^2) }{-v_2^2-1-\epsilon \FF iv_2^3+\mathcal O(\epsilon^2)}-\frac{\mathcal O(\epsilon^3)}{\left(-v_2^2-1-\epsilon \FF iv_2^3+\mathcal O(\epsilon^2)\right)^2}\right).
\end{aligned}
\end{equation}
Here we have used \eqref{Fexpansionst}.
This leads to the following system 
\begin{align*}
  \frac{d\Delta y}{dv_2} &= \frac{i}{\epsilon(-v_2^2-1-\epsilon \FF iv_2^3)}\left( (b\epsilon(-iv_2+\sigma)+\mathcal O(\epsilon^2)) \Delta y+ (1+\mathcal O(\epsilon^2))\Delta z \right),\\
 \frac{d\Delta z}{dv_2} &= \frac{i}{\epsilon(-v_2^2-1-\epsilon \FF iv_2^3))}\left( (b\epsilon(-iv_2+\sigma)+\mathcal O(\epsilon^2)) \Delta z- (1+\mathcal O(\epsilon^2))\Delta y \right),
\end{align*}
for $v_2\in I$, with $I\subset \mathbb R$ compact. To study this system, we use the following equivalent first order formulation:
\begin{equation}\eqlab{Deltayz2}
\begin{aligned}
v_2' &=-\epsilon ,\\
\Delta y' &= \frac{i}{v_2^2+1+\epsilon \FF iv_2^3}\left( (b\epsilon(-iv_2+\sigma)+\mathcal O(\epsilon^2)) \Delta y+ (1+\mathcal O(\epsilon^2))\Delta z \right),\\
 \Delta z' &= \frac{i}{v_2^2+1+\epsilon \FF iv_2^3}\left( (b\epsilon(-iv_2+\sigma)+\mathcal O(\epsilon^2)) \Delta z- (1+\mathcal O(\epsilon^2))\Delta y \right).
\end{aligned}
\end{equation}
The advantage of working with \eqref{Deltayz2} is that we can use dynamical systems technique. In particular,  $\epsilon=0$ is well-defined:
\begin{align*}
v_2'&=0,\\
 \Delta y' &=\frac{i}{v_2^2+1} \Delta z,\\
 \Delta z' &=-\frac{i}{v_2^2+1}\Delta y.
\end{align*}
We notice that $(v_2,0,0)$ is a normally hyperbolic line, the linearization having eigenvalues $0,\pm 1$. (In this sense, the imaginary axis is a \textit{hyperbolic path}, see \cite{hayes2016a,new}.) In particular, the two-dimensional sets given by
\begin{align}\eqlab{Ysu}
(v_2,-i\Delta z,\Delta z)\quad \mbox{and}\quad (v_2,i\Delta z,\Delta z),
\end{align} 
define invariant stable and unstable manifolds, respectively, of the line $(v_2,0,0)$, $v_2\in I$, for $\epsilon=0$. For $0\le \epsilon\ll 1$,
we therefore have stable and unstable manifolds (which are line bundles by the linearity):
\begin{align}\eqlab{su2}
    \begin{cases} \Delta y = Y_2^s(v_2,\epsilon) \Delta z,\\
    \Delta y = Y_2^u(v_2,\epsilon) \Delta z,
    \end{cases} 
    \end{align}
    where 
    \begin{align*}
    \begin{cases}
    Y_2^s(v_2,\epsilon)=-i+\mathcal O(\epsilon),\\ Y_2^u(v_2,\epsilon)  =i+\mathcal O(\epsilon).\end{cases}
    \end{align*}
    are $C^n$-smooth (for any $n\in \mathbb N$) with respect to $v_2\in I$, $0\le \epsilon<\epsilon_0(n)$, $0<\epsilon_0(n)\ll 1$, with $I\subset \mathbb R$ a fixed compact set. This follows from Fenichel's theory, see e.g. \cite{jones_1995}. In fact, by the symmetry \eqref{symmetry}, \eqref{Deltayz2} has a time-reversal symmetry $(v_2,\Delta y,\Delta z,t)\mapsto (-v_2,\overline{\Delta y},\overline{\Delta z},-t)$, and we can therefore take
    \begin{align*}
        Y_2^u(v_2,\epsilon) := \overline{Y_2^s}(-v_2,\epsilon).
    \end{align*}
In turn,
\begin{align}\eqlab{diagonalization1}
 \begin{pmatrix}
  \Delta y\\
  \Delta z
 \end{pmatrix}  &=
\begin{pmatrix}
 Y_2^s(v_2,\epsilon) & \overline{Y_2^s}(-v_2,\epsilon)\\
 1 & 1 \end{pmatrix}\begin{pmatrix}
 \Delta s\\
 \Delta u
 \end{pmatrix},
\end{align}
defines a $C^n$-smooth $\epsilon$-dependent change of coordinates $(v_2,\Delta y,\Delta z)\mapsto (v_2,\Delta s,\Delta u)$, that  diagonalizes \eqref{Deltayz2}:
\begin{equation}\eqlab{p2Duv}
\begin{aligned}
v_2' &=-\epsilon,\\
\Delta s'&= \frac{1}{v_2^2+1} \left(-1+\mathcal O(\epsilon)\right)\Delta s,\\
\Delta u'&= \frac{1}{v_2^2+1} \left(1+\mathcal O(\epsilon)\right)\Delta u.
\end{aligned}
\end{equation}
In these coordinates, the time-reversal symmetry takes the form
$$(v_2,\Delta s,\Delta u,t)\mapsto (-v_2,\overline{\Delta u},\overline{\Delta s},-t). $$
The system \eqref{p2Duv} can be viewed as a Fenichel normal form, see \cite[Section 3]{jones_1995}.
 The following result shows that the diagonalization  \eqref{diagonalization1} can be ``globalized'' beyond a compact subset $v_2\in I$ (by working in the directional charts $\breve v=\pm 1$ associated with the blowup \eqref{blowupfinal}).

\begin{proposition}\proplab{diag}
Fix any $n\in \mathbb N$ and consider \eqref{DeltayDeltaz} with $x=iv$, $v\in [-\delta,\delta]$, $\delta>0$ small enough. Then there exists a locally defined $\epsilon$-dependent and $v$-fibered change of coordinates $(v,\Delta y,\Delta z)\mapsto (v,\Delta s,\Delta u)$ defined by
\begin{align*}
    \begin{pmatrix}
        \Delta y\\
        \Delta z
    \end{pmatrix} = \begin{pmatrix}
        -i+\mathcal O(\rho^2) & i +\mathcal O(\rho^2)\\
        1 & 1
    \end{pmatrix} \begin{pmatrix}
        \Delta s\\
        \Delta u
    \end{pmatrix},
\end{align*}
that diagonalizes \eqref{DeltayDeltaz}:
\begin{equation}\eqlab{diagonalfinal}
    \begin{aligned}
        \frac{d\Delta s}{dv} &=\frac{1}{-v^2-\epsilon^2}\left(-1+\frac{\FF i v^3}{v^2+\epsilon^2}+b v+i\epsilon b\sigma  +\mathcal O(\rho^2)\right)\Delta s,\\
        \frac{d\Delta u}{dv} &= \frac{1}{-v^2-\epsilon^2}\left(1-\frac{\FF i v^3}{v^2+\epsilon^2}+b v+i\epsilon b\sigma  +\mathcal O(\rho^2)\right)\Delta u.
    \end{aligned}
    \end{equation}
    Here all $\mathcal O(\rho^2)$-terms are $C^n$-smooth with respect to $\rho \in [0,\delta]$, $(\breve v,\breve \epsilon)\in S^1$ (i.e. they are smooth functions of $(\rho_1,\epsilon_1)$, $(v_2,\rho_2=\epsilon)$ and $(\rho_3,\epsilon_3)$ in the charts). Moreover, the system \eqref{diagonalfinal} is invariant with respect to the time-reversal symmetry $(v,\Delta s,\Delta u)\mapsto (-v,\overline{\Delta u},\overline{\Delta s})$ for all $0<\epsilon\ll 1$.
\end{proposition}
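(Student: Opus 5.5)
The plan is to construct the diagonalizing change of coordinates chart by chart in the real blowup \eqref{blowupfinal}, and then glue the pieces using uniqueness of invariant line bundles together with the symmetry \eqref{symmetry}.

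\emph{Scaling chart $\breve\epsilon=1$.} This step is already done above. Fenichel theory applied to \eqref{Deltayz2} gives the line bundles $\Delta y=Y_2^{s,u}(v_2,\epsilon)\Delta z$ over a compact interval $v_2\in I$. The remaining task is to sharpen $Y_2^s=-i+\mathcal O(\epsilon)$ to $-i+\mathcal O(\epsilon^2)$ and to identify the diagonal entries. For this I would write the Riccati equation satisfied by the slope $Y=\Delta y/\Delta z$ of an invariant line:
\begin{align*}
-\epsilon\,\frac{dY}{dv_2}=\frac{i}{v_2^2+1+\epsilon\FF i v_2^3}\left((1+\mathcal O(\epsilon^2))(1+Y^2)+\mathcal O(\epsilon^2)Y\right),
\end{align*}
in which the $b\epsilon(-iv_2+\sigma)$ contributions cancel because they appear identically on the diagonal. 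Hence $Y=\pm i$ solves the equation up to $\mathcal O(\epsilon^2)$, and the $\mathcal O(\epsilon)$-correction vanishes since $\frac{d}{dv_2}(\pm i)=0$. Substituting $\Delta y=Y^s\Delta z$ into the $\Delta z$-equation and converting back to $v=\epsilon v_2$ yields the diagonal entries in \eqref{diagonalfinal}. The term $\frac{\FF iv^3}{v^2+\epsilon^2}$ comes from expanding the denominator $-(v^2+\epsilon^2)-\FF iv^3$, and the terms $bv+i\epsilon b\sigma$ come from $ib(-x+\sigma\epsilon)$ with $x=iv$.

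\emph{Directional chart $\breve v=1$.} Here $v=\rho_1$ and $\epsilon=\rho_1\epsilon_1$. The same Riccati equation, written with respect to $v$, has the form
\begin{align*}
\rho_1\,\frac{dY}{d\rho_1}\cdot(\text{regular, nonvanishing factor})=(1+Y^2)\,c(\epsilon_1)+\rho_1(\ldots),
\end{align*}
coupled with $\dot\epsilon_1=-\epsilon_1$ after desingularizing by $\rho_1$. The points $(\rho_1,\epsilon_1,Y)=(0,\epsilon_1,\pm i)$ form a line of equilibria for the $Y$-dynamics that is normally hyperbolic, since the linearization in $Y$ is $\pm 2i$ times a factor that becomes real after the multiplication by $i$. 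There are also center directions in $(\rho_1,\epsilon_1)$. I would take the center manifold through $\rho_1=0$ that matches the scaling-chart solution $Y_2^s$ on the overlap given by \eqref{cc2}. This center manifold is $C^n$ and can be chosen $\mathcal O(\rho^2)$-close to $\pm i$, by the same cancellation argument as in the scaling chart. The chart $\breve v=-1$ requires no separate work: the symmetry \eqref{symmetry} carries it to the $\breve v=1$ analysis, so $Y^u(v)=\overline{Y^s}(-v)$ holds globally.

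\emph{Main obstacle.} The hardest step is securing uniqueness and matching of the center-like manifold in the directional chart, because center manifolds are generally not unique. I would handle this by defining $Y^s$ as the continuation, along the flow in $v$, of the Fenichel slow manifold from the scaling chart. In the chart $\breve v=1$ the flow of $\epsilon_1$ runs toward $\epsilon_1=0$ as $\rho_1$ grows, which is the direction in which the stable-type slope is attracting within the Riccati dynamics. The continuation is therefore unique and $C^n$ up to $\epsilon_1=0$, which gives $Y^s(v,\epsilon)=-i+\mathcal O(\rho^2)$ uniformly on $[-\delta,\delta]$. Finally, verifying invariance of \eqref{diagonalfinal} under $(v,\Delta s,\Delta u)\mapsto(-v,\overline{\Delta u},\overline{\Delta s})$ reduces to the relation \eqref{Eprop} together with the choice $Y^u(v)=\overline{Y^s}(-v)$.
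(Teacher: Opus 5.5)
There is a genuine gap on the $\breve v=-1$ side. You build $Y^s$ by taking the Fenichel slow manifold $Y_2^s$ of the scaling chart and continuing it in $v$. Since $\frac{dY}{dv}\approx-\frac{i}{v^2+\epsilon^2}(1+Y^2)$, the linearization at $Y=-i$ is $-\frac{2}{v^2+\epsilon^2}$. So this branch is attracting for increasing $v$, and your continuation into the chart $\breve v=1$ is fine. But $Y^s$ is also needed on $v\in[-\delta,-L\epsilon]$, and there the continuation runs in the repelling direction.

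The symmetry does not supply this piece. The map $(v,Y)\mapsto(-v,\overline Y)$ sends the branch near $-i$ on $v<0$ to the branch near $+i$ on $v>0$. It therefore exchanges $Y^s$ and $Y^u$ across $v=0$; it cannot produce $Y^s$ on $v<0$ from $Y^s$ on $v>0$. Hence ``$Y^u(v)=\overline{Y^s}(-v)$ holds globally'' presupposes exactly the global $Y^s$ that is missing. Your ``center manifold near $+i$ in $\breve v=1$ that matches $Y_2^u$'' is the same problem in mirror image.

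Continuation in the repelling direction really does fail, because slow manifolds are not unique. To leading order, $Y_2^s+\epsilon^N\exp\bigl(-\tfrac{2}{\epsilon}(\arctan v_2+\arctan L)\bigr)$ is an equally admissible slow manifold on $v_2\in[-L,L]$: it is $C^n$ for $N$ large and $\mathcal O(\epsilon)$-close to the critical manifold. Continued back to $v=-\delta$, its deviation is amplified by roughly $\mathrm{e}^{2/(L\epsilon)}$. So $Y^s=-i+\mathcal O(\rho^2)$ with $C^n$ dependence on $(\rho_3,\epsilon_3)$ cannot be obtained this way.

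The missing idea is to reverse the order of construction, as the paper does. At $(\rho_3,Y,\epsilon_3)=(0,-i,0)$ in the chart $\breve v=-1$ there is a (non-unique) two-dimensional center manifold $Y=-i+\mathcal O(\rho_3^2)$. Fix one and transport it in the direction of increasing $v$. In that direction the center manifold in $\breve v=-1$, the slow manifold in $\breve\epsilon=1$ and the center manifold in $\breve v=1$ are all attracting; this is the Dumortier--Roussarie/Krupa--Szmolyan extension. The center manifold on the $\breve v=-1$ side thus selects the slow manifold in the scaling chart, rather than the construction starting from an arbitrary one. This yields $Y^s$ on all of $[-\delta,\delta]$, and only afterwards is $Y^u:=\overline{Y^s}(-v,\epsilon)$ defined by symmetry.

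A smaller point concerns your chart $\breve v=1$ system. There the coefficients $A_{\alpha\beta}$ are of order $\rho_1^{-2}$, so the equation reads $\rho_1^2\frac{dY}{d\rho_1}=\ldots$. Desingularizing by $\rho_1^2$ gives $\dot\rho_1=\rho_1^2$ and $\dot\epsilon_1=-\rho_1\epsilon_1$ (in increasing-$v$ time), not $\dot\epsilon_1=-\epsilon_1$. Only with this double-pole structure is $\{(0,\epsilon_1,\pm i)\}$ a line of equilibria with center directions in $(\rho_1,\epsilon_1)$, as you assert. Your simple-pole version would instead make the $(\rho_1,\epsilon_1)$-directions hyperbolic.
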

\begin{proof}
    In order to prove the statement, we will extend (globalize) the stable and unstable manifolds of \eqref{Deltayz2}, recall \eqref{su2}, beyond the scaling chart ($\breve \epsilon=1$). For this purpose, it is convenient to work in projective coordinates: For $\Delta z\ne 0$, define
    \begin{align*}
        Y := \frac{\Delta y}{\Delta z}.
    \end{align*}
    Then a simple calculation shows that
    \begin{align*}
        \frac{dY}{dv} = i\left(A_{12}(iv,\epsilon)+(A_{11}(iv,\epsilon)-A_{22}(iv,\epsilon))Y-A_{21}(iv,\epsilon)Y^2\right),
    \end{align*}
    which is invariant with respect to $(v,Y)\mapsto (-v,\overline Y)$, recall \eqref{Eprop}.
    In the $\breve \epsilon=1$-chart, we obtain the first order system
    \begin{equation}\eqlab{p2Y}
    \begin{aligned}
v_2'&=-\epsilon,\\
Y' & = \frac{i}{v_2^2+1+\epsilon \FF i v_2^3} \left(1+\mathcal O(\epsilon^2) + \mathcal O(\epsilon^2) Y + (1+\mathcal O(\epsilon^2))Y^2\right),\\
\epsilon' &=0,
    \end{aligned}
    \end{equation}
    by \eqref{Aalphabeta2}.
    Now, for $\epsilon=0$ we have critical manifolds defined by $(v_2,\mp i,0)$ which are normally hyperbolic. Indeed, the linearization about any point $(v_2,\mp i,0)$ has a single nontrivial eigenvalue of the form
    \begin{align}\eqlab{nontrivialeig}
     \frac{i}{v_2^2+1}2(\mp i) = \frac{\pm 2}{v_2^2+1},
    \end{align}
respectively. We focus on $Y=-i$ (and use the symmetry to study $Y=i$). By \eqref{Ysu}, $Y=-i$ corresponds to the stable manifold in the $(v_2,\Delta y,\Delta z)$-space for $\epsilon=0$ and notice that it is unstable for \eqref{p2Y} by \eqref{nontrivialeig}. Now, by working in a compact set $v_2\in I$, we have a repelling slow manifold:
    \begin{align}\eqlab{slow}
       Y = -i+\mathcal O(\epsilon^2),
    \end{align}
    as a $C^n$-smooth graph over $(v_2,\epsilon)\in I\times [0,\epsilon_0)$, $0<\epsilon_0\ll 1$.
    Next, we consider the coordinates of the $\breve v=1$-chart:
    \begin{align*}
    A_{11}(i\rho_1,\rho_1\epsilon_1) &= \frac{1}{\rho_1^2 \left[-1-\epsilon_1^2 -\FF i \rho_1\right]}\left(b \rho_1(-i+\epsilon_1 \sigma) +\mathcal O(\rho_1^2)\right),\\
    A_{12}(i\rho_1,\rho_1\epsilon_1) &= \frac{1}{\rho_1^2 \left[-1-\epsilon_1^2 -\FF i \rho_1\right]}\left(1+\mathcal O(\rho_1^2)\right),\\
    A_{21}(i\rho_1,\rho_1\epsilon_1) &= \frac{1}{\rho_1^2 \left[-1-\epsilon_1^2 -\FF i \rho_1\right]}\left(-1+\mathcal O(\rho_1^2)\right),\\
     A_{22}(i\rho_1,\rho_1\epsilon_1) &= \frac{1}{\rho_1^2 \left[-1-\epsilon_1^2 -\FF i \rho_1\right]}\left(b \rho_1(-i+\epsilon_1 \sigma) +\mathcal O(\rho_1^2)\right).
\end{align*}
    Consequently, 
    \begin{align*}
    \frac{dY}{d\rho_1} &= \frac{i}{\rho_1^2\left[-1-\epsilon_1^2-\FF i \rho_1\right]} \left(1+\mathcal O(\rho_1^2)+\mathcal O(\rho_1^2)Y+(1+\mathcal O(\rho_1)) Y^2\right),
    \end{align*}
    or equivalently
    \begin{equation}\eqlab{Deltayz1}
\begin{aligned}
    \dot \rho_1 &= -\rho_1^2,\\
    \dot Y &=  \frac{i}{\left[1+\epsilon_1^2+\FF i \rho_1\right]}\left(1+\mathcal O(\rho_1^2)+\mathcal O(\rho_1^2)Y+(1+\mathcal O(\rho_1)) Y^2\right),\\
    \dot \epsilon_1 &= \rho_1 \epsilon_1.
\end{aligned}
\end{equation}
In comparison with \eqref{p2Y}, we have used a desingularization of time corresponding to division of the right hand side by $\epsilon_1^2$, i.e. $\dot{(\cdot)} = \epsilon_1^{-2} (\cdot)'$. For \eqref{Deltayz1}, we have partially hyperbolic singularities $(0,\pm i,0)$, that are stable and unstable, respectively. Indeed the linearization around $(0,\pm i,0)$ have eigenvalues $0,\mp 2,0$. We focus on $(0,-i,0)$, having a  center manifold:
\begin{align}\eqlab{center1}
    Y = -i +\mathcal O(\rho_1^2),
\end{align}
as a $C^n$-smooth graph over $\rho_1\in [0,\delta],\epsilon_1\in [0,\xi]$, with $\delta>0$ and $\xi>0$ small enough. The center manifold, which is repelling for \eqref{Deltayz1}, is foliated by $\epsilon=\rho_1\epsilon_1$ constant.

The results in the $\breve v=-1$-chart are similar. Here we also obtain a partially hyperbolic equilibrium point $(\rho_3,Y,\epsilon_3)=(0,-i,0)$ and a two-dimensional center manifold:
\begin{align}
    Y = -i +\mathcal O(\rho_3^2),\eqlab{center3}
\end{align}
as a $C^n$-smooth graph over $\rho_3\in [0,\delta],\epsilon_3\in [0,\xi]$, with $\delta>0$ and $\xi>0$ small enough. The center manifold is also repelling and foliated by $\epsilon=\rho_3\epsilon_3$ constant.

Now, notice that with respect to \textit{the backward flow}, the center manifolds in the $\breve v=\pm 1$-charts and the slow manifold in the $\breve \epsilon=1$-chart all become attracting and moreover $v$ (in particular $v_2$) becomes an increasing function. Therefore, following the pioneering work of Dumortier and Roussarie \cite{dumortier_1996}, see also \cite{krupa_extending_2001} by Krupa and Szmolyan, we can use the backward flow to extend a fixed center manifold in the $\breve v=-1$-chart, see \eqref{center3}, into the scaling chart $\breve \epsilon=1$ (since $\rho_3=-p$ is decreasing) and continue this as a smooth invariant manifold there upon using the slow manifold, see \eqref{slow}, which is also attracting for the backward flow. Moreover, from here we can extend the resulting manifold into the $\breve v=1$-chart by using the attracting center manifold, see \eqref{center1}. In this way, we obtain a global invariant manifold
\begin{align*}
Y=Y^s(v,\epsilon)=-i+\mathcal O(\rho^2),
\end{align*}
with $\mathcal O(\rho^2)$ being $C^n$-smooth on the cylinder (i.e. in the charts). 
%

Next, we obtain the unstable manifold by applying the time-reversal symmetry:
\begin{align*}
    Y=Y^u(v,\epsilon):=\overline{Y^s}(-v,\epsilon).
    \end{align*}
   Then the desired diagonalization is given by
    \begin{align}\eqlab{Deltayzuv}
 \begin{pmatrix}
  \Delta y\\
  \Delta z
 \end{pmatrix}  = \begin{pmatrix}
 Y^s(v,\epsilon) & \overline{Y^s}(-v,\epsilon)\\
 1 & 1 \end{pmatrix}\begin{pmatrix}
 \Delta s\\
 \Delta u
 \end{pmatrix}
    \end{align}
    which brings the stable and unstable manifolds to $\Delta u = 0$ and $\Delta s=0$, respectively. In turn, it follows that the linear system has been diagonalized, so that $\frac{d\Delta }{dv}$ only depends upon $\Delta $ for $\Delta =\Delta s,\Delta u$.  Therefore to compute ${d\Delta s}/{dv}$ we can just put $\Delta u=0$ so that $\Delta z=\Delta s$ and set $\Delta y=Y^s(v,\epsilon)\Delta z$ (cf. \eqref{Deltayzuv}) into the $\Delta z$-equation:
\begin{align*}
\frac{d\Delta s}{dv} &= i\left(A_{21}(iv,\epsilon) Y^s(v,\epsilon)+A_{22}(iv,\epsilon)\right)\Delta s.
\end{align*} 
${d\Delta u}/{dv}$ is obtained in the same way:
\begin{align*}
    \frac{d\Delta u}{dv} &= i\left(A_{21}(iv,\epsilon) \overline{Y^s}(-v,\epsilon)+A_{22}(iv,\epsilon)\right)\Delta u.
\end{align*}
Now, using the expansions of $A_{21},A_{22}$ and $Y^s$ in the charts, we obtain the desired result. In particular, in the $\breve v=1$-chart,  we find that
\begin{align*}
\frac{d\Delta s}{d\rho_1} &= \frac{1}{\rho_1^2 \left[-1-\epsilon_1^2 -\FF i \rho_1\right]}\left(-1+ib\rho_1(-i+\epsilon_1 \sigma)+\mathcal O(\rho_1^2)\right)\Delta s,\\
&=\frac{1}{\rho_1^2 \left[-1-\epsilon_1^2\right] }\left(-1+\frac{\FF i\rho_1}{1+\epsilon_1^2}+b\rho_1 +i \rho_1 \epsilon_1 b\sigma+\mathcal O(\rho_1^2)\right)\Delta s,
\end{align*} 
using the expansion
\begin{align*}
     \left[1+\epsilon_1^2 +\FF i \rho_1\right]^{-1} = \frac{1}{1+\epsilon_1^2 } -\frac{\FF i\rho_1 }{(1+\epsilon_1^2 )^2}+\mathcal O(\rho_1^2),
\end{align*}
uniformly on a compact domain $\epsilon_1\in [0,\xi]$, $\xi>0$,
and similarly
\begin{align*}
\frac{d\Delta u}{d\rho_1} &=\frac{1}{\rho_1^2 \left[-1-\epsilon_1^2\right] }\left(1-\frac{\FF i\rho_1}{1+\epsilon_1^2}+b\rho_1 +i \rho_1 \epsilon_1 b\sigma+\mathcal O(\rho_1^2)\right)\Delta u.
\end{align*} 

    
\end{proof}

  \begin{figure}[h!]
\begin{center}
{\includegraphics[width=.75\textwidth]{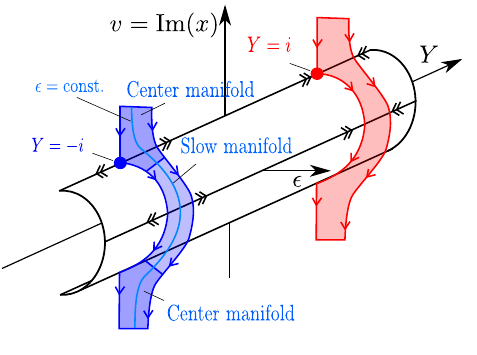}}
\end{center}
\caption{Schematic illustration of the use of blowup to perform the extension of the invariant manifolds used to diagonalize \eqref{DeltayDeltaz}, see \propref{diag}. There are slow manifolds (light blue) on the side of the cylinder (obtained by working in a large but fixed compact sets of the $\breve \epsilon=1$-chart ) and center manifolds (darker blue) on the top and bottom (working in small compact sets of the charts $\breve v=\pm 1$, respectively). In particular, we obtain the global invariant manifold (in blue) by fixing a center manifold on the bottom and then extend this manifold up the cylinder by using the backward flow (where the manifolds become attracting). The invariant manifolds near $Y=-i$ (blue) and $Y=i$ (red) are related by symmetry $(v,Y)\mapsto (-v,\overline Y)$. }
\figlab{center_extension}
\end{figure}

\subsection{Computing the difference}
We will now determine a formula  for the difference $(\Delta y,\Delta z)$ at $x=0$ as $\epsilon\rightarrow 0$ (recall \eqref{asymp}). For this purpose, we will use \eqref{diagonalfinal} and the following result regarding the unperturbed manifolds:
\begin{lemma}
    Suppose that the unperturbed manifolds $(y,z)=\psi^{\pm}_0(x)$, $x\in S^{\pm}$, recall \lemmaref{m0pm}, are non-analytic at the origin (i.e. the series \eqref{gevrey1series} is divergent for any $x\ne 0$). Then
\begin{align} \eqlab{m0nonzero}
\psi^+_0(iv)\ne \psi_0^-(iv)\quad \forall\, v\ne 0.
\end{align}
\end{lemma}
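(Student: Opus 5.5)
The plan is to argue by contradiction, combining the linear equation for the difference of two invariant graphs with the symmetry and uniqueness properties of the $1$-sums. Set $\Delta\psi(x):=\psi_0^+(x)-\psi_0^-(x)$ on $S^+\cap S^-$. Since $0<\chi<\pi$, this intersection has two components, $\Sigma_\pm$, which are sectors around the positive and negative imaginary axis, each of opening $\chi$. On each component, $\Delta\psi$ satisfies the $\epsilon=0$ version of \eqref{DeltayDeltaz}:
\[
\frac{d\Delta\psi}{dx}=A(x,0)\Delta\psi, \qquad A(x,0)=x^{-2}\bigl(\Omega+\mathcal O(x)\bigr).
\]
The coefficient matrix is obtained, as in \eqref{DeltayDeltaz}, by the mean value theorem applied to $J(x,\cdot,0)$ along the two graphs. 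This is a linear ODE that is regular for $x\neq 0$. By uniqueness of solutions, if $\Delta\psi(iv_0)=0$ for a single $v_0\neq0$, then $\Delta\psi\equiv 0$ on the whole component containing $iv_0$. So it suffices to show that $\Delta\psi$ vanishes identically on neither $\Sigma_+$ nor $\Sigma_-$.

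Suppose, to the contrary, that $\Delta\psi\equiv0$ on $\Sigma_+$, the component containing $i(0,\delta)$. The next step is to use real-analyticity. By \lemmaref{m0pm}, $\psi_0^\pm$ are real-analytic, so $\psi_0^\pm(\overline x)=\overline{\psi_0^\pm(x)}$; this holds because $S^\pm$ are symmetric under conjugation and the $1$-sums are unique. Conjugation maps $\Sigma_+$ onto $\Sigma_-$, so $\Delta\psi\equiv0$ on $\Sigma_-$ as well. Then $\psi_0^+$ and $\psi_0^-$ glue along both overlaps into a single analytic function on the punctured disc $0<|x|<\delta$. This function is bounded near $0$ (indeed $\mathcal O(x^3)$), so by Riemann's removable singularity theorem it extends analytically to $x=0$. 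Its Taylor series at $0$ must coincide with the common asymptotic expansion $\sum\psi_{0,\alpha}x^\alpha$ in \eqref{gevrey1series}. That series would then converge, contradicting the hypothesis that it is divergent. Hence $\Delta\psi\not\equiv0$ on each of $\Sigma_\pm$, and by the uniqueness argument above, $\Delta\psi(iv)\neq0$ for every $v\neq0$ with $|v|<\delta$.

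The main obstacle is making the first step rigorous. One must check that $A(x,0)$ is well defined and analytic on $\Sigma_\pm$, i.e. that the denominators $x^2+a(y_*^2+z_*^2)+F$ do not vanish there. This follows from $y_*,z_*=\mathcal O(x^3)$ and $F=\mathcal O(x^3)$, provided $\delta$ is small enough. Alternatively, the mean-value matrix can be avoided entirely: both graphs pass through the same point $(iv_0,\psi_0^\pm(iv_0))$, and invariant graphs over $x$ are solution curves of the regular analytic ODE $d(y,z)/dx=J$ for $x\neq0$. Uniqueness for this nonlinear ODE then gives $\psi_0^+=\psi_0^-$ on the whole connected component directly, and I would present the argument in this simpler form.
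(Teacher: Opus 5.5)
Your proposal is correct and follows the paper's argument. Equality of $\psi_0^+$ and $\psi_0^-$ at one point $iv_0$ propagates by uniqueness for the ODE $d(y,z)/dx=J(x,y,z,0)$, which is regular for $x\neq 0$. The glued bounded function on the punctured disc then extends analytically to $x=0$ by Riemann's removable singularity theorem, which contradicts divergence of the Gevrey-$1$ series.

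Your additional step uses $\psi_0^\pm(\overline{x})=\overline{\psi_0^\pm(x)}$ to carry the identity from the component of $S^+\cap S^-$ around the positive imaginary axis to the one around the negative imaginary axis. This makes explicit what the paper skips when it passes directly from uniqueness to equality on all of $B_\xi$. The step is genuinely needed, since agreement on one component alone would not yield a single-valued function on the punctured disc.
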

\begin{proof}
The proof is by contradiction. First, we note that
$\psi^+_0$ and $\psi^-_0$ satisfy the same differential equation on $x\in S^+\cap S^-$, recall \lemmaref{m0pm}.
This equation is clearly regular away from $x\ne 0$. Therefore if $\psi^+_0(x)=\psi^-_0(x)$ for some $x\in S^+\cap S^-$, $x\ne 0$, then by uniqueness of solutions, we have $\psi^+_0(x)=\psi^-_0(x)$ for all $x\in B_\xi$, i.e. in a full neighborhood of the origin. But then by Riemann's theorem of removable singularities, we conclude that the unperturbed manifolds are analytic at the origin.

\end{proof}
Now, we fix $n\in \mathbb N$ and $\delta>0$ small enough, and suppose that the unperturbed manifolds are non-analytic at the origin. It then follows that
\begin{align*}
   \Theta(\epsilon):&= 
  \begin{pmatrix}
 Y^s(\delta ,\epsilon) & \overline{Y^s}(-\delta,\epsilon)\\
 1 & 1 \end{pmatrix}^{-1}  \left(m^+(i\delta,\epsilon)-m^-(i\delta,\epsilon)\right),
\end{align*}
is $C^n$-smooth with respect to $\epsilon\in [0,\epsilon_0)$, and importantly:
\begin{align}\eqlab{Theta0}\Theta(0)\ne (0,0)\in \mathbb C^2,\end{align} by \eqref{m1eps0} and \eqref{m0nonzero}. Here $Y^s=Y^s(v,\epsilon)$, $v\in [-\delta,\delta]$, $\epsilon\in [0,\epsilon_0)$, is the $C^n$-smooth function from \propref{diag}.

Let $(\Theta_s,\Theta_u)$ denote the components of $\Theta\in \mathbb C^2$. By \eqref{Theta0}, it follows that either $\Theta_s(0)\ne 0$ or $\Theta_u(0)\ne 0$. Suppose the latter and consider the solution $\Delta s=\Delta s(v)$ of the $\Delta s$-equation:
\begin{align}\eqlab{ueqn}
\frac{d\Delta s}{dv} = \frac{1}{-v^2-\epsilon^2}\left(-1+\frac{\FF i v^3}{v^2+\epsilon^2}+b v+i\epsilon b\sigma  +\mathcal O(\rho^2)\right)\Delta s,
\end{align}
with $\Delta s(-\delta)=\overline{\Theta_u}$ as the initial condition.
Since the system is linear, we can integrate the equation directly and if $\Theta_u(0)\ne 0$ then we easily conclude that $\Delta s(0)$ is exponentially large with respect to $\epsilon\rightarrow 0$ (see also \secref{completing} below). But this contradicts \propref{unstable2} and the boundedness of $m_2^{\pm}(0,\epsilon)$. We therefore conclude that $\Theta_s(0)\ne 0$ and instead use $\Delta s(\delta)=\Theta_s$ as an initial condition for \eqref{ueqn}.
We obtain 
\begin{align}\eqlab{Deltauexp}
    \Delta s (0) = \e^{\int_{\delta}^0 \frac{1}{-v^2-\epsilon^2}\left(-1+\frac{\FF i v^3}{v^2+\epsilon^2}+b v+i\epsilon b\sigma  +\mathcal O(\rho^2)\right) dv } \Theta_s.
\end{align}
To determine $\Delta u(0)$ we use the symmetry: $\Delta u(v)=\overline{\Delta s}(-v)$ for all $v\in [-\delta,0]$. In particular, $\Delta u(0)=\overline {\Delta s}(0)$. We are now ready to complete the proof of the main result, \thmref{main}.

\subsection{Completing the proof of \thmref{main}}\seclab{completing}
  We start by analyzing the integral in   \eqref{Deltauexp}:
\begin{align*}
    \int_{\delta}^0 \frac{1}{-v^2-\epsilon^2}\left(-1+\frac{\FF i v^3}{v^2+\epsilon^2}+b v+i\epsilon b\sigma  +\mathcal O(\rho^2)\right) dv.
\end{align*}
We expand the integrand and consider the separate terms successively. First, we directly obtain that
\begin{align}
    \int_{\delta}^0 \frac{1}{v^2+\epsilon^2}dv = -\epsilon^{-1} \arctan \frac{\delta}{\epsilon} = -\frac{\pi}{2}\epsilon^{-1} +\mathcal O(1),\eqlab{int0}
\end{align}
since 
\begin{align*}
    \arctan s= \frac{\pi}{2}+\mathcal O(s^{-1})\quad \mbox{for $s\rightarrow \infty$};
\end{align*}
  recall that $\delta>0$ is fixed. Next, we similarly have  that
  \begin{align*}
      \int_{\delta}^0 \frac{-\FF i v^3}{(v^2+\epsilon^2)^2}dv &=-\frac12 \FF i \left[ \log (v^2+\epsilon^2)+\frac{\epsilon^2}{\epsilon^2+p^2} \right]_{v=\delta}^0\\
      &=-\frac12 \FF i \left(\log \frac{\epsilon^2}{\delta^2+\epsilon^2}+1-\frac{\epsilon^2}{\epsilon^2+\delta^2}\right)\\
      &= \FF i  \log \epsilon^{-1} +\mathcal O(1),
  \end{align*}
  and 
  \begin{align}
    \int_{\delta}^0 \frac{-bv}{v^2+\epsilon^2}dv &=-\frac12 b\left[ \log \frac{\epsilon^2}{v^2+\epsilon^2}\right]_{v=\delta}^0\\
    &=b \log \epsilon^{-1}  +\mathcal O(1),\nonumber\\
    \int_{\delta}^0 \frac{-i \epsilon b\sigma }{v^2+\epsilon^2}dv&=\frac{i \pi b \sigma}{2}+\mathcal O(\epsilon),\eqlab{pis}
\end{align}
using \eqref{int0} in the last integral. It remains to consider the integral
\begin{align*}
    \int_{\delta}^0 \frac{\mathcal O(\rho^2)}{-v^2-\epsilon^2} dv,
\end{align*}
which we analyze by working in the coordinates $(\rho_1,\epsilon_1)$ and $(v_2,\epsilon)$ associated with the blowup \eqref{blowupfinal}. In particular, we split the integral
\begin{align*}
    \int_{\delta}^0  \frac{\mathcal O(\rho^2)}{-v^2-\epsilon^2} dv &=\left(\int_{\delta}^{\xi^{-1}\epsilon}+\int_{\xi^{-1}\epsilon}^0\right)  \frac{\mathcal O(\rho^2)}{-v^2-\epsilon^2} dv \\
    &=\int_{\delta}^{\xi^{-1} \epsilon}  \frac{\mathcal O(1)}{1+(\epsilon \rho_1^{-1})^2} d\rho_1+\int_{\xi^{-1}}^0 \frac{\mathcal O(\epsilon )}{-v_2^2-1} dv_2\\
    &=\mathcal O(1),
\end{align*}
with $\xi>0$ fixed small enough,
using the substitutions $v=\rho_1$ and $v=\epsilon v_2$ in the second equality. Here we have also used that under these substitutions $\mathcal O(\rho^2)$ becomes $\mathcal O(\rho_1^2)$ and $\mathcal O(\epsilon^2)$, respectively. It is easy to see that all $\mathcal O(1)$ terms are $C^n$-smooth with respect to $\epsilon$. In conclusion, we have
\begin{align*}
    \Delta s (0) &= \e^{-\frac{\pi}{2}\epsilon^{-1} +\FF i  \log \epsilon^{-1}  +b \log \epsilon^{-1}  +\mathcal O(1)}\\
    &=\epsilon^{-b}\e^{-\frac{\pi}{2}\epsilon^{-1}+\FF i  \log \epsilon^{-1}+\mathcal O(1)},
\end{align*}
where $\mathcal O(1)\in \mathbb C$ is $C^n$-smooth with respect to $\epsilon$. Here we have used that $\Theta_s(0)\ne 0$. Then by \eqref{Deltayzuv}, we have
       \begin{align*}
 \begin{pmatrix}
  \Delta y\\
  \Delta z
 \end{pmatrix}(0)  &= \begin{pmatrix}
 Y^s(0,\epsilon) & \overline{Y^s}(-0,\epsilon)\\
 1 & 1 \end{pmatrix}\begin{pmatrix}
 \Delta s(0)\\
 \overline{\Delta s}(0)
 \end{pmatrix}\\
 &=2\operatorname{Re}\left(\begin{pmatrix}
 Y^s(0,\epsilon) \\
 1 \end{pmatrix}
 \Delta s(0)\right),
 \end{align*}
 with $Y^s(0,\epsilon) = -i+\mathcal O(\epsilon)$, recall \eqref{slow}.
 A simple computation then shows \eqref{asymp}. Notice, from the construction above, that the functions $\phi_\alpha$, $\alpha\in \{0,1\}$, in \eqref{asymp} are apriori only $C^n$.  However, the manifolds are unique and smooth for $\epsilon>0$ (in particular, they are independent of our choice of $n$ and $\delta>0$). Therefore by the arbitrariness of $n\in \mathbb N$, we conclude that $\phi_\alpha$, $\alpha\in \{0,1\}$, are indeed $C^\infty$ with respect $\epsilon\in [0,\epsilon_0)$.

\subsection*{Acknowledgement}
The author is grateful to Inmaculada Baldom\'a and Tere M. Seara at the Polytechnic University of Catalonia, Barcelona, for the hospitality shown during two research visits in late 2024 and summer 2025. I am thankful for their kindness, interest and guidance. The author would also like to thank his close collaborators, Sam Jelbart and Peter Szmolyan, for the discussions we had during the early stages of this project in the early summer of 2024. Finally, the author thanks Sam Jelbart and Johan M. Christensen for feedback on an earlier version. The author was funded by Danish Research Council (DFF) grant 4283-00014B.

\newpage 
\bibliography{refs}

@article {ksum,
    AUTHOR = {De Maesschalck, P. and Kristiansen, K.~U.},
     TITLE = {On {$k$}-summable normal forms of vector fields with one zero
              eigenvalue},
   JOURNAL = {Qual. Theory Dyn. Syst.},
  FJOURNAL = {Qualitative Theory of Dynamical Systems},
    VOLUME = {25},
      YEAR = {2026},
    NUMBER = {1},
     PAGES = {4, 33},
      ISSN = {1575-5460,1662-3592},
   MRCLASS = {34C23 (34C45 37G05 37G10)},
  MRNUMBER = {5001492},
       DOI = {10.1007/s12346-025-01429-1},
       URL = {https://doi-org.proxy.findit.cvt.dk/10.1007/s12346-025-01429-1},
}

@article{constantine1996a,
  author = {Constantine, G. M. and Savits, T. H.},
  title = "{A multivariate Faa di Bruno formula with applications}",
  language = {eng},
  format = {article},
  journal = {Transactions of the American Mathematical Society},
  volume = {348},
  number = {2},
  pages = {503-520},
  year = {1996},
  issn = {00029947, 10886850},
  publisher = {American Mathematical Society},
  doi = {10.1090/s0002-9947-96-01501-2}
}

@article {MR4455359,
    AUTHOR = {Baldom\'a, I. and Giralt, M. and Guardia, M.},
     TITLE = {Breakdown of homoclinic orbits to {$L_3$} in the {RPC}3{BP}({I}). {C}omplex singularities and the inner equation},
   JOURNAL = {Adv. Math.},
  FJOURNAL = {Advances in Mathematics},
    VOLUME = {408},
      YEAR = {2022},
     PAGES = {Paper No. 108562, 64},
      ISSN = {0001-8708,1090-2082},
   MRCLASS = {37J46 (70F07 70F15)},
  MRNUMBER = {4455359},
MRREVIEWER = {Mingliang\ Song},
       DOI = {10.1016/j.aim.2022.108562},
       URL = {https://doi-org.proxy.findit.cvt.dk/10.1016/j.aim.2022.108562},
}

@article {MR4621957,
    AUTHOR = {Baldom\'a, I. and Giralt, M. and Guardia, M.},
     TITLE = {Breakdown of homoclinic orbits to {$L_3$} in the {RPC}3{BP}({II}). {A}n asymptotic formula},
   JOURNAL = {Adv. Math.},
  FJOURNAL = {Advances in Mathematics},
    VOLUME = {430},
      YEAR = {2023},
     PAGES = {Paper No. 109218, 72},
      ISSN = {0001-8708,1090-2082},
   MRCLASS = {37J46 (70F07 70F15)},
  MRNUMBER = {4621957},
MRREVIEWER = {Mingliang\ Song},
       DOI = {10.1016/j.aim.2023.109218},
       URL = {https://doi-org.proxy.findit.cvt.dk/10.1016/j.aim.2023.109218},
}

@article {MR4445442,
    AUTHOR = {Merle, F. and Rapha\"{e}l, P. and Rodnianski, I. and
              Szeftel, J.},
     TITLE = "{On the implosion of a compressible fluid {I}: {S}mooth
              self-similar inviscid profiles}",
   JOURNAL = {Ann. of Math. (2)},
  FJOURNAL = {Ann. of Math.. Second Series},
    VOLUME = {196},
      YEAR = {2022},
    NUMBER = {2},
     PAGES = {567--778},
      ISSN = {0003-486X,1939-8980},
   MRCLASS = {35Q35 (34C37)},
  MRNUMBER = {4445442},
MRREVIEWER = {Wei\ Lian},
       DOI = {10.4007/annals.2022.196.2.3},
       URL = {https://doi.org/10.4007/annals.2022.196.2.3},
}

@article{lazutkin2005a,
  author = {Lazutkin, V. F.},
  title = {Splitting of separatrices for the Chirikov standard map},
  language = {eng},
  format = {article},
  journal = {Journal of Mathematical Sciences},
  volume = {128},
  number = {2},
  pages = {2687-2705},
  year = {2005},
  issn = {15738795, 10723374},
  publisher = {Kluwer Academic Publishers-Consultants Bureau},
  doi = {10.1007/s10958-005-0219-7}
}

@article{gelfreich1999a,
  author = {Gelfreich, V. G.},
  title = {A proof of the exponentially small transversality of the separatrices for the standard map},
  language = {eng},
  format = {article},
  journal = {Communications in Mathematical Physics},
  volume = {201},
  number = {1},
  pages = {155-216},
  year = {1999},
  issn = {14320916, 00103616},
  publisher = {Springer New York},
  doi = {10.1007/s002200050553}
}

@article{gaivao2011a,
  author = {Gaiv{\~a}o, J.~P. and Gelfreich, V.},
  title = {Splitting of separatrices for the Hamiltonian-Hopf bifurcation with the Swift-Hohenberg equation as an example},
  language = {eng},
  format = {article},
  journal = {Nonlinearity},
  volume = {24},
  number = {3},
  pages = {677-698},
  year = {2011},
  issn = {13616544, 09517715},
  publisher = {IOP Pub.},
  doi = {10.1088/0951-7715/24/3/002}
}

@article{bonckaert2008a,
  author = {Bonckaert, P. and De Maesschalck, P.},
  title = {Gevrey normal forms of vector fields with one zero eigenvalue},
  language = {eng},
  format = {article},
  journal = {Journal of Mathematical Analysis and Applications},
  volume = {344},
  number = {1},
  pages = {301-321},
  year = {2008},
  issn = {10960813, 0022247x},
  publisher = {ACADEMIC PRESS INC ELSEVIER SCIENCE},
  doi = {10.1016/j.jmaa.2008.02.060}
}

@article{new,
  author = {Kristiansen, K.~U.},
  title = "{A geometric approach to exponentially small splitting: Zero-Hopf bifurcations of arbitrary co-dimension}",
  language = {eng},
  format = {article},
  journal = {arXiv-preprint::2603.12115},
    year = {2026}
}

@book{coppel1978a,
  author = {Coppel, W.~A.},
  title = {Dichotomies in Stability Theory},
  language = {eng},
  format = {book},
  journal = {Dichotomies in Stability Theory},
  volume = {629},
  pages = {Online-Ressource (unknown)},
  year = {1978},
  isbn = {354008536X, 354008536x, 3540359761, 9783540085362, 9783540359760, 038708536x, 9780387085364},
  publisher = {Springer}
}

@article{martinet1983a,
  author = {Martinet, J. and Ramis, J.-P.},
  title = {Classification analytique des \'equations diff\'erentielles non lin\'eaires r\'esonnantes du premier ordre},
  language = {und},
  format = {article},
  journal = {Annales Scientifiques De L'\'ecole Normale Sup\'erieure},
  volume = {16},
  number = {4},
  pages = {571-621},
  year = {1983},
  issn = {18732151, 00129593},
  doi = {10.24033/asens.1462}
}

@article {sauzin2015,
    AUTHOR = {Sauzin, D.},
     TITLE = {Nonlinear analysis with resurgent functions},
   JOURNAL = {Ann. Sci. \'{E}c. Norm. Sup\'{e}r. (4)},
  FJOURNAL = {Annales Scientifiques de l'\'{E}cole Normale Sup\'{e}rieure. Quatri\`eme
              S\'{e}rie},
    VOLUME = {48},
      YEAR = {2015},
    NUMBER = {3},
     PAGES = {667--702},
      ISSN = {0012-9593},
   MRCLASS = {30B40},
  MRNUMBER = {3377056},
MRREVIEWER = {Marek Jarnicki},
       DOI = {10.24033/asens.2255},
       URL = {https://doi.org/10.24033/asens.2255},
}

@book {costin2009,
    AUTHOR = {Costin, O.},
     TITLE = {Asymptotics and {B}orel summability},
    SERIES = {Chapman \& Hall/CRC Monographs and Surveys in Pure and Applied
              Mathematics},
    VOLUME = {141},
 PUBLISHER = {CRC Press, Boca Raton, FL},
      YEAR = {2009},
     PAGES = {xiv+250},
      ISBN = {978-1-4200-7031-6},
   MRCLASS = {34-02 (34M30 34M37 34M40 34M55 39A11 40G10)},
  MRNUMBER = {2474083},
MRREVIEWER = {B. L. J. Braaksma},
}

@article{kristiansen2025a,
  author = {Kristiansen, K.~U.},
  title = {Improved Gevrey-1 estimates of formal series expansions of center manifolds},
  language = {eng},
  format = {article},
  journal = {Studies in Applied Mathematics},
  volume = {154},
  number = {6},
  pages = {e70063},
  year = {2025},
  issn = {14679590, 00222526},
  publisher = {Wiley},
  doi = {10.1111/sapm.70063}
}

@article{krupa_extending_2001,
	Author = {Krupa, M. and Szmolyan, P.},
	Journal = {{SIAM} Journal on Mathematical Analysis},
	Number = {2},
	Pages = {286--314},
	Title = {Extending geometric singular perturbation theory to nonhyperbolic points - Fold and canard points in two dimensions},
	Url = {http://epubs.siam.org/doi/abs/10.1137/S0036141099360919},
	Urldate = {2014-06-02},
	Volume = {33},
	Year = {2001}}

@article{kruskal1991a,
  author = {Kruskal, M.~D. and Segur, H.},
  title = {ASYMPTOTICS BEYOND ALL ORDERS IN A MODEL OF CRYSTAL-GROWTH},
  language = {eng},
  format = {article},
  journal = {Studies in Applied Mathematics},
  volume = {85},
  number = {2},
  pages = {129-181},
  year = {1991},
  issn = {00222526, 14679590},
  publisher = {BLACKWELL PUBLISHERS},
  doi = {10.1002/sapm1991852129}
}

@book{meiss2007a,
  author = {Meiss, J.~ D.},
  title = {Differential dynamical systems},
  language = {eng},
  format = {book},
  volume = {14},
  pages = {XXI, 412 S. (unknown)},
  year = {2007},
  isbn = {0898716357, 0898718236, 9780898716351, 9780898718232},
  publisher = {Society for Industrial and Applied Mathematics}
}

@book{dumortier2006a,
  title = {Qualitative theory of planar differential systems},
  language = {eng},
  publisher = {Springer Berlin Heidelberg},
  journal = {Qualitative Theory of Planar Differential Systems},
  pages = {1-302},
  year = {2006},
  isbn = {3540329021, 3540328939, 9783540329021, 9783540328933},
  doi = {10.1007/978-3-540-32902-2},
  author = {Dumortier, F. and Llibre, J. and Artes, J. C.}
}

@article{chapman1998a,
  author = {Chapman, S. J. and King, J. R. and Adams, K. L.},
  title = {Exponential asymptotics and Stokes lines in nonlinear ordinary differential equations},
  language = {eng},
  format = {article},
  journal = {Proceedings of the Royal Society A: Mathematical, Physical and Engineering Sciences},
  volume = {454},
  number = {1978},
  pages = {2733-2755},
  year = {1998},
  issn = {14712946, 13645021},
  publisher = {Royal Society},
  doi = {10.1098/rspa.1998.0278}
}

@article {MR4814277,
    AUTHOR = {Jelbart, S. and Kuehn, C.},
     TITLE = {Extending discrete geometric singular perturbation theory to
              non-hyperbolic points},
   JOURNAL = {Nonlinearity},
  FJOURNAL = {Nonlinearity},
    VOLUME = {37},
      YEAR = {2024},
    NUMBER = {10},
     PAGES = {105006, 49},
      ISSN = {0951-7715,1361-6544},
   MRCLASS = {37C05 (34D15 37C10 37G10 39A05)},
  MRNUMBER = {4814277},
       DOI = {10.1088/1361-6544/ad72c5},
       URL = {https://doi-org.proxy.findit.cvt.dk/10.1088/1361-6544/ad72c5},
}

@article{dumortier_1996,
	Author = {Dumortier, F. and Roussarie, R.},
	Issue = {557},
	Journal = {Mem. Amer. Math. Soc.},
	Pages = {1-96},
	Title = {Canard cycles and center manifolds},
	Volume = {121},
	Year = {1996}}

@article{uldall2024a,
  author = {Kristiansen, K. U. and Szmolyan, P.},
  title = "{A dynamical systems approach to WKB-methods: The simple turning point}",
  language = {eng},
  format = {article},
  journal = {Journal of Differential Equations},
  volume = {406},
  pages = {202-254},
  year = {2024},
  issn = {10902732, 00220396},
  doi = {10.1016/j.jde.2024.06.006}
}

@article{takens1973a,
  author = {Takens, F.},
  title = {NORMAL FORMS FOR CERTAIN SINGULARITIES OF VECTORFIELDS},
  language = {eng},
  format = {article},
  journal = {Annales De L Institut Fourier},
  volume = {23},
  number = {2},
  pages = {163-195},
  year = {1973},
  issn = {17775310, 03730956},
  publisher = {ANNALES DE L INSTITUT FOURIER},
  doi = {10.5802/aif.467}
}

@article{glendinning2022a,
  author = {Glendinning, P. A. and Simpson, D. J.W.},
  title = {Normal forms for saddle-node bifurcations: Takens' coefficient and applications in climate models},
  language = {eng},
  format = {article},
  journal = {Proceedings of the Royal Society A: Mathematical, Physical and Engineering Sciences},
  volume = {478},
  number = {2267},
  pages = {20220548},
  year = {2022},
  issn = {14712946, 13645021},
  publisher = {Royal Society Publishing},
  doi = {10.1098/rspa.2022.0548}
}

@article{fiedler2025a,
  author = {Fiedler, B.},
  title = {Scalar Polynomial Vector Fields in Real and Complex Time},
  language = {eng},
  format = {article},
  journal = {Regular and Chaotic Dynamics},
  volume = {30},
  number = {2},
  pages = {188-225},
  year = {2025},
  issn = {14684845, 15603547},
  publisher = {Hybrid},
  doi = {10.1134/S1560354725020030}
}

@book{Guckenheimer97,
	Author = {J. Guckenheimer and P. Holmes},
	Edition = {5th},
	Publisher = {Springer Verlag},
	Title = {Nonlinear Oscillations, Dynamical Systems and Bifurcations of Vector Fields},
	Year = {1997}}

@article{baldom2013a,
  author = {Baldom\'a, I. and Castej\'on, O. and Seara, T. M.},
  title = {Exponentially Small Heteroclinic Breakdown in the Generic Hopf-Zero Singularity},
  language = {eng},
  format = {article},
  journal = {Journal of Dynamics and Differential Equations},
  volume = {25},
  number = {2},
  pages = {335-392},
  year = {2013},
  issn = {10407294, 15729222},
  publisher = {SPRINGER},
  doi = {10.1007/s10884-013-9297-2}
}

@article{berry1988a,
  author = {Berry, M.~V.},
  title = {STOKES PHENOMENON - SMOOTHING A VICTORIAN DISCONTINUITY},
  language = {eng},
  format = {article},
  journal = {Publications Mathematiques},
  volume = {68},
  number = {68},
  pages = {211-221},
  year = {1988},
  issn = {16181913, 00738301},
  publisher = {Springer International Publishing},
  doi = {10.1007/BF02698550}
}

@article{6a948bd44bf4469e875c1f1482ef7619,
title = "Exponential asymptotics for elastic and elastic-gravity waves on flow past submerged obstacles",
author = "Lustri, {C.~J.}",
year = "2022",
month = nov,
day = "10",
doi = "10.1017/jfm.2022.806",
language = "English",
volume = "950",
pages = "A6--1--A6--39",
journal = "Journal of Fluid Mechanics",
issn = "0022-1120",
publisher = "Cambridge University Press (CUP)",
}

@article {MR4743478,
    AUTHOR = {Baldom\'a, I. and Seara, T.~M. and Moreno, R.},
     TITLE = {Splitting of separatrices for rapid degenerate perturbations
              of the classical pendulum},
   JOURNAL = {SIAM J. Appl. Dyn. Syst.},
  FJOURNAL = {SIAM Journal on Applied Dynamical Systems},
    VOLUME = {23},
      YEAR = {2024},
    NUMBER = {2},
     PAGES = {1159--1198},
      ISSN = {1536-0040},
   MRCLASS = {37J40 (37D10)},
  MRNUMBER = {4743478},
MRREVIEWER = {Zhengdong\ Du},
       DOI = {10.1137/23M1550992},
       URL = {https://doi-org.proxy.findit.cvt.dk/10.1137/23M1550992},
}

@article {MR4940205,
    AUTHOR = {Baldom\'a, I. and Guardia, M. and Pelinovsky,
              D.~E.},
     TITLE = {On a countable sequence of homoclinic orbits arising near a
              saddle-center point},
   JOURNAL = {Comm. Math. Phys.},
  FJOURNAL = {Communications in Mathematical Physics},
    VOLUME = {406},
      YEAR = {2025},
    NUMBER = {9},
     PAGES = {215, 65},
      ISSN = {0010-3616,1432-0916},
   MRCLASS = {37C29 (35B25 37G20)},
  MRNUMBER = {4940205},
       DOI = {10.1007/s00220-025-05381-8},
       URL = {https://doi-org.proxy.findit.cvt.dk/10.1007/s00220-025-05381-8},
}

@article {MR4892796,
    AUTHOR = {Gomide, O.~M.~L. and Guardia, M. and Seara, T.~M.
              and Zeng, C.},
     TITLE = {On small breathers of nonlinear {K}lein-{G}ordon equations via
              exponentially small homoclinic splitting},
   JOURNAL = {Invent. Math.},
  FJOURNAL = {Inventiones Mathematicae},
    VOLUME = {240},
      YEAR = {2025},
    NUMBER = {2},
     PAGES = {661--777},
      ISSN = {0020-9910,1432-1297},
   MRCLASS = {35L71 (35B10)},
  MRNUMBER = {4892796},
MRREVIEWER = {Satyanad\ Kichenassamy},
       DOI = {10.1007/s00222-025-01327-y},
       URL = {https://doi-org.proxy.findit.cvt.dk/10.1007/s00222-025-01327-y},
}

@article {MR779710,
    AUTHOR = {Broer, H. W. and Vegter, G.},
     TITLE = "{Subordinate Silnikov bifurcations near some
              singularities of vector fields having low codimension}",
   JOURNAL = {Ergodic Theory Dynam. Systems},
  FJOURNAL = {Ergodic Theory and Dynamical Systems},
    VOLUME = {4},
      YEAR = {1984},
    NUMBER = {4},
     PAGES = {509--525},
      ISSN = {0143-3857,1469-4417},
   MRCLASS = {58F14 (58F13)},
  MRNUMBER = {779710},
MRREVIEWER = {C.\ Tresser},
       DOI = {10.1017/S0143385700002613},
       URL = {https://doi-org.proxy.findit.cvt.dk/10.1017/S0143385700002613},
}

@article{baldom2019a,
  author = {Baldom\'a, I. and Ib\'a\~nez, S. and Seara, T.M.},
  title = "{Hopf-zero singularities truly unfold chaos}",
  language = {eng},
  format = {article},
  journal = {Communications in Nonlinear Science and Numerical Simulation},
  volume = {84},
  pages = {105162},
  year = {2019},
  issn = {18787274, 10075704},
  publisher = {Elsevier B.V.},
  doi = {https://doi.org/10.1016/j.cnsns.2019.105162}
}

@article {MR4855745,
    AUTHOR = {Kristiansen, K.~U. and Szmolyan, P.},
     TITLE = {Analytic weak-stable manifolds in unfoldings of saddle-nodes},
   JOURNAL = {Nonlinearity},
  FJOURNAL = {Nonlinearity},
    VOLUME = {38},
      YEAR = {2025},
    NUMBER = {2},
     PAGES = {025019, 70},
      ISSN = {0951-7715,1361-6544},
   MRCLASS = {34C45 (34D35 37G05 37G10)},
  MRNUMBER = {4855745},
MRREVIEWER = {Patrick\ Bonckaert},
       DOI = {10.1088/1361-6544/ada67a},
       URL = {https://doi-org.proxy.findit.cvt.dk/10.1088/1361-6544/ada67a},
}

@article{CHAPMAN2009319,
title = {Exponential asymptotics of localised patterns and snaking bifurcation diagrams},
journal = {Physica D: Nonlinear Phenomena},
volume = {238},
number = {3},
pages = {319-354},
year = {2009},
issn = {0167-2789},
doi = {https://doi.org/10.1016/j.physd.2008.10.005},
url = {https://www.sciencedirect.com/science/article/pii/S0167278908003679},
author = {Chapman, S.~J. and Kozyreff, G.}
}

@article{neishtadt1987a,
  author = {Neishtadt, A.~I.},
  title = {PERSISTENCE OF STABILITY LOSS FOR DYNAMICAL BIFURCATIONS .1},
  language = {eng},
  format = {article},
  journal = {Differential Equations},
  volume = {23},
  number = {12},
  pages = {1385-1391},
  year = {1987},
  issn = {16083083, 00122661},
  publisher = {PLENUM PUBL CORP}
}

@article{neishtadt1988a,
  author = {Neishtadt, A.~I.},
  title = {PERSISTENCE OF STABILITY LOSS FOR DYNAMICAL BIFURCATIONS .2},
  language = {eng},
  format = {article},
  journal = {Differential Equations},
  volume = {24},
  number = {2},
  pages = {171-176},
  year = {1988},
  issn = {00122661},
  publisher = {PLENUM PUBL CORP}
}

@article{rousseau2005a,
  author = {Rousseau, C.},
  title = {MODULUS OF ORBITAL ANALYTIC CLASSIFICATION FOR A FAMILY UNFOLDING A SADDLE-NODE},
  language = {eng},
  format = {article},
  journal = {Moscow Mathematical Journal},
  volume = {5},
  number = {1},
  pages = {245-268},
  year = {2005},
  issn = {16094514, 16093321},
  publisher = {INDEPENDENT UNIV MOSCOW-IUM},
  doi = {10.17323/1609-4514-2005-5-1-245-268}
}

@article{hayes2016a,
  title = {Geometric desingularization of degenerate singularities in the presence of fast rotation: A new proof of known results for slow passage through Hopf bifurcations},
  language = {eng},
  publisher = {Elsevier B.V.},
  journal = {Indagationes Mathematicae},
  volume = {27},
  number = {5},
  pages = {1184-1203},
  year = {2016},
  issn = {18726100, 00193577},
  doi = {10.1016/j.indag.2015.11.005},
  author = {Hayes, M. G. and Kaper, T. J. and Szmolyan, P. and Wechselberger, M.}
}

@book{jones_1995,
	Author = {Jones, C. K. R. T. },
	Publisher = {Springer, Berlin},
	Title = {Geometric Singular Perturbation Theory, Lecture Notes in Mathematics, Dynamical Systems (Montecatini Terme)},
	Year = {1995}}

@article{fen3,
	Author = {Fenichel, N.},
	Journal = {J. Diff. Eq.},
	Pages = {53--98},
	Title = {Geometric singular perturbation theory for ordinary differential equations},
	Volume = {31},
	Year = {1979}}

@article{de2020a,
  title = {Gevrey asymptotic properties of slow manifolds},
  language = {eng},
  publisher = {IOP PUBLISHING LTD},
  journal = {Nonlinearity},
  volume = {33},
  number = {1},
  pages = {341-387},
  year = {2020},
  issn = {13616544, 09517715},
  doi = {10.1088/1361-6544/ab4d86},
  author = {De Maesschalck, P. and Kenens, K.}
}
\bibliographystyle{plain}

\newpage 

\appendix 
 \section{Derivation of the system \eqref{model0}}\applab{model}
 In \cite{baldom2013a}, the authors present the following co-dimension two unfolding of the real-analytic zero-Hopf bifurcation:
\begin{equation}\eqlab{modelapp0}
\begin{aligned}
 x' &=x^2-\mu +\gamma_3 \mu^2 + \gamma_4 \nu^2 + \gamma_5 \mu \nu+\gamma_2 (y^2 +z^2) + F(x,y,z,\mu,\nu),\\
 y'&= (\nu-\beta_1 x ) y +  z(\alpha_0+\alpha_1\nu + \alpha_2 \mu+\alpha_3 x)+G(x,y,z,\mu,\nu),\\
 z'&= (\nu-\beta_1 x) z -  y(\alpha_0+\alpha_1\nu + \alpha_2 \mu+\alpha_3 x)+H(x,y,z,\mu,\nu),
\end{aligned}
\end{equation}
 see \cite[Eq. (6)]{baldom2013a} with $(\overline x,\overline y,\overline z)=:(y,z,x)$, where $\alpha_0\ne 0$. Here $\mu$ and $\nu$ are the local unfolding parameters, with $(\mu,\nu)=(0,0)$ corresponding to the zero-Hopf bifurcation, the linearization having eigenvalues $0$, $\pm i\alpha_0^2$, and $F$, $G$, $H$ are each real-analytic and cubic with respect $(x,y,z,\mu,\nu)$.
  Since $\alpha_0\ne 0$, we can divide the right hand side by
 \begin{align*}
  \alpha_0 +\alpha_1 \nu +\alpha_2 \mu+\alpha_3 x\ne 0
 \end{align*}
for all $x,\mu,\nu$ sufficiently close to $0$.
This corresponds to a regular transformation of time.
%
Then through elementary transformations of $(x,\mu,\nu)$, we can bring the system into the following form
\begin{equation}\eqlab{modelapp1}
\begin{aligned}
 x' &=x^2-\mu+\gamma_2 (y^2 +z^2) + F(x,y,z,\mu,\nu),\\
 y'&= (\nu-\beta_1 x ) y +  z+G(x,y,z,\mu,\nu),\\
 z'&= (\nu-\beta_1 x) z -  y+H(x,y,z,\mu,\nu),
\end{aligned}
\end{equation}
for some new real-analytic functions $F$, $G$ and $H$ and new parameters $\beta_1$ and $\gamma_2$.
 Finally, we notice that the auxiliary equation
\begin{align*}
 x' &=x^2 -\mu + F(x,0,0,\mu,\nu),
\end{align*}
obtained by setting $y=z=0$ in the $x$-equation, is an equation for an analytic unfolding of a planar saddle-node. Here it is well-known that the $x^3$-term of $F(x,0,0,0,0)$ cannot be removed by normal form transformations, see e.g. \cite{takens1973a}. We consider the expansion 
\begin{align*}
 F(x,0,0,\mu,\nu) = \sum_{\alpha+\beta+\gamma=3} F_{\alpha,\beta,\gamma} x^\alpha \mu^\beta \nu^\gamma + \mathcal O(\vert (x,\mu,\nu)\vert^4),
\end{align*}
with $\mathcal O(\vert (x,\mu,\nu)\vert^4)$ denoting terms that are fourth order with respect to $(x,\mu,\nu)\rightarrow (0,0,0)$.
It is then a simple calculation to show that
\begin{align}\eqlab{tildex}
 x = \widetilde x -\frac12 F_{1,2,0} \mu^2 -\frac12 F_{1,0,2} \nu^2-\frac{1}{2}F_{1,1,1} \mu \nu- F_{2,1,0}\widetilde x \mu-F_{2,0,1} \widetilde x \nu 
\end{align}
which defines a local $(\mu,\nu)$-dependent change of coordinates $\widetilde x\mapsto x$, brings the auxiliary equation into the following form
\begin{align*}
 \widetilde x ' = \widetilde x^2 -\widetilde \mu(\mu,\nu) +F_{3,0,0} x^3+\mathcal O(\vert (x,\mu,\nu)\vert^4),
\end{align*}
with $\widetilde \mu(\mu,\nu) = \mu + \mathcal O(2)$ an analytic function. Therefore by applying the $(\mu,\nu)$-dependent change of coordinates  $\widetilde x\mapsto x$ defined by \eqref{tildex} to the full system \eqref{modelapp1},  and a subsequent diffeomorphic change of parameters $(\mu,\nu)\mapsto (\widetilde \mu,\nu)$, we again obtain \eqref{modelapp1} upon dropping the tildes, but where the new $F$ now satisfies
\begin{align}
 F(x,0,0,\mu,\nu) = F_{3,0,0} x^3+\mathcal O(\vert (x,\mu,\nu)\vert^4).\eqlab{Fexpansionapp}
\end{align}
To complete the derivation of \eqref{model0} satisfying \eqref{Fexpansion}, we simply set
%
%
%
\begin{align*}
 \begin{cases} \gamma_2 = a,\\
 \beta_1 = b,\\
 F_{3,0,0}=\FF.
 \end{cases}
\end{align*}
\begin{remark}
 It is an interesting problem whether $\mathcal O(\vert (x,\mu,\nu)\vert^4)$ in \eqref{Fexpansionapp} can be removed completely within the analytic case. The recent paper \cite{glendinning2022a} studies a similar problem in the smooth topology and seem to obtain partial results of this kind.
\end{remark}


\end{document}